\documentclass[11pt,reqno]{amsart}
\usepackage{amssymb,amsmath,amsfonts,amsthm,enumerate}
\usepackage{mathrsfs}
\usepackage{graphicx}

\usepackage[left=.75 in, right=.75 in,top=.75 in, bottom=.75 in]{geometry}
\usepackage{nameref,hyperref,cleveref}
\numberwithin{equation}{section}
\newtheorem{lem}{Lemma}[section]

\newtheorem{prop}[lem]{Proposition}
\newtheorem{thm}[lem]{Theorem}
\newtheorem{remark}[lem]{Remark}
\newtheorem{definition}[lem]{Definition}

\title{Bore wave solutions to the viscous Saint-Venant system with capillarity}

\author{Keming Meng}
\address{
Department of Mathematical Sciences\\
Carnegie Mellon University\\
Pittsburgh, PA 15213, USA
}
\email[K. Meng]{kemingm@andrew.cmu.edu}

\author{Ian Tice}
\address{
Department of Mathematical Sciences\\
Carnegie Mellon University\\
Pittsburgh, PA 15213, USA
}
\email[I. Tice]{iantice@andrew.cmu.edu}
\thanks{I. Tice was supported by an NSF Grant (DMS \#2508400), which provided summer research funds for K. Meng.}

\subjclass[2020]{Primary 35C07, 34C37, 76D33; Secondary 34E15, 34C45, 47H11}

\keywords{Bore waves, traveling fronts, viscous Saint-Venant, shallow water}

\begin{document}

\begin{abstract} 
We study a traveling wave formulation of the viscous Saint-Venant system in one spatial dimension with capillarity, which can be reduced to an equivalent nonlinear third order ODE.  We construct heteroclinic orbits for small positive capillarity, for arbitrary positive capillarity under an explicit small Froude number condition, and in a small-amplitude regime away from the critical Froude number.  These then give rise to smooth traveling bore wave solutions to the viscous Saint-Venant system.
\end{abstract}

\maketitle

%%%%%%%%%%%%%%%%%%%%%%%%%%%%%%%%%%%%%%%%%%%%%%%%%%%%%%%%%%%%
%%%%%%%%%%%%%%%%%%%%%%%%%%%%%%%%%%%%%%%%%%%%%%%%%%%%%%%%%%%%
%%%%%%%%%%%%%%%%%%%%%%%%%%%%%%%%%%%%%%%%%%%%%%%%%%%%%%%%%%%%
\section{Introduction}
%%%%%%%%%%%%%%%%%%%%%%%%%%%%%%%%%%%%%%%%%%%%%%%%%%%%%%%%%%%%
%%%%%%%%%%%%%%%%%%%%%%%%%%%%%%%%%%%%%%%%%%%%%%%%%%%%%%%%%%%%
%%%%%%%%%%%%%%%%%%%%%%%%%%%%%%%%%%%%%%%%%%%%%%%%%%%%%%%%%%%%

The viscous Saint-Venant system, also known as the viscous shallow water system, is a nonlinear system of PDEs that describes the behavior of an incompressible viscous fluid in a regime in which the fluid's depth is significantly smaller than its characteristic horizontal scale.  It is derived from the free boundary incompressible Navier-Stokes system by way of an expansion in a ``shallowness'' parameter and a vertical averaging argument.  We refer to the surveys of Bresch \cite{Bresch_2009} and Mascia \cite{Mascia2010} for a detailed overview of the derivation, noting that more than one limit is possible depending on the precise modeling assumptions made; see also Marche \cite{Marche_2007}.    The specific system we study in this paper is a nondimensional traveling wave formulation of the system with incline, slip, and capillarity derived by Stevenson and Tice  \cite{Stevenson_Tice_bathymetry,Stevenson_Tice_bores} using the techniques from \cite{Bresch_2009,Mascia2010}:
\begin{equation}\label{vsv_tw_form}
 (4-U) H = A \text{ and } H(U-4)U' + a U -4 \mu (HU')' + g H H' - \sigma H H''' = 4aH,
\end{equation}
where $U : \mathbb{R} \to \mathbb{R}$ is the unknown horizontal velocity and $H : \mathbb{R} \to (0,\infty)$ is the unknown fluid height.  In this setting the traveling wave speed has been normalized to $4$.  The nondimensional parameters are as follows: $A \in (0,1)$ measures the relative horizontal flux in the traveling frame; $a>0$ is the nondimensional Navier slip coefficient, which controls bottom friction; $g \ge 0$ is related to the Froude number of the flow by $\mathrm{Fr}=\sqrt{8/g}$ and should be thought of as a measure of the strength of normal gravity; $\mu>0$ is the reciprocal Reynolds number of the flow; and $\sigma\ge0$ is the nondimensional capillarity coefficient, measuring the strength of surface tension relative to inertia.

Our main interest in this paper is bore waves, which at the level of free boundary Navier-Stokes are two-dimensional traveling wave solutions with distinct upstream and downstream shear flow limits.  With the convention that the waves travel in the positive horizontal direction, we say a bore wave is ebbing if the smaller asymptotic depth occurs at $-\infty$ and the larger at $+\infty$, and surging when this is reversed.  In the context of the viscous Saint-Venant system \eqref{vsv_tw_form} bores correspond to heteroclinic orbits connecting equilibrium states with different positive heights.  Stevenson and Tice \cite{Stevenson_Tice_bores} constructed both ebbing and surging viscous Saint-Venant bores with $\sigma=0$ and parameters $(g,A)$ belonging to an infinite measure subset of $[0,\infty) \times (0,1)$ (see Theorem \ref{main_no_st_bores} for a more precise statement and Figure \ref{parameter_regimes} for a graphical depiction).  These Saint-Venant bore profiles were then used as leading order approximations in a construction of bore wave solutions to free boundary Navier-Stokes in  sufficiently shallow fluid layers.  The paper \cite{Stevenson_Tice_bores} also establishes the convergence of the resulting Navier-Stokes solutions to the viscous Saint-Venant solutions,   thereby rigorously justifying the shallow water limit for these solutions.   Although surface tension is allowed in the Navier-Stokes construction of \cite{Stevenson_Tice_bores}, its contribution is of higher order in the chosen scaling and does not appear in the limiting Saint-Venant system.

Our aim in this paper is to construct bore wave solutions to \eqref{vsv_tw_form} when capillarity is retained by choosing $\sigma >0$.  The first equation in \eqref{vsv_tw_form} allows for the elimination of $U$ via  $U=4 - A/H$, which results in an equivalent third order ODE for $H>0$:
\begin{equation}\label{ss_height_equation}
    \sigma H H''' = 4a\left(1-H-\frac{A}{4H}\right)- 4\mu A\left(\frac{H'}{H}\right)'  - \left(\frac{A^2}{H^2}-gH\right)H',
\end{equation}
which still makes sense with $\sigma=0$, in which case it is second order.  Enforcing the condition $H>0$ suggests writing $H = e^\rho$ and reformulating in terms of the unknown $\rho : \mathbb{R} \to \mathbb{R}$:
\begin{equation}\label{ss_profile_equation}
    \sigma\rho''' = 4\mu A e^{-2\rho}\bigl(F(\rho)-G(\rho)\rho'-\rho''\bigr)
      -\sigma\bigl((\rho')^3+3\rho'\rho''\bigr),
\end{equation}
where $F,G : \mathbb{R} \to \mathbb{R}$ are the smooth functions defined by 
\begin{equation}\label{ss_coefficients}
    F(r)=\frac{a}{\mu A}\left(1-e^r-\frac{A}{4}e^{-r}\right)
    \text{ and }
    G(r)=\frac{1}{4\mu}\left(Ae^{-r}-\frac{g}{A}e^{2r}\right).
\end{equation}
The problem \eqref{ss_profile_equation} admits exactly two equilibrium solutions, $\rho= \rho_\pm$, corresponding to the two zeros of $F$:
\begin{equation}\label{ss_equilibrium_values}
    \rho_-=\log\left(\frac{1-\sqrt{1-A}}{2}\right),
    \text{ and }
    \rho_+=\log\left(\frac{1+\sqrt{1-A}}{2}\right).
\end{equation}
Writing $H_\pm=H_\pm(A)=e^{\rho_\pm}=(1\pm\sqrt{1-A})/2$, we see that the parameter $A$ determines the separation of the equilibrium heights through $H_+-H_-=\sqrt{1-A}$; in particular, the $\rho_\pm$ equilibria coalesce to the common value $\log(1/2)$ as $A\to1$.    At $\sigma=0$, \eqref{ss_profile_equation} reduces to the second order Li\'enard-type equation
\begin{equation}\label{ss_reduced_scalar_equation}
    \rho''=F(\rho)-G(\rho)\rho',
\end{equation}
and this special structure allows for the use of powerful planar dynamical systems techniques.  The following summarizes the bore wave existence result for \eqref{ss_reduced_scalar_equation} proved in \cite{Stevenson_Tice_bores} and introduces some notation that will be used throughout the paper.  See Figure \ref{parameter_regimes} for a graphical depiction of the sets $\mathcal{C}_{+1}$ and $\mathcal{C}_{-1}$ described in the theorem.

\begin{thm}\label{main_no_st_bores}
Let $a,\mu >0$ and $g \ge 0$.  For $A \in (0,1)$ and $F: \mathbb{R} \to \mathbb{R}$ given by \eqref{ss_coefficients} let $\rho_\ast = \rho_\ast(A) > \rho_+$ be uniquely determined by 
\begin{equation}\label{ss_turning_value}
    \int_{\rho_-}^{\rho_\ast}F(r)\,dr=0.
\end{equation}
Define the sets 
\begin{equation}\label{ss_parameter_regions}
    \begin{aligned}
        \mathcal C_{+1}&=\left\{(g,A)\in[0,\infty)\times(0,1): g<A^2e^{-3\rho_\ast(A)}\right\},\\
        \mathcal C_{-1} &=\left\{(g,A)\in[0,\infty)\times(0,1): g>A^2e^{-3\rho_-(A)}\right\}.
\end{aligned}
\end{equation}
Suppose $\iota\in\{-1,1\}$ and $(g,A)\in\mathcal C_{\iota}$.  Then  the vector field $X_0 : \mathbb{R}^2 \to \mathbb{R}^2$ defined by $X_0(x)=(x_2,F(x_1)-G(x_1)x_2)$ has a smooth heteroclinic solution $x_0:\mathbb{R}\to\mathbb{R}^2$ satisfying
\begin{equation}\label{ss_reduced_limits}
        \lim_{t\to\pm\infty}x_0(\iota t)=(\rho_\pm,0).
\end{equation}
The convergence is exponential, and the orbit $\Gamma_0=x_0(\mathbb{R})\cup\{(\rho_-,0),(\rho_+,0)\}$ is compact. The equilibrium point $(\rho_-,0)$ is a hyperbolic saddle, while the equilibrium point $(\rho_+,0)$ is a  hyperbolic sink if $\iota=1$ and a hyperbolic source if $\iota=-1$.   The corresponding height $H = \exp(e_1 \cdot x_0)$, which solves \eqref{ss_height_equation} with $\sigma=0$, is a surging bore if $\iota =-1$ and an ebbing bore if $\iota = +1$.
\end{thm}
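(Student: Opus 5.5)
We work with the planar Liénard system $x_1'=x_2$, $x_2'=F(x_1)-G(x_1)x_2$ and use the energy $E(x)=\tfrac12 x_2^2-\Phi(x_1)$, where $\Phi(r)=\int_{\rho_-}^r F$.

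\emph{Step 1: the equilibria.} We first record the sign structure of $F$. Write $F(r)=\frac{a}{\mu A}e^{-r}\bigl(-e^{2r}+e^r-\tfrac A4\bigr)$. Hence $F<0$ outside $[\rho_-,\rho_+]$, $F>0$ on $(\rho_-,\rho_+)$, $F'(\rho_-)>0$ and $F'(\rho_+)<0$.

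The linearization at $(\rho_\pm,0)$ has trace $-G(\rho_\pm)$ and determinant $-F'(\rho_\pm)$. Consequently $(\rho_-,0)$ is always a hyperbolic saddle. The point $(\rho_+,0)$ is a hyperbolic sink when $G(\rho_+)>0$ and a source when $G(\rho_+)<0$. Since $G(r)>0$ is equivalent to $g<A^2e^{-3r}$ and $e^{-3r}$ is decreasing, we get:
\begin{itemize}
\item on $\mathcal C_{+1}$ (where $\rho_\ast>\rho_+$), $G>0$ on $(-\infty,\rho_\ast]$;
\item on $\mathcal C_{-1}$, $G<0$ on $[\rho_-,\infty)$.
\end{itemize}

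We also check that $\rho_\ast$ is well defined. $\Phi$ increases on $(\rho_-,\rho_+)$ and decreases to $-\infty$ afterwards, because $F\sim -e^r$. So $\rho_\ast$ is the unique zero of $\Phi$ beyond $\rho_+$, and $\Phi>0$ on $(\rho_-,\rho_\ast)$.

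\emph{Step 2: the case $\iota=1$.} Along trajectories $\frac{d}{dt}E=-G(x_1)x_2^2$. We take the branch of the unstable manifold of $(\rho_-,0)$ entering $\{x_1>\rho_-,x_2>0\}$, which exists because the unstable eigenvector has positive slope.

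While the trajectory stays in $x_1<\rho_\ast$ we have $E\le E(\rho_-,0)=0$, hence $\tfrac12x_2^2\le\Phi(x_1)$. We then argue as follows:
\begin{itemize}
\item $x_1$ can never reach $\rho_\ast$ with $x_2\ne0$. If $x_1$ reached $\rho_\ast$ we would need $x_2=0$ there and $E$ would be nonincreasing with strict decrease, a contradiction.
\item Hence the orbit is trapped in the compact region $K=\{\rho_-\le x_1\le\rho_\ast,\ \tfrac12x_2^2\le\Phi(x_1)\}$.
\item $E$ is a Lyapunov function on $K$ and strictly decreases off $\{x_2=0\}$. By LaSalle, the $\omega$-limit set lies in the largest invariant subset of $\{x_2=0\}\cap K$, which consists of equilibria, namely $(\rho_\pm,0)$.
\item The limit cannot be $(\rho_-,0)$, since $E$ has dropped strictly below $0=E(\rho_-,0)$.
\end{itemize}
Therefore $x_0(t)\to(\rho_+,0)$, and the convergence is exponential because the sink is hyperbolic. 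Backward convergence to $(\rho_-,0)$ holds by construction, and compactness of $\Gamma_0$ follows.

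\emph{Step 3: the case $\iota=-1$.} We reverse time: set $y(t)=x(-t)$. This gives the Liénard system $y_1'=y_2$, $y_2'=-F(y_1)+G(y_1)y_2$ after replacing $y_2\to -y_2$. It is cleaner to treat it directly with the energy identity $\frac{d}{dt}E=-G x_2^2\ge0$ along forward trajectories.

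We follow the stable manifold of $(\rho_-,0)$ backward in time. In backward time $E$ is nonincreasing wherever $x_1\ge\rho_-$, and the same trapping argument applies in the region $\rho_-\le x_1\le\rho_\ast$, $\tfrac12x_2^2\le\Phi$. The key simplification is that $G<0$ on all of $[\rho_-,\infty)$, so the sign condition holds on the whole trapping region. This is why only $\rho_-$ appears in the definition of $\mathcal C_{-1}$.

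LaSalle in reversed time then gives $\alpha$-limit $(\rho_+,0)$, which is a hyperbolic source. The sign of $H'$ along the orbit ($x_2>0$ or $<0$) determines ebbing versus surging.

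\emph{Main obstacle.} The main difficulty is the trapping step: ruling out that the manifold branch exits through $x_1=\rho_-$ or $x_1=\rho_\ast$, and verifying the correct branch orientation. This needs $\Phi(\rho_-)=\Phi(\rho_\ast)=0$ together with strict monotonicity of $E$. It is also where the exact thresholds in \eqref{ss_parameter_regions} are used.
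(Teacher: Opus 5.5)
Your argument is correct, and it is self-contained. The paper gives no argument of its own for this theorem; it simply defers to Lemmas 2.1--2.3 and Theorem 2.5 of \cite{Stevenson_Tice_bores}, so there is no in-paper proof to match against.

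Your ingredients are these:
\begin{itemize}
\item the energy $E=\tfrac12 x_2^2-\int_{\rho_-}^{x_1}F$, satisfying $E'=-G(x_1)x_2^2$;
\item trapping in the region $\{E\le 0,\ \rho_-\le x_1\le\rho_\ast\}$, which works because $\int_{\rho_-}^{r}F$ vanishes at both $r=\rho_-$ and $r=\rho_\ast$, so $E<0$ strictly forbids a first exit through either wall;
\item LaSalle applied to the relevant branch of $W^u(\rho_-,0)$, or of $W^s(\rho_-,0)$ in backward time when $\iota=-1$.
\end{itemize}
This makes explicit what the paper only asserts in the Remark after the theorem: $\mathcal C_{\iota}$ is exactly the condition $\iota G>0$ on $[\rho_-,\rho_\ast]$. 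It is also the same energy structure the paper uses at $\sigma>0$ in Theorem \ref{bnded_is_hetero_orbit}. Since $A^2e^{-3r}$ is decreasing, $G<0$ on $[\rho_-,\rho_\ast]$ if and only if $G(\rho_-)<0$. That is the real reason only $\rho_-$ appears in $\mathcal C_{-1}$; the sign of $G$ beyond $\rho_\ast$ is never used.

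A few small points to tidy:
\begin{itemize}
\item In Step 3 you must take the branch of $W^s(\rho_-,0)$ lying in $\{x_1>\rho_-,\ x_2<0\}$ (stable eigenvector $(1,\lambda_s)$ with $\lambda_s<0$). The other branch enters $x_1<\rho_-$, where $G$ can be positive and the monotonicity of $E$ fails.
\item The time-reversed system you wrote is off. It should read $y_1'=z_2$, $z_2'=F(y_1)+G(y_1)z_2$, i.e.\ only the damping sign flips. You never use it, so this is harmless.
\item Ebbing versus surging is decided by the limits $H\to H_\pm$ at $\mp\infty$, not by the sign of $x_2$. The sign of $x_2$ need not be constant along the orbit when $(\rho_+,0)$ is a focus.
\end{itemize}
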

\begin{proof}
These are proved in Lemmas 2.1--2.3 and Theorem 2.5 of \cite{Stevenson_Tice_bores}.
\end{proof} 

\begin{remark}
The defining feature of the regions  $\mathcal{C}_{+1}$ and $\mathcal{C}_{-1}$ is that they determine the sign of $G$ on $[\rho_-,\rho_\ast]$, with $G >0$ in $\mathcal{C}_{+1}$ and $G < 0$ in $\mathcal{C}_{-1}$.  For $\iota \in \{-1,+1\}$ we have that $\mathcal{C}_\iota \subset \{ (g,A) \in [0,\infty) \times (0,1) : \iota (8-g) >0\}$.  Moreover, when $\iota(8-g)>0$ we have that $(g,A) \in \mathcal{C}_\iota$ for all $A$ sufficiently close to unity.
\end{remark}

\begin{figure}[!h]
    \centering
    \scalebox{0.65}{\includegraphics{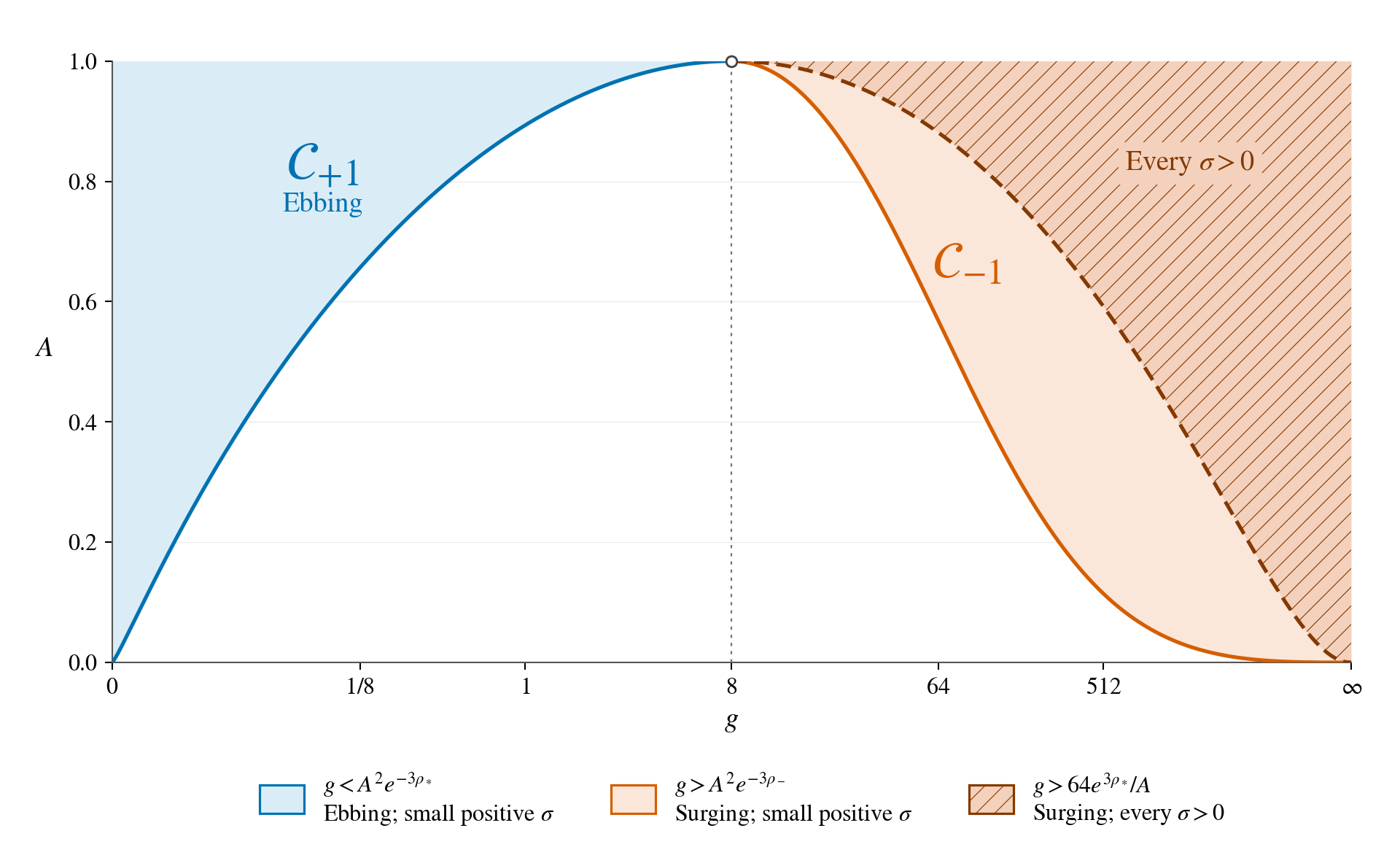}}
    \caption{Plots of the two parameter regimes $\mathcal{C}_{+1}$ and $\mathcal{C}_{-1}$.  The dashed subregion of $\mathcal{C}_{-1}$ is the valid parameter regime for Theorem \ref{main_large_grav}.
    }
    \label{parameter_regimes}
\end{figure}

When capillarity is retained, the third order problem \eqref{ss_profile_equation} is not amenable to analysis with planar dynamical systems techniques, and the existence of heteroclinic orbits (and thus bore wave solutions to \eqref{vsv_tw_form}) is a more subtle question.  Our main results are a trio of theorems that provide for the existence of such orbits under various assumptions.  Rather than planar dynamical systems techniques, we employ tools from geometric singular perturbation theory \cite{FENICHEL197953}, Leray-Schauder degree theory \cite{LeraySchauder1934}, and center manifold theory in order to construct the solutions.

Before stating our first theorem we clarify some notation.  For $k \in \mathbb{N}$ and $I \subseteq \mathbb{R}$ an interval, the Banach space $C^k_b(I)$ consists of all $k$-times continuously differentiable functions $f : I \to \mathbb{R}$ with all derivatives bounded, equipped with the norm 
\begin{equation}
 \|f\|_{C^k_b} = \sum_{j=0}^k \sup_{x \in I} |f^{(j)}(x)|.
\end{equation}
In particular, $C^0_b(I)$ is the space of bounded and continuous functions with the usual sup-norm.  We endow the space $C^\infty(\mathbb{R})$ of smooth real-valued functions on all of $\mathbb{R}$ with its usual Fr\'echet structure.  In particular, this means that $f_n \to f$ in $C^\infty(\mathbb{R})$ as $n \to \infty$ if and only if $f_n \to f$ in $C^k_b(I)$ as $n \to \infty$ for every bounded interval $I \subset \mathbb{R}$ and every $k \in \mathbb{N}$.  We also write $C^\infty_b(\mathbb{R}) = \bigcap_{k \in \mathbb{N}} C^k_b(\mathbb{R})$.

With this notation established we can now state our first main theorem, which shows that bore wave solutions persist for $(g,A) \in \mathcal{C}_{+1} \cup \mathcal{C}_{-1}$ provided that surface tension is sufficiently small.  We emphasize that the geometric singular perturbation theory tools used to prove the theorem establish the convergence of the solutions with $\sigma >0$ to the solutions with $\sigma =0$ provided by Theorem \ref{main_no_st_bores}.  We state the theorem here in terms of the height $H = e^\rho$, but its proof is carried out in terms of $\rho$.

\begin{thm}[Proved in Section \ref{sec_small_cap}]\label{main_small_sigma_persistence_in_C}
Fix $a,\mu>0$ and suppose that $(g,A)\in\mathcal C_{\iota}$  for $\iota\in\{-1,1\}$.  There exists $\sigma_\ast>0$ such that for every $0\le\sigma\le\sigma_\ast$ there exists a positive function $H_\sigma \in C^\infty_b(\mathbb{R})$ satisfying the following.
\begin{enumerate}
    \item The function $H_\sigma$ solves \eqref{ss_height_equation} for every $0\le\sigma\le\sigma_\ast$.

    \item For every $0\le\sigma\le\sigma_\ast$ we have that
\begin{equation}\label{ss_main_limits}
    \lim_{t\to\pm\infty} \bigl(H_\sigma(\iota t), H_\sigma'(\iota t),H_\sigma''(\iota t)\bigr)  =(H_\pm(A),0,0).
\end{equation}
In particular, $H_\sigma$ is an ebbing bore  flowing from $H_-(A)$ to $H_+(A)$ if $(g,A) \in \mathcal C_{+1}$, and a surging bore flowing from $H_+(A)$ to $H_-(A)$ if $(g,A) \in \mathcal C_{-1}$.

\item The  map $[0,\sigma_\ast]\ni\sigma \mapsto H_\sigma \in C^\infty(\mathbb{R})$ is continuous, and in particular  $H_\sigma\to H_0$ in $C^\infty(\mathbb{R})$ as $\sigma \to 0$.
\end{enumerate}
\end{thm}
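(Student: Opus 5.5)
Our plan is to treat \eqref{ss_profile_equation} as a singularly perturbed system and apply Fenichel's geometric singular perturbation theory, with the heteroclinic $x_0$ of Theorem \ref{main_no_st_bores} as the slow dynamics. We write $x_1=\rho$, $x_2=\rho'$, $y=\rho''$, so that
\begin{equation*}
x_1'=x_2,\qquad x_2'=y,\qquad \sigma y' = 4\mu A e^{-2x_1}\bigl(F(x_1)-G(x_1)x_2-y\bigr)-\sigma\bigl(x_2^3+3x_2 y\bigr).
\end{equation*}
Rescaling time by $\tau=t/\sigma$ gives the fast system, which at $\sigma=0$ has the critical manifold $M_0=\{y=F(x_1)-G(x_1)x_2\}$. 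The linearization in the fast direction is $-4\mu A e^{-2x_1}<0$, so $M_0$ is normally hyperbolic and attracting, uniformly on compact sets. The flow on $M_0$ is exactly the planar field $X_0$.

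The steps are as follows. First, we fix a large compact set $K\subset\mathbb{R}^2$ whose interior contains the compact orbit $\Gamma_0$, and take the compact piece of $M_0$ over $K$ (modifying the vector field outside a neighborhood if needed, so that we may work with an overflowing/inflowing invariant set). Second, we invoke Fenichel's first theorem to obtain, for $0\le\sigma\le\sigma_1$, a locally invariant slow manifold $M_\sigma=\{y=h_\sigma(x_1,x_2)\}$ with $h_\sigma\to F-Gx_2$ in $C^k$ on $K$ for every $k$, smoothly in $\sigma$. The restricted flow is then a planar field $X_\sigma=(x_2,h_\sigma)$ that is a $C^k$-small perturbation of $X_0$. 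We also check that the equilibria $(\rho_\pm,0,0)$ lie on $M_\sigma$: they are equilibria of the full system and lie in $M_0$, and since $M_\sigma$ attracts and all nearby bounded invariant sets lie in it, we get $h_\sigma(\rho_\pm,0)=0$. Third, we run a persistence argument in the plane. The point $(\rho_-,0)$ is a hyperbolic saddle of $X_0$ and $(\rho_+,0)$ is a hyperbolic sink or source, so both remain hyperbolic of the same type for $X_\sigma$ when $\sigma$ is small. Take $\iota=1$ for definiteness. The branch of the unstable manifold of the saddle that forms $\Gamma_0$ enters the basin of the sink transversally, since an open basin contains a small ball around $(\rho_+,0)$ crossed by $x_0$ at finite time. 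By continuous dependence of unstable manifolds and of trajectories on finite time intervals, the perturbed unstable branch also enters that ball, and hence converges to $(\rho_+,0)$. For $\iota=-1$ we reverse time. This produces the heteroclinic $x_\sigma$, and $\rho_\sigma=e_1\cdot x_\sigma$ and $H_\sigma=e^{\rho_\sigma}$ solve \eqref{ss_height_equation}.

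Fourth, we obtain the limits \eqref{ss_main_limits}: $(\rho,\rho',\rho'')\to(\rho_\pm,0,0)$ because $y=h_\sigma(x)\to0$, and the convergence is exponential by hyperbolicity. Smoothness and boundedness of all derivatives follow by bootstrapping the ODE along a compact orbit, which gives $H_\sigma\in C^\infty_b$. Fifth, we prove continuity of $\sigma\mapsto H_\sigma$ in $C^\infty(\mathbb{R})$. We normalize the time translation by requiring, for example, $x_{\sigma,1}(0)=(\rho_-+\rho_+)/2$; this section is crossed transversally by $x_0$ since $\rho_0'\neq0$ there. We then use continuous dependence of the unstable manifold and of the flow on $\sigma$, which gives uniform convergence on compact time intervals. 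Higher derivatives are controlled via the equation.

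The main obstacle is the rigorous application of Fenichel theory on a noncompact critical manifold. This requires choosing the compact piece with appropriate boundary behavior, or using a cutoff of the vector field, and confirming that the trajectories we construct stay inside $K$ so that the locally invariant manifold is genuinely invariant along them. For this we would first try to choose $K$ so that $\Gamma_0$ lies well inside it, then use the $C^0$-closeness of $x_\sigma$ to $x_0$ on finite times, together with local hyperbolic neighborhoods near the endpoints, to trap the orbit. Continuity at $\sigma=0$ in item (3), including the case $\sigma=0$ itself, is then immediate because $H_0$ is the solution given by Theorem \ref{main_no_st_bores}.
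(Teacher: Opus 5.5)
Your proposal follows essentially the same route as the paper: a Fenichel slow manifold $y=h_\sigma(x)$ over a compact ball containing $\Gamma_0$, the observation that $(\rho_\pm,0)$ remain hyperbolic equilibria of $X_\sigma=(x_2,h_\sigma)$, persistence of the saddle-to-sink connection for $\iota X_\sigma$ via continuous dependence of the local unstable manifold and a uniform trapping neighborhood of the sink, and then lifting and bootstrapping. The one step where the paper is noticeably more careful is the $\sigma\to0$ control of higher derivatives: solving \eqref{ss_profile_equation} for $\rho'''$ loses a factor $\sigma^{-1}$, and Fenichel's $\sigma$-range shrinks as the regularity $k$ grows, so the paper uses only a $C^2$ slow manifold and gets $\sigma$-uniform $C^m_b$ bounds from the estimate $\|v\|_{C^0_b}\le\beta^{-1}\|f\|_{C^0_b}$ for $\sigma v'+\alpha_\sigma v=f$ with $\alpha_\sigma\ge\beta>0$, which is a cleaner way to make your ``higher derivatives are controlled via the equation'' rigorous.
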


Our second result identifies a subset of $\mathcal{C}_{-1}$ in which we can guarantee the existence of surging bore waves for all values of surface tension $\sigma >0$.  The proof is based on degree theory and so lacks some of the quantitative estimates of our other two main results.

\begin{thm}[Proved in Section \ref{sec_large_gravity}]\label{main_large_grav}
Fix $a,\mu >0$.  If $\sigma >0$, $A \in (0,1)$, and $g > 64 e^{3 \rho_\ast}/A$, then equation \eqref{ss_height_equation} has a positive surging bore wave solution $H\in C^\infty_b(\mathbb{R})$ satisfying
\begin{equation}\label{lg_final_limits}
        \lim_{t\to-\infty}(H(t),H'(t),H''(t))=(H_+,0,0),
        \text{ and }
        \lim_{t\to\infty}(H(t),H'(t),H''(t))=(H_-,0,0).
\end{equation}
\end{thm}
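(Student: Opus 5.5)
We propose to prove Theorem \ref{main_large_grav} by a continuation argument in the capillarity parameter, using Leray--Schauder degree. We start from the $\sigma=0$ surging bore of Theorem \ref{main_no_st_bores}, or from the small-$\sigma$ solutions of Theorem \ref{main_small_sigma_persistence_in_C}, and homotope $\sigma$ from $0$ to the target value. The condition $g>64e^{3\rho_\ast}/A$ is stronger than the defining condition of $\mathcal C_{-1}$: since $e^{\rho_-}<e^{\rho_\ast}$ and $A<1$, it implies $g>A^2e^{-3\rho_-}$. So $G<0$ on $[\rho_-,\rho_\ast]$, uniformly in a quantitative sense. The idea is that $-G$ then acts as a strong antidamping, and it should control the dispersive term $\sigma\rho'''$ for every $\sigma$, not just small ones.

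\textbf{Step 1: functional setting.} We work with \eqref{ss_profile_equation} in $\rho$ and write $\rho=\bar\rho+w$. Here $\bar\rho$ is a fixed smooth monotone front with $\bar\rho(t)\to\rho_\pm$ as $t\to\mp\infty$, and $w$ lies in an exponentially weighted space, or in $C^2_b$ with the decay handled separately. Translation invariance is removed by a phase condition, for example $\rho(0)=(\rho_-+\rho_+)/2$. The equation is recast as $w=\mathcal K_\sigma(w)$, where $\mathcal K_\sigma$ is compact. To get compactness we invert a linear operator: for instance, we move $\sigma\rho'''+\rho''-\lambda\rho'$ to the left with constant coefficients, and put the remaining lower-order terms on the right, composed with a compact embedding on bounded sets. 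At $\sigma=0$ the solution $x_0$ is a transversal connection. The unstable manifold of the source $(\rho_+,0)$ is two-dimensional and the stable manifold of the saddle $(\rho_-,0)$ is one-dimensional, and they intersect transversally modulo translation. This gives nonzero local index, hence nonzero degree, at $\sigma=0$.

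\textbf{Step 2: a priori bounds uniform in $\sigma\in[0,\sigma_0]$.} This is the heart of the argument and the place where the gravity condition enters. We want to show that every solution with the correct limits satisfies $\rho_-\le\rho\le\rho_\ast$ (or a slightly larger range) and obeys bounds on $\rho',\rho''$. We would first try energy identities. Multiply \eqref{ss_height_equation} (equivalently \eqref{ss_profile_equation}) by $H'$ or by $\rho'$ and integrate over $(-\infty,t)$. Let $\Phi$ be an antiderivative of $F$; the identity then yields $\Phi(\rho(t))-\Phi(\rho_+)$ plus a dissipation integral $\int(-G)(\rho')^2$. The capillary term, after integration by parts, becomes $\sigma(\rho'\rho''-\dots)$ boundary terms plus lower-order pieces. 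In the $H$ formulation $\sigma HH'''H'/H$, these terms either have a sign or are absorbed by the dissipation. The absorption needs $-G\gtrsim$ the size of the capillary cross terms. The constant $64e^{3\rho_\ast}/A$ presumably comes from exactly this: it requires $ge^{2\rho}/(4\mu A)-Ae^{-\rho}/(4\mu)$ to dominate the relevant constants on $[\rho_-,\rho_\ast]$. The sign of the resulting inequality then traps $\rho$ below $\rho_\ast$, because of the definition \eqref{ss_turning_value}, and above $\rho_-$. Derivative bounds follow by interpolation from the ODE, and exponential decay at $\pm\infty$ follows from hyperbolicity of the end states.

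\textbf{Step 3: continuation and nondegeneracy at infinity.} The a priori bounds exclude solutions on the boundary of a bounded open set for every $\sigma$ in the range. Homotopy invariance then transfers the nonzero degree to the target $\sigma$ and produces a solution. The phase condition also has to prevent loss of compactness by escape of the front to $\pm\infty$. For this we check that the limits remain the correct equilibria: the linearization at $\rho_+$ must keep the right dimension of unstable manifold for every $\sigma>0$. For the third-order system this is a cubic characteristic polynomial, and its root count should stay fixed under $G(\rho_\pm)<0$. Finally, smoothness and the limits \eqref{lg_final_limits} follow from $H=e^\rho$ and the ODE.

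I expect Step 2 to be the main obstacle. The capillary term destroys the Liénard structure, and making the energy estimate close for arbitrary $\sigma$ is exactly what forces the explicit lower bound on $g$.
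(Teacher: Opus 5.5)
\textbf{There is a genuine gap: Step 1 does not give a setting in which Leray--Schauder degree is defined, and this is where the argument breaks.}

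On $\mathbb{R}$ the embedding $C^3_b(\mathbb{R})\hookrightarrow C^2_b(\mathbb{R})$ is not compact. Neither is the embedding between exponentially weighted spaces carrying the same weight; translated bumps are a counterexample in both cases. So inverting a constant-coefficient operator and moving the rest to the right does not make $\mathcal K_\sigma$ compact. Your phase condition removes exact translations, but not this loss of compactness in the tails.

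Continuing in $\sigma$ from $\sigma=0$ adds a second problem, because it is a singular perturbation:
\begin{enumerate}
\item The order drops from three to two, so terms you move to the right, such as $(1-4\mu Ae^{-2\rho})\rho''$, become principal at $\sigma=0$. The homotopy is therefore not a compact perturbation of the identity near $\sigma=0$.
\item The bounds the energy provides degenerate as $\sigma\to0$ (e.g.\ $\|(e^\rho)''\|_{L^2}^2\lesssim\sigma^{-1}$).
\end{enumerate}
Hence transversality of the $\sigma=0$ connection, even granting that it yields a local index, cannot simply be carried to the target $\sigma$. Starting instead from Theorem~\ref{main_small_sigma_persistence_in_C} at some $\sigma_0>0$ still leaves the whole-line compactness problem, plus an index computation you have not done.

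The paper sidesteps all of this as follows.
\begin{enumerate}
\item It fixes $\sigma>0$ and works on $[-n,n]$ with $\rho'(\pm n)=0$ and $\rho(0)=\rho_a$.
\item It inverts $\partial_t^3$ (Lemma~\ref{lg_affine_inverse}). This is compact into $C^2([-n,n])$, since every other term has order at most two.
\item It homotopes in a parameter $\lambda\in[0,1]$ multiplying $F$ and $G$. At $\lambda=0$ the only solution is the nondegenerate constant $\rho_a$, so the degree is $\pm1$.
\item It lets $n\to\infty$ by a diagonal Arzel\`a--Ascoli argument and identifies the bounded nonconstant limit as the heteroclinic via Theorem~\ref{bnded_is_hetero_orbit}.
\end{enumerate}

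\textbf{Step 2 also misplaces the role of the gravity condition.} No absorption of capillary cross terms is needed, and there is no smallness requirement on $\sigma$. With the multiplier $\rho'=H'/H$ that you already mention, the capillary term becomes $\frac{\sigma}{4\mu A}H'H'''=\frac{\sigma}{4\mu A}\bigl[(H'H'')'-(H'')^2\bigr]$. From \eqref{lg_equation_height_form} one then gets exactly $\mathcal E'=-G(\rho)(\rho')^2+\frac{\sigma}{4\mu A}((e^\rho)'')^2$, where $\mathcal E=V(\rho)+\frac12(\rho')^2+\frac{\sigma}{4\mu A}(e^\rho)'(e^\rho)''$.

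The price is the indefinite term $(e^\rho)'(e^\rho)''$ in $\mathcal E$. Because of it, the energy controls $V(\rho)$ only at critical points of $\rho$. Since $V<V(\rho_-)$ also on $(-\infty,\rho_-)$, the energy cannot keep $\rho$ above $\rho_-$ as you claim; for $\sigma>0$ undershoot below $\rho_-$ is not excluded.

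The lower barrier the energy does give is $\alpha=\log(A/4)-\rho_\ast<\rho_-$, characterized by $V(\alpha)=V(\rho_+)$ (Lemma~\ref{lg_amplitude_barrier_lemma}). The hypothesis $g>64e^{3\rho_\ast}/A=A^2e^{-3\alpha}$ is precisely the condition $G<0$ on $[\alpha,\rho_\ast]$, which keeps the energy monotone on the whole range available to $\rho$.

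On $[-n,n]$ the paper implements this with the truncations $F_\pm$ and energies $E_\pm$ on the two half-intervals. These confine $\rho$ to $(\alpha,\rho_+)$ on $[0,n]$ and to $(\rho_-,\rho_\ast)$ on $[-n,0]$. The derivative bounds then follow from the $L^2$ bound on $(e^\rho)''$ via Gagliardo--Nirenberg and Gronwall (Lemma~\ref{lg_derivative_bounds}).
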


Our third main result shows that as long as $g \neq 8$, corresponding to a noncritical Froude number $\mathrm{Fr}=\sqrt{8/g}$, \eqref{ss_height_equation} admits bore wave solutions for every fixed surface tension $\sigma \ge 0$, provided that the flux parameter $A$ is taken sufficiently close to unity.  The center manifold techniques used to prove the theorem allow us to derive some more detailed information on the solutions than in the other two results. Indeed, the theorem shows 
that these solutions have small amplitude, are monotone, and have a common $\tanh$-type profile.  The latter profile only depends on $a$, $g$, and $A$, but we also derive higher order terms in the expansion that reveal the first contributions of viscosity and capillarity.  See Figure \ref{profile_sketch} for plots of these profiles.

\begin{thm}[Proved in Section \ref{sec_center_manifold}]\label{main_small_monotone_bores}
Fix $a,\mu>0$, $\sigma\ge0$, and $g\in [0,\infty) \backslash \{8\}$, and set $\iota=\operatorname{sgn}(8-g)$.  There exist $A_0\in(0,1)$, constants $C_0,C_1 >0$, and an open set $\mathcal{V} \subseteq \mathbb{R}^3$ containing $(1/2,0,0)$ such that for every $A\in(A_0,1)$ there exists a positive solution $H\in C^\infty_b(\mathbb{R})$ of \eqref{ss_height_equation} satisfying the following.

\begin{enumerate}
    \item We have $H(0)=1/2$, $\iota H'(t)>0$ for every $t\in\mathbb{R}$, and
\begin{equation}\label{cm_intro_limits}
    \lim_{t\to\pm\infty}\bigl(H(\iota t),H'(\iota t),H''(\iota t)\bigr)=(H_\pm,0,0).
\end{equation}
In particular $H$ is a monotone bore wave that is ebbing if $g<8$ and surging if $g>8$.

 \item The trajectory $(H,H',H'')$ lies in $\mathcal V$.  The solution $H$ is unique in the sense that any other positive heteroclinic solution $\widetilde H$ of \eqref{ss_height_equation} connecting $H_-$ and $H_+$ with $(\widetilde H,\widetilde H',\widetilde H'')(t) \in \mathcal{V}$ for all $t \in \mathbb{R}$ must satisfy $\widetilde H(t) = H(t-t_0)$ for some $t_0 \in \mathbb{R}$.

    \item The solution satisfies the uniform approximation
\begin{equation}\label{cm_intro_profile}
    \sup_{t\in\mathbb{R}}\left|H(t)-\frac12-\frac{(1-A)^{1/2}}2 \tanh\left(\frac{8a(1-A)^{1/2}t}{8-g}\right)\right|\le C_0(1-A).
\end{equation}
Moreover, for every $t\ge0$ we have that
\begin{equation}\label{cm_intro_tail}
    |H(\pm\iota t)-H_\pm|\le C_0(1-A)^{1/2} \exp(-C_1(1-A)^{1/2} t),
\end{equation}
and, for every $j\in\{1,2,3\}$ and $t\in\mathbb{R}$,
\begin{equation}\label{cm_intro_derivative_tails}
    |H^{(j)}(t)|\le C_0 (1-A)^{(j+1)/2} \exp(-C_1(1-A)^{1/2} |t|).
\end{equation}

\item The solution satisfies the refined approximation
\begin{multline}\label{cm_intro_second_order_profile}
    \sup_{t\in\mathbb R}\Bigg| H(t)-\frac12 -\frac{(1-A)^{1/2}}2
          \tanh\left(\frac{8a(1-A)^{1/2}t}{8-g}\right)
    \\
       -(1-A)\left(\frac{g+4}{8-g} +\frac{128a\mu}{(8-g)^2}\right)
          \operatorname{sech}^2\left(\frac{8a(1-A)^{1/2}t}{8-g}\right)
          \log\cosh\left(\frac{8a(1-A)^{1/2}t}{8-g}\right)
       \Bigg|\le C_0(1-A)^{3/2}.
\end{multline}

\item Let $H_0$ denote the solution with $\sigma =0$ and the same parameters $a,\mu,g,A$. Then
\begin{multline}\label{cm_intro_capillary_correction}
    \sup_{t\in\mathbb R}\Bigg|H(t)-H_0(t) -\frac{64\sigma a^2(1-A)^{3/2}}{(8-g)^3}
          \operatorname{sech}^2\left(\frac{8a(1-A)^{1/2}t}{8-g}\right)
    \\
       {}\times\left[ 3\tanh\left(\frac{8a(1-A)^{1/2}t}{8-g}\right) -\frac{16a(1-A)^{1/2}t}{8-g}\right]
       \Bigg|\le C_0(1-A)^2.
\end{multline}
\end{enumerate}
\end{thm}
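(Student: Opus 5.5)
The plan is a center manifold reduction at the degenerate point $A=1$, $H=1/2$. Set $\varepsilon=(1-A)^{1/2}$ and write $H=\tfrac12+h$. We append the trivial equation $\varepsilon'=0$ and pass to the first order system in $(h,h',h'',\varepsilon)\in\mathbb R^4$. Call it (S).

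Linearizing \eqref{ss_height_equation} at $(h,\varepsilon)=(0,0)$ uses $4a(1-H-A/(4H))=O(h^2+\varepsilon^2)$ and $A^2/H^2-gH=4-g/2=(8-g)/2$. This gives
\[
\tfrac{\sigma}{2}h'''=-8\mu h''-\tfrac{8-g}{2}h'.
\]
The characteristic polynomial is $\lambda\bigl(\tfrac{\sigma}{2}\lambda^2+8\mu\lambda+\tfrac{8-g}{2}\bigr)$. It has a simple zero eigenvalue for $h$ and, since $g\ne8$, two roots with nonzero real part. (When $\sigma=0$ there is only one such root; that case is a standard planar argument or is handled the same way.) Together with $\varepsilon$, the center subspace is two dimensional. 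The hyperbolic part is a saddle when $g>8$ and a full source when $g<8$.

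For (S) I would take a local $C^k$ center manifold $W^c$ over a neighborhood $\mathcal V$ of $(1/2,0,0)$ and small $\varepsilon$. Every bounded trajectory lying in $\mathcal V$ lies on $W^c$, and this gives the uniqueness in item (2).

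On $W^c$, $h'$ and $h''$ are functions of $(h,\varepsilon)$. Solving \eqref{ss_height_equation} asymptotically gives $h'=\phi(h,\varepsilon)$. After integrating once and using $4a(1-H-A/(4H))=4a\cdot2(\varepsilon^2/4-h^2)/(1+2h)\cdot(\ldots)$, the leading order is
\[
\tfrac{8-g}{2}h'\approx 8a\bigl(\tfrac{\varepsilon^2}{4}-h^2\bigr).
\]
Rescale $h=\varepsilon u$ and $s=\varepsilon t$. The reduced equation becomes
\[
u_s=\frac{16a}{8-g}\Bigl(\tfrac14-u^2\Bigr)+\varepsilon R(u,\varepsilon),
\]
and the equilibria $u=\pm\tfrac12+O(\varepsilon)$ must be exactly $\varepsilon^{-1}(H_\pm-\tfrac12)=\pm\tfrac12$. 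A scalar ODE between two zeros with a sign-definite right-hand side yields a monotone heteroclinic. Its sign is $\iota$, because $\tfrac14-u^2>0$ on $(-\tfrac12,\tfrac12)$, so $H'$ has the sign of $8-g$. Normalize with $H(0)=1/2$.

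The zeroth order solution is $u=\tfrac12\tanh\bigl(8as/(8-g)\bigr)$. This gives \eqref{cm_intro_profile}, with the error estimated by Gronwall in the rescaled variable, since both endpoints are hyperbolic for the scalar ODE with rate about $\varepsilon$. The rates at the endpoints give \eqref{cm_intro_tail}. For the derivative bounds \eqref{cm_intro_derivative_tails}, note that $h'=\phi$ and $h''$ on $W^c$ are $O(\varepsilon^2)$ multiples of the decaying quantity $\tfrac14-u^2$, and each derivative adds a factor of $\varepsilon$.

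For items (4) and (5), I would compute the center manifold graph to the next order in $\varepsilon$. Substitute $h'=\varepsilon^2\phi_2(u)+\varepsilon^3\phi_3(u,\mu,\sigma,g)+\dots$ into \eqref{ss_height_equation}, using $h''=h'\,\partial_h\phi$ and $h'''$ similarly. Matching terms determines the $O(\varepsilon)$ correction $R$. At this order the $\mu$ term arises from $-4\mu A(H'/H)'$ and the $g$ term from expanding $A^2/H^2-gH$.

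Next, linearize around the $\tanh$ profile: $u=u_0+\varepsilon u_1$, with $u_1'=-\frac{32a}{8-g}u_0u_1+R(u_0,0)$. This is solved by variation of constants with integrating factor $\operatorname{sech}^{-2}$, producing the $\operatorname{sech}^2\log\cosh$ term; the normalization $u_1(0)=0$ fixes the constant. The capillary correction is the $\sigma$-dependent part at the next order. It enters via $\sigma HH'''\sim\varepsilon^4$ and gives the $\operatorname{sech}^2\,[3\tanh-2\cdot\text{argument}]$ form after the same integration. A Gronwall estimate for the remainder then gives $C_0\varepsilon^3$ and $C_0\varepsilon^4$ respectively.

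The main obstacle I expect is uniformity. The center manifold must be taken uniformly in $\varepsilon$ on a fixed neighborhood, and the remainder estimates must be uniform on all of $\mathbb R$ in the slow time, not just locally. I would handle the latter by exploiting the exponential attraction toward $\pm\tfrac12$ in the rescaled scalar equation.
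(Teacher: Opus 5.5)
Your reduction is the paper's: augment by the parameter, take a local center manifold of the augmented system, use its capture property for uniqueness, and note that the equilibria $h=\pm\sqrt{1-A}/2$ lie on it. The differences are that the paper uses $\delta=1-A$ rather than $\varepsilon$, and that it makes explicit a fact you use tacitly. Since $P(h,4h^2)=0$, a division argument gives $h'=k(h,\delta)(\delta/4-h^2)$ with $k\in C^3$ and $k(0,0)=16a/(8-g)$. Your rescaled field $\frac{16a}{8-g}(\frac14-u^2)+\varepsilon R$ needs exactly this, or an implicit-function argument using that $\partial_u R$ is bounded. That is what makes it sign-definite on $(-\frac12,\frac12)$, and what justifies your claim that $h'$ and $h''$ are multiples of $\frac14-u^2$. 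You should state it. One slip: for $g<8$ the nonzero eigenvalues have negative real part, so the hyperbolic part is a sink, not a source. This is harmless, since only hyperbolicity is used.

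Where you genuinely differ is the profile analysis.

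\emph{Your route.} You linearize about $\frac12\tanh(8as/(8-g))$ and solve by Duhamel with the $\cosh^2$ integrating factor. This does produce the $\operatorname{sech}^2\theta\log\cosh\theta$ and $\operatorname{sech}^2\theta\,(3\tanh\theta-2\theta)$ terms, with $\theta=8a(1-A)^{1/2}t/(8-g)$. Uniformity on $\mathbb{R}$ does not come from Gronwall, which only gives growth. It comes from the linearized coefficient $-\frac{32a}{8-g}u_0$ being contracting away from $s=0$ in both time directions. The $O(\varepsilon)$ error in that coefficient costs a factor $e^{C\varepsilon|s|}$, which the $\operatorname{sech}^2$ decay absorbs. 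Your scaling remark that $\sigma HH'''=O(\varepsilon^4)$ is what makes the $\varepsilon^3$ coefficient $\sigma$-independent. This matches the paper's observation that $k_\sigma(0,0)$ and $\partial_h k_\sigma(0,0)$ do not depend on $\sigma$.

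\emph{The paper's route.} It sets $r=\operatorname{arctanh}(2h/\sqrt\delta)$, which turns the reduced equation into the quadrature $r'=\frac{\sqrt\delta}{2}k(\frac{\sqrt\delta}{2}\tanh r,\delta)$. It computes $\partial_h k(0,0)$ and $k_\sigma-k_0=\frac{2048\sigma a^3}{(8-g)^4}(\delta-12h^2)+O(\delta^{3/2})$ from the invariance identity divided by $\delta/4-h^2$. Every estimate is then an expansion of an explicit integral in $r$. This is followed by Taylor expansion of $\tanh$ at points where its derivatives are $O(e^{-c\sqrt\delta|t|})$.

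The paper's route exploits the exact solvability of the scalar autonomous equation and needs no linearization or nonlinear bootstrap. Yours is the generic perturbative scheme, somewhat heavier here but not reliant on that structure.
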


\begin{figure}[!h]
    \centering
    \scalebox{0.75}{\includegraphics{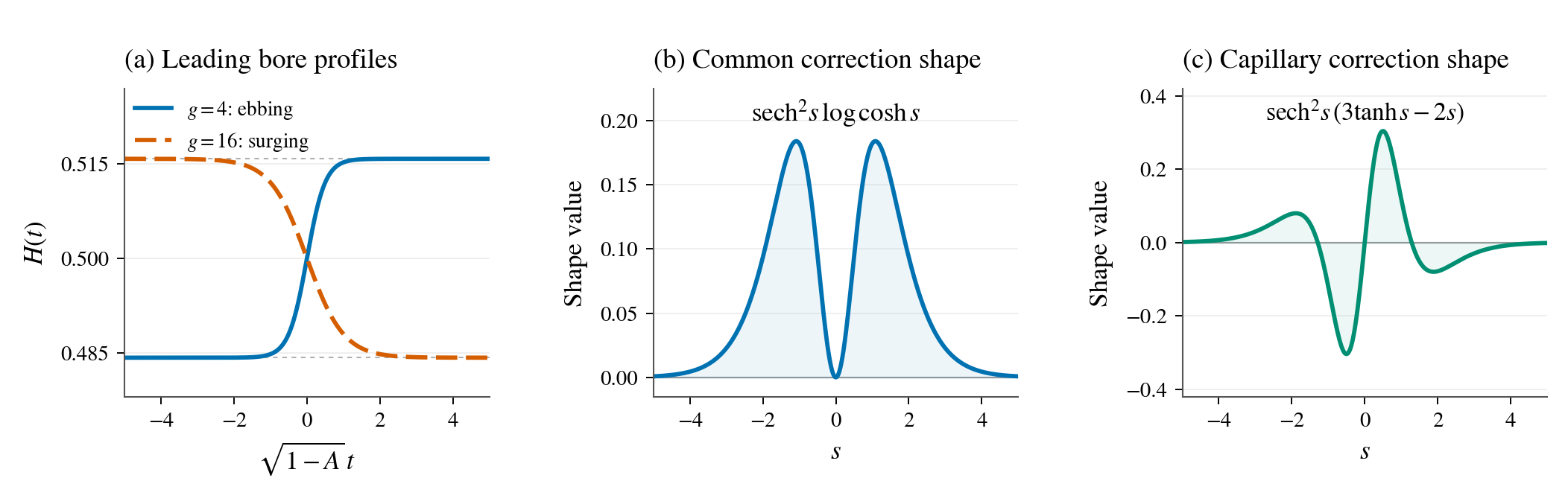}}
    \caption{Plots of approximate profiles identified in Theorem \ref{main_small_monotone_bores}.  The $\tanh$ profiles from item 3 are visualized in plot $(a)$.  The next order viscous corrections from item 4 is plotted in $(b)$.  The highest order correction from capillarity, identified in item 5, is plotted in $(c)$.
    }
    \label{profile_sketch}
\end{figure}

\begin{remark} 
It is possible to extend the estimate \eqref{cm_intro_derivative_tails} to be valid for all $j \ge 4$ provided the constant $C_0$ on the right is changed to a constant $C_j$, depending on $j$.  We have omitted these estimates for the sake of brevity.
\end{remark}

We conclude this introduction by mentioning a few related works in the literature. Hydraulic shock profiles for an inviscid version of  \eqref{vsv_tw_form} with quadratic bottom drag and no capillarity
were  studied by Yang and Zumbrun \cite{YangZumbrun2020} and Faye, Rodrigues, Yang, and Zumbrun
\cite{FayeRodriguesYangZumbrun2024}. The appearance of the leading order $\tanh$ profile in Theorem \ref{main_small_monotone_bores} is consistent with the classical theory of small-amplitude viscous shocks; see Majda and Pego \cite{MajdaPego1985} for the center-manifold construction and Plaza and Zumbrun \cite{PlazaZumbrun2004} for the profile estimates.  Small-amplitude periodic traveling waves, known as roll waves, were constructed by Stevenson \cite{StevensonRoll2025} for a two-dimensional counterpart of \eqref{vsv_tw_form} with $g,\sigma>0$; the paper also indicates that the construction can be adapted to produce one-dimensional periodic waves.

%%%%%%%%%%%%%%%%%%%%%%%%%%%%%%%%%%%%%%%%%%%%%%%%%%%%%%%%%%%%
%%%%%%%%%%%%%%%%%%%%%%%%%%%%%%%%%%%%%%%%%%%%%%%%%%%%%%%%%%%%
%%%%%%%%%%%%%%%%%%%%%%%%%%%%%%%%%%%%%%%%%%%%%%%%%%%%%%%%%%%%
\section{Small capillarity and $(g,A) \in \mathcal{C}_{+1} \cup \mathcal{C}_{-1}$}\label{sec_small_cap}
%%%%%%%%%%%%%%%%%%%%%%%%%%%%%%%%%%%%%%%%%%%%%%%%%%%%%%%%%%%%
%%%%%%%%%%%%%%%%%%%%%%%%%%%%%%%%%%%%%%%%%%%%%%%%%%%%%%%%%%%%
%%%%%%%%%%%%%%%%%%%%%%%%%%%%%%%%%%%%%%%%%%%%%%%%%%%%%%%%%%%%  
 
In this section our aim is to prove Theorem \ref{main_small_sigma_persistence_in_C}, which shows that bore wave solutions persist for all $(g,A) \in \mathcal{C}_{+1} \cup \mathcal{C}_{-1}$ for small surface tension.  We will prove this using the machinery of geometric singular perturbation theory, as initially developed by Fenichel \cite{FENICHEL197953}.  See also the survey of Jones \cite{Jones1995}.

To begin we recall some definitions.

\begin{definition}[Critical graphs and their reduced equations]
\label{ss_critical_graph_definition}
    Suppose that $p : \mathbb{R}^n \times \mathbb{R} \times \mathbb{R} \to \mathbb{R}^n$ and   $q : \mathbb{R}^n \times \mathbb{R} \times \mathbb{R} \to \mathbb{R}$ are smooth, and consider the differential system given by 
    \begin{equation}\label{ss_general_system}
    \begin{cases}
        x'= p(x,y,\varepsilon) \\ 
        \varepsilon y'=q(x,y,\varepsilon).     
\end{cases}
\end{equation}
The variable $x$ is called the slow variable, while $y$ is called the fast variable.    We define the following.  
\begin{enumerate}
 \item  Suppose that $V \subseteq \mathbb{R}^n$ is open and $h_0: V \to \mathbb{R}$ is smooth.  We say $h_0$ defines a critical graph for the system \eqref{ss_general_system} if  $q(x,h_0(x),0)=0$ for every $x\in V$.   We then say that $h_0$ induces the reduced differential equation 
\begin{equation}\label{ss_general_reduced_equation}
        x'=p(x,h_0(x),0)
\end{equation}
    over $V$.   We say that the graph of $h_0$ is normally hyperbolic over a compact set $K\subset V$ if $\partial_y q(x,h_0(x),0)\neq0$ for every $x\in K$.  We say it is normally attracting if this derivative is negative.
    
 \item Suppose $\varepsilon \neq 0$ and $h_\varepsilon : V_\varepsilon \to \mathbb{R}$ is a $C^1$ function on an open set $V_\varepsilon \subseteq \mathbb{R}^n$.  We say its graph $y = h_\varepsilon(x)$ is locally invariant for \eqref{ss_general_system} on $V_\varepsilon$  if the vector field $(p(\cdot,h_\varepsilon(\cdot),\varepsilon), \varepsilon^{-1} q(\cdot,h_\varepsilon(\cdot),\varepsilon))$ is tangent to its graph over $V_\varepsilon$.  It is a simple matter to check that this is equivalent to the identity
    \begin{equation}\label{ss_general_invariance_identity}
        \varepsilon \nabla h_\varepsilon(x)\cdot  p(x,h_\varepsilon(x),\varepsilon)  =q(x,h_\varepsilon(x),\varepsilon) \text{ for } x \in V_\varepsilon.
\end{equation}
    When this holds, solutions of the ODE $x'=p(x,h_\varepsilon(x),\varepsilon)$ lift to solutions
    $(x,h_\varepsilon(x))$ of \eqref{ss_general_system}.
\end{enumerate}
\end{definition}

The  next theorem is a simple scalar graphical version of the much more general first theorem of Fenichel \cite{FENICHEL197953}.  Our formulation is similar to those found in Section 1.2 of \cite{Jones1995}, Proposition 2.1 of \cite{EoEunKangOh2024}, and Theorem 2 of \cite{Hek2010}.

\begin{thm}\label{ss_graph_persistence}
In the setting of the first item of Definition \ref{ss_critical_graph_definition}, suppose that $B \subset V$ is a closed ball and that there exists $\beta>0$ such that 
\begin{equation}\label{ss_general_fast_bound}
        \partial_y q(x,h_0(x),0)\le-\beta     \text{ for all }x\in B.
\end{equation}
For each  $1 \le r \in \mathbb{N}$, there exist  $\varepsilon_0>0$, an open neighborhood $V_B$ of $B$, and a map $h\in C^r(V_B \times(-\varepsilon_0,\varepsilon_0))$ such that $h(x,0)=h_0(x)$, the graphs of $h_\varepsilon =h(\cdot,\varepsilon)$ over $V_B$ satisfy    \eqref{ss_general_invariance_identity} for all  $0<|\varepsilon|<\varepsilon_0$, and $h_\varepsilon \to h_0$ in $C^r(B)$ as $\varepsilon \to 0$.
\end{thm}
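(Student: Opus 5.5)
The plan is to reduce Theorem \ref{ss_graph_persistence} to Fenichel's first theorem \cite{FENICHEL197953} by a cutoff that turns the local, non-compact critical graph into a globally attracting invariant manifold. The alternative is a direct contraction argument for the invariance identity \eqref{ss_general_invariance_identity}, and I will take that route, since it is self-contained in the scalar graphical setting.

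First, I pass to the fast time $\tau=t/\varepsilon$. System \eqref{ss_general_system} becomes
\begin{equation*}
\dot x=\varepsilon p(x,y,\varepsilon),\qquad \dot y=q(x,y,\varepsilon),\qquad \dot\varepsilon=0 .
\end{equation*}
At $\varepsilon=0$ the graph $y=h_0(x)$ consists of equilibria, and by \eqref{ss_general_fast_bound} it is normally attracting with rate at least $\beta$ over $B$. By continuity, $\partial_y q(x,y,\varepsilon)\le-\beta/2$ for $x$ in a slightly larger closed ball $B'\subset V$, $|y-h_0(x)|\le\delta$, and $|\varepsilon|\le\varepsilon_1$.

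Next I globalize. Choose a smooth cutoff $\chi$ equal to $1$ near $B'$ and supported in $V$. Replace $p$ by $\chi p$, and replace $q$ by
\begin{equation*}
\tilde q(x,y,\varepsilon)=\chi(x)\,q(x,y,\varepsilon)-(1-\chi(x))\,\beta\,(y-h_0(x)),
\end{equation*}
after also extending $h_0$ smoothly to all of $\mathbb{R}^n$. I then saturate in $y$ outside the $\delta$-tube so that $\partial_y\tilde q\le-\beta/2$ holds globally. The modified system has compactly supported slow dynamics, a globally attracting critical graph, and agrees with the original on a neighborhood $V_B$ of $B$.

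For the modified system I look for $h_\varepsilon=h_0+w$ solving $\varepsilon\nabla h_\varepsilon\cdot p=\tilde q$. This is a first-order linear-in-gradient PDE, solved along characteristics of the slow flow $x'=p(x,h_\varepsilon(x),\varepsilon)$. Equivalently, a point lies on the graph if and only if its backward fast orbit remains bounded. This gives the Lyapunov--Perron fixed point
\begin{equation*}
w(x)=\int_{-\infty}^0 e^{\int_s^0 a\,d\sigma}\,(\text{nonlinear and }\varepsilon\text{ terms})\,ds ,
\end{equation*}
where $a\le-\beta/2$ is the linearized fast rate. The slow flow moves at speed $O(\varepsilon)$ in fast time, so the spectral gap condition holds with room to spare: the slow Lipschitz rate is $O(\varepsilon)$ while the normal contraction is $\beta/2$. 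The graph-transform map is therefore a contraction on Lipschitz graphs with $\|w\|_{C^0}=O(\varepsilon)$. The same argument applied to the derivative equations (fiber contraction) gives $C^r$ regularity in $x$, provided $r\,O(\varepsilon)<\beta/2$; this is why $\varepsilon_0$ depends on $r$. Including $\varepsilon$ as a state variable with $\dot\varepsilon=0$ gives joint $C^r$ dependence on $(x,\varepsilon)$. Since the fixed point at $\varepsilon=0$ is $w=0$, we get $h(x,0)=h_0(x)$, and continuity in $\varepsilon$ gives $h_\varepsilon\to h_0$ in $C^r(B)$. Negative $\varepsilon$ is handled identically, using forward instead of backward time.

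The main obstacle is the $C^r$ smoothness in the joint variables, which requires the fiber-contraction theorem applied inductively to the $r$-jet. My first attempt is to cite Fenichel's theorem (or \cite{Jones1995}, Section 1.2) for the cutoff compact-with-boundary system: overflowing invariance is arranged by the cutoff, which makes the tube forward-invariant, and the cited result then supplies smoothness directly. Locality follows because the modified and original vector fields coincide over $V_B$, and \eqref{ss_general_invariance_identity} is a pointwise identity there.
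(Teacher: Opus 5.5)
Your proposal ends where the paper's proof does. The joint $C^r$ regularity, which you rightly identify as the real work, is obtained by citing Fenichel (or Jones). The paper gets there without any globalization:
\begin{itemize}
\item it enlarges $B$ to a closed ball $K$ with $B\subset K^\circ\subset K\subset V$ on which \eqref{ss_general_fast_bound} holds with $\beta/2$;
\item it applies Fenichel's Theorem 9.1 directly to the fast field $(\varepsilon p,q)$ near the compact critical graph over $K$;
\item it reads off the graph $y=h(x,\varepsilon)$ from Fenichel's graph representation and sets $V_B=K^\circ$.
\end{itemize}
Theorem 9.1 is designed for compact critical manifolds with boundary and yields local invariance, so nothing has to be arranged by hand. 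The convergence $h_\varepsilon\to h_0$ in $C^r(B)$ then follows from joint $C^r$ regularity and compactness of $B$.

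Your cutoff-and-saturate construction, followed by the Lyapunov--Perron characterization of the graph via backward-bounded fast orbits, is a legitimate alternative. Carried out, it would give a self-contained proof of a Lipschitz invariant graph with $\|h_\varepsilon-h_0\|_{C^0}=O(\varepsilon)$ satisfying \eqref{ss_general_invariance_identity} on $V_B$. But you defer the fiber-contraction step to the same citation, so the detour does not remove the dependence on Fenichel. And once you cite his singular-perturbation theorem, the cutoff is unnecessary.

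If you keep the self-contained sketch, four points need repair.
\begin{itemize}
\item \emph{Support of the cutoff.} It must live where the fast bound is known: take $\chi=1$ near $B$ with $\operatorname{supp}\chi\subset(B')^\circ$, not $\chi=1$ near $B'$. Otherwise $\partial_y\tilde q=\chi\,\partial_y q-(1-\chi)\beta$ is uncontrolled on $\operatorname{supp}\chi\setminus B'$. You should also cut $p$ off in $y$ so its Lipschitz constants are global.
\item \emph{Overflowing invariance.} This is a condition on the tangential flow at the boundary of the manifold, not forward invariance of the normal tube. After your cutoff the slow field vanishes outside a compact set. What applies is then a persistence result for a globally defined graph with compactly supported perturbation (your own Lyapunov--Perron argument, or periodizing in $x$), not the overflowing theorem.
\item \emph{Negative $\varepsilon$.} In fast time the normal direction attracts for both signs of $\varepsilon$, so backward fast time characterizes the graph for $\varepsilon<0$ as well. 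The reversal you describe happens only in slow time.
\item \emph{Joint regularity in $\varepsilon$.} Once $\varepsilon$ is adjoined as a state variable, the tangential linearization contains the $O(1)$ nilpotent entry $\partial_\varepsilon(\varepsilon p)=p+\varepsilon\,\partial_\varepsilon p$. Tangential growth is therefore $O(\varepsilon)$-exponential only up to polynomial factors. The rate condition for $C^r$ still holds, but not merely because the slow Lipschitz rate is $O(\varepsilon)$.
\end{itemize}
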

\begin{proof}
By continuity, we may select a closed ball $K$ such that $B \subset K^\circ \subset K \subset V$ and the estimate \eqref{ss_general_fast_bound} holds with $\beta$ replaced by $\beta/2$ and $B$ replaced by $K$.  We then  apply Theorem 9.1  of \cite{FENICHEL197953} to the fast vector field $(x,y)\mapsto(\varepsilon p(x,y,\varepsilon),q(x,y,\varepsilon))$ in the vicinity of the compact critical graph $\{(x,h_0(x)):x\in K\}$, which possesses a single normal eigenvalue $\partial_yq(x,h_0(x),0)\le-\beta/2$.  We then invoke the graph representation described in Section X of \cite{FENICHEL197953} to obtain a locally invariant $C^r$ graph  $y=h(x,\varepsilon)$, defined for $x\in K^\circ$ and sufficiently small $|\varepsilon|$, and satisfying  $h(x,0)=h_0(x)$.  We then set $V_B=K^\circ$ and note that local invariance yields \eqref{ss_general_invariance_identity}, while the $C^r$ regularity of $h$ and the compactness of $B$ prove the stated convergence.
\end{proof}

With Theorem \ref{ss_graph_persistence} in hand, we can now present the

\begin{proof}[Proof of Theorem~\ref{main_small_sigma_persistence_in_C}]

We divide the proof into five steps.

\emph{Step 1: Construction of an invariant graph.}  To begin, we define the smooth functions $h_0 : \mathbb{R}^2 \to \mathbb{R}$ and  $b : \mathbb{R} \to \mathbb{R}$ via $h_0(x)=F(x_1)-G(x_1)x_2$ and $b(x_1)=4\mu A e^{-2x_1}$, where we recall that $F,G$ are defined in \eqref{ss_coefficients}.   In turn, we define the smooth maps    $P : \mathbb{R}^2 \times \mathbb{R} \times \mathbb{R} \to \mathbb{R}^2$, and $Q : \mathbb{R}^2 \times \mathbb{R} \times \mathbb{R} \to \mathbb{R}$ via     
\begin{equation}
 P(x,y,\sigma) = (x_2,y) \text{ and } Q(x,y,\sigma)= b(x_1)\bigl(h_0(x)-y\bigr) -\sigma\bigl(x_2^3+3x_2y\bigr).
\end{equation}
Making the substitution  $x=(\rho,\rho')$ and $y=\rho''$, it is a simple matter to check that for $\sigma >0$ the ODE \eqref{ss_profile_equation} is equivalent to the system
\begin{equation}\label{ss_full_system}
        x'=P(x,y,\sigma) \text{ and } \sigma y'=Q(x,y,\sigma).
\end{equation}
Select a closed ball $B \subset \mathbb{R}^2$ such that $\Gamma_0 \subset B^\circ$, where $\Gamma_0$ is the compact set from  Theorem \ref{main_no_st_bores} that contains the $\sigma =0$ heteroclinic orbit.  Then the critical set over $B$ is the graph $y=h_0(x)$, and we have the estimate 
\begin{equation}\label{ss_fast_eigenvalue}
        \partial_yQ(x,h_0(x),0)=-b(x_1) \le -\min_{z\in B}b(z_1)<0 \text{ for all }x \in B.
\end{equation}
We may thus apply Theorem \ref{ss_graph_persistence} with $r=2$ to produce  $\sigma_0>0$, an open neighborhood $V_B \supset B$, and a map $h\in C^2(V_B \times(-\sigma_0,\sigma_0))$ such that $h(x,0)=h_0(x)$, the graphs of $h_\sigma =h(\cdot,\sigma)$ satisfy
\begin{equation}\label{ss_invariance_identity}
        \sigma \nabla h_\sigma(x)\cdot  P(x,h_\sigma(x),\sigma)  =Q(x,h_\sigma(x),\sigma) \text{ for } x \in V_B \text{ and } 0<|\sigma|<\sigma_0,
\end{equation}
and $h_\sigma \to h_0$ in $C^2(B)$ as $\sigma \to 0$.  In what follows we will employ the vector field $X_\sigma : V_B \to \mathbb{R}^2$ defined by
\begin{equation}\label{ss_perturbed_planar_field}
        X_\sigma(x)= P(x,h_\sigma(x),\sigma)=  (x_2,h_\sigma(x)),
\end{equation}
which is a  $C^2$ perturbation over $B$ of the vector field $X_0$ defined in Theorem \ref{main_no_st_bores}.

\emph{Step 2: Identification of the equilibria.}   Theorem \ref{main_no_st_bores} guarantees that the matrices  $DX_0(\rho_\pm,0)$ are invertible, so the implicit function theorem provides $0 < \sigma_1 \le \sigma_0$ and $C^2$ maps $(-\sigma_1,\sigma_1) \ni \sigma \mapsto z_{\pm,\sigma} \in \mathbb{R}^2$ such that $z_{\pm,\sigma}$ are the unique zeros of $X_\sigma$ in disjoint open neighborhoods of $(\rho_\pm,0)$.  Employing \eqref{ss_perturbed_planar_field} and \eqref{ss_invariance_identity} in conjunction then shows that $e_2 \cdot z_{\pm,\sigma}=0$, $h_\sigma(z_{\pm,\sigma}) =0$, and $0 = Q(z_{\pm,\sigma}, 0, \sigma) = b(e_1 \cdot z_{\pm,\sigma}) F(e_1 \cdot z_{\pm,\sigma})$.  Since $b>0$ and the unique zeros of $F$ are $\rho_\pm$, we deduce that 
\begin{equation}\label{ss_fixed_equilibria}
        z_{\pm,\sigma}=(\rho_\pm,0) \text{ and } h_\sigma(\rho_\pm,0)=0 \text{ for all } |\sigma| < \sigma_1.
\end{equation}
 By continuity, after decreasing $\sigma_1$ if necessary, both equilibria remain hyperbolic, with the same numbers of stable and unstable eigenvalues as for $X_0$.

\emph{Step 3: A continuous choice of the outgoing branch.} Next, we define the vector field $Y_\sigma = \iota X_\sigma$. With this renormalization, $Y_\sigma$ always has a saddle equilibrium at $(\rho_-,0)$ and a sink at $(\rho_+,0)$. Letting $x_0$ be the trajectory from Theorem \ref{main_no_st_bores} and setting $u_0(t)=x_0(\iota t)$, we find that $u_0$ is a heteroclinic orbit for the vector field $Y_0$ that goes from $(\rho_-,0)$ to $(\rho_+,0)$.

The parameter-dependent unstable manifold theorem (see, for instance, Theorems 9.4--9.6 of  \cite{Teschl2012}) provides $\delta>0$, $0<\sigma_2<\sigma_1$, an open neighborhood $U$ of $(\rho_-,0)$ with $\overline U\subset B^\circ$, and a $C^1$ map $\gamma:(-\delta,\delta)\times(-\sigma_2,\sigma_2)\to U$ such that $\gamma(\cdot,\sigma)$ is an injective parameterization of a local unstable curve and $\gamma(0,\sigma)=(\rho_-,0)$. The theorem also allows $\delta$, $\sigma_2$, and $U$ to be chosen so that the ODE trajectories starting on these curves are defined for all time $t \le 0$, remain in $U$, and converge to $(\rho_-,0)$ as $t \to -\infty$.

Translating $u_0$ in time if necessary, we may assume that  $u_0((-\infty,0]) \subset \gamma((-\delta,\delta),0)$.  This then allows us to choose  $\theta_0\in(-\delta,\delta)\setminus\{0\}$ such that  $u_0(0)=\gamma(\theta_0,0)$.  Using this, we then define
\begin{equation}
    z_\sigma=\gamma(\theta_0,\sigma)
\end{equation}
and note that, by construction,  $z_\sigma\neq(\rho_-,0)$, $z_\sigma$ depends continuously on $\sigma$, and $z_0=u_0(0)$. Let $u_\sigma$ be the solution of $u_\sigma'=Y_\sigma(u_\sigma)$ with $u_\sigma(0)=z_\sigma$. Then the conclusions of the unstable manifold theorem stated above guarantee that
\begin{equation}\label{ss_backward_limit}
    u_\sigma((-\infty,0] ) \subset U \text{ and } \lim_{t\to-\infty}u_\sigma(t)=(\rho_-,0).
\end{equation}

\emph{Step 4: Forward convergence to the sink.} Write $M=DY_0(\rho_+,0)$ and note that since all eigenvalues of $M$ have negative real parts, the integral
\begin{equation}
       S=\int_0^\infty \exp(sM^{\mathsf{T}}) \exp(sM) ds
\end{equation}
is well-defined and defines a  symmetric positive definite matrix.  Integrating the derivative of $e^{sM^{\mathsf{T}}}e^{sM}$ over $s \in (0,\infty)$ provides the identity
\begin{equation}\label{ss_lyapunov_matrix}
        M^{\mathsf{T}}S+SM=-I.
\end{equation}
Write   $Y_\sigma((\rho_+,0)+w)=Mw+R_\sigma(w)$ and note that since $Y_\sigma(\rho_+,0)=0$ and $DY_\sigma(x)\to M$ as $(x,\sigma)\to((\rho_+,0),0)$, we may choose $r_+>0$ and $0<\sigma_3<\sigma_2$ such  that $\overline{B((\rho_+,0), r_+)} \subset B^\circ$ and
    \begin{equation}\label{ss_sink_remainder}
        |R_\sigma(w)|\le\frac{|w|}{4\|S\|} \text{ for all } |w|\le r_+ \text{ and } |\sigma|<\sigma_3,
\end{equation}
where   $\|S\|$ denotes the Euclidean operator norm of $S$.  For a solution with $w=u_\sigma-(\rho_+,0) \in \overline{B(0, r_+)}$, \eqref{ss_lyapunov_matrix} and \eqref{ss_sink_remainder} imply that 
    \begin{equation}\label{ss_sink_lyapunov_decay}
            ( Sw \cdot w)' = -|w|^2+2 SR_\sigma(w) \cdot w  \le-\tfrac12|w|^2 \le-\frac{1}{2\|S\|}Sw\cdot w.
\end{equation}
Now choose $c>0$ small enough that 
    \begin{equation}
        E = \{(\rho_+,0)+w: Sw \cdot w \le c\} \subset B((\rho_+,0), r_+).
\end{equation}
Then \eqref{ss_sink_lyapunov_decay} shows that $E$ is positively invariant for every $|\sigma|<\sigma_3$, and that all solutions starting in $E$ exist for all positive time and converge to $(\rho_+,0)$ as $t \to \infty$.

Since $u_0(t)\to(\rho_+,0)$ as $t \to \infty$, we may choose $T>0$ with $u_0(T)\in E^\circ$.  The compact set $u_0([0,T])$ lies in $B^\circ$.  Then continuous dependence on the vector field and initial data, together with the convergence $z_\sigma\to z_0$ as $\sigma \to 0$, provides a $0<\sigma_4<\sigma_3$ such that if $|\sigma|<\sigma_4$ then the solution $u_\sigma$ exists on $[0,T]$ and satisfies $u_\sigma([0,T]) \subset B^\circ$ as well as $u_\sigma(T)\in E^\circ$.  It therefore extends to every $t\ge T$, stays in $E$ during this time, and satisfies
    \begin{equation}\label{ss_forward_limit}
        \lim_{t\to+\infty}u_\sigma(t)=(\rho_+,0).
\end{equation}
Combining this with results of Step 3 shows that the entire heteroclinic trajectory $u_\sigma$ is contained in $B^\circ$.

\emph{Step 5: Lifting to the profile equation and continuous dependence.} We now choose $0<\sigma_\ast<\sigma_4$ and restrict to $0\le\sigma\le\sigma_\ast$. Write $x_\sigma(t)=u_\sigma(\iota t)$ and  $\rho_\sigma(t)=e_1\cdot x_\sigma(t)$, and note that since $Y_\sigma=\iota X_\sigma$ and $\iota^2=1$, we have $x_\sigma'=X_\sigma(x_\sigma)$. In particular,
\begin{equation}\label{ss_profile_derivative_identities}
\rho_\sigma'=e_2\cdot x_\sigma \text{ and } \rho_\sigma''=h_\sigma(x_\sigma).
\end{equation}
For $\sigma>0$, the invariance identity \eqref{ss_invariance_identity} shows that $(x_\sigma,h_\sigma(x_\sigma))$ solves \eqref{ss_full_system} globally in time, and hence  $\rho_\sigma$ solves \eqref{ss_profile_equation}.  At $\sigma=0$, the identity $h(\cdot,0)=h_0$ gives \eqref{ss_reduced_scalar_equation}. Each $\rho_\sigma$ is smooth thanks to the ODE it satisfies.

Combining \eqref{ss_backward_limit}, \eqref{ss_forward_limit}, \eqref{ss_fixed_equilibria}, and \eqref{ss_profile_derivative_identities} shows that
\begin{equation}
        \lim_{s\to\pm\infty} \bigl(\rho_\sigma(\iota s), \rho_\sigma'(\iota s), \rho_\sigma''(\iota s)\bigr) =(\rho_\pm,0,0).
\end{equation}
The phase trajectory of each $\rho_\sigma$ has compact closure, and upon repeatedly differentiating the ODE to solve for higher order derivatives in terms of lower order ones we readily deduce that  $\rho_\sigma\in C_b^\infty(\mathbb{R})$.

Finally, the choice $z_\sigma=\gamma(\theta_0,\sigma)$ makes the initial data continuous in $\sigma$, and so continuous dependence for the ODE establishes the continuity of $\sigma\mapsto x_\sigma$ in $C^0_b(I)$ for every compact interval $I\subset\mathbb{R}$.  The two identities in \eqref{ss_profile_derivative_identities} and the continuity of $h$ then prove the continuity of $\sigma\mapsto\rho_\sigma$ in $C^2(I)$ for each such $I$, including at $\sigma=0$.  To upgrade this continuity, we rewrite \eqref{ss_profile_equation} as
\begin{equation}\label{cont_promote}
    \sigma\rho_\sigma'''+\alpha_\sigma\rho_\sigma''= b(\rho_\sigma)\bigl(F(\rho_\sigma)-G(\rho_\sigma)\rho_\sigma'\bigr)-\sigma(\rho_\sigma')^3, \text{ where } \alpha_\sigma=b(\rho_\sigma)+3\sigma\rho_\sigma'.
\end{equation}
Since $x_\sigma(\mathbb{R})\subset B$, decreasing $\sigma_\ast$ if necessary provides the lower bound $\alpha_\sigma(t) \ge \beta>0$ uniformly in $\sigma$ and $t\in \mathbb{R}$. Every bounded global solution of $\sigma v'+\alpha_\sigma v=f$ obeys the estimate $\|v\|_{C^0_b}\le\beta^{-1}\|f\|_{C^0_b}$; indeed, this is trivial when $\sigma =0$ and follows by multiplying by the integrating factor and integrating from $-\infty$ when $\sigma>0$. Differentiating \eqref{cont_promote} $n$ times gives an equation of this type for $v=\rho_\sigma^{(n+2)}$, with $f$ depending only on $\sigma$ and the derivatives of $\rho_\sigma$ through order $n+1$. The uniform boundedness of $\rho_\sigma,\rho_\sigma'$ and induction therefore show that
\begin{equation}
    \sup_{0\le\sigma\le\sigma_\ast}\|\rho_\sigma\|_{C_b^m(\mathbb{R})}<\infty    \text{ for every }m\in\mathbb{N}.
\end{equation}
These bounds, together with the Arzel\`a-Ascoli theorem and the established $C^2_b(I)$ continuity, then  imply continuity in $C^m_b(I)$ for every $m\in\mathbb{N}$ and every compact interval $I\subset \mathbb{R}$.  Hence, the map $\sigma\mapsto\rho_\sigma$ is continuous in $C^\infty(\mathbb{R})$.

Now set $H_\sigma=e^{\rho_\sigma}$. The equivalence of \eqref{ss_height_equation} and \eqref{ss_profile_equation}, including at $\sigma=0$, shows that $H_\sigma$ solves \eqref{ss_height_equation} for every $0\le\sigma\le\sigma_\ast$. The chain rule gives $H_\sigma\in C_b^\infty(\mathbb{R})$ and the limits \eqref{ss_main_limits}. Finally, composition with the exponential function is continuous on $C^\infty(\mathbb{R})$, so the continuity of $\sigma\mapsto\rho_\sigma$ proves the third item.
\end{proof}

%%%%%%%%%%%%%%%%%%%%%%%%%%%%%%%%%%%%%%%%%%%%%%%%%%%%%%%%%%%%
%%%%%%%%%%%%%%%%%%%%%%%%%%%%%%%%%%%%%%%%%%%%%%%%%%%%%%%%%%%%
%%%%%%%%%%%%%%%%%%%%%%%%%%%%%%%%%%%%%%%%%%%%%%%%%%%%%%%%%%%%
\section{Arbitrary capillarity and large gravity} \label{sec_large_gravity}
%%%%%%%%%%%%%%%%%%%%%%%%%%%%%%%%%%%%%%%%%%%%%%%%%%%%%%%%%%%%
%%%%%%%%%%%%%%%%%%%%%%%%%%%%%%%%%%%%%%%%%%%%%%%%%%%%%%%%%%%%
%%%%%%%%%%%%%%%%%%%%%%%%%%%%%%%%%%%%%%%%%%%%%%%%%%%%%%%%%%%%

Our goal in this section is to prove Theorem \ref{main_large_grav}, which shows that surging bore solutions exist for all surface tension coefficients $\sigma >0$, provided that gravity is sufficiently large.  Our proof adapts the techniques of Bonheure, Cid, De Coster, and Sanchez  \cite{bonheure}, combining a priori estimates and  Leray-Schauder degree theory.     The results of \cite{bonheure} concern equations of the form $u'''=f(u)+p(t)u'$ with prescribed nonnegative $p$, whereas \eqref{ss_profile_equation} contains additional terms involving the second derivative and nonlinear products of derivatives.  Our main task is then to adapt their methods to account for these terms.  We refer to the survey of Mawhin \cite{Mawhin1999} for an overview of the Leray-Schauder degree.

Since $\sigma >0$ we can rewrite \eqref{ss_profile_equation} in the equivalent forms 
\begin{equation}\label{lg_profile_equation}
    \rho'''=\frac{4\mu A}{\sigma}e^{-2\rho} \bigl(F(\rho)-G(\rho)\rho'-\rho''\bigr) -(\rho')^3-3\rho'\rho''
\end{equation}
and 
\begin{equation}\label{lg_equation_height_form}
    \rho''=F(\rho)-G(\rho)\rho' - \frac{\sigma}{4\mu A} e^\rho(e^\rho)''',
\end{equation}
both of which we will use in our analysis.  We begin by identifying the bounded nonconstant solutions.

\begin{thm}\label{bnded_is_hetero_orbit}
    Suppose that $\rho\in C_b^3(\mathbb{R})$ is a nonconstant solution of \eqref{lg_profile_equation}, and that there is a constant $c<0$ such that $G(\rho(t))\le c$ for every $t\in\mathbb{R}$. Then $\rho$ is a heteroclinic orbit satisfying
    \begin{equation}\label{lg_bounded_orbit_limits}
        \lim_{t\to-\infty}(\rho(t),\rho'(t),\rho''(t))=(\rho_+,0,0),
        \text{ and }
        \lim_{t\to\infty}(\rho(t),\rho'(t),\rho''(t))=(\rho_-,0,0).
\end{equation}
\end{thm}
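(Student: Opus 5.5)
The plan is to build a Lyapunov functional from the height form \eqref{lg_equation_height_form} and show it is monotone along bounded solutions. Set $k=\sigma/(4\mu A)>0$, $H=e^\rho$, and $\Phi(r)=\int_{\rho_-}^r F(s)\,ds$. Multiply \eqref{lg_equation_height_form} by $\rho'$ and use $H\rho'=H'$. The capillary term becomes
\[
-kHH'''\rho'=-kH'H'''=-k(H'H'')'+k(H'')^2 .
\]
This suggests the energy
\[
E=\tfrac12(\rho')^2-\Phi(\rho)+kH'H''.
\]
It satisfies
\[
E'=-G(\rho)(\rho')^2+k(H'')^2\ \ge\ |c|(\rho')^2+k(H'')^2\ \ge 0 .
\]
Since $\rho\in C^3_b(\mathbb{R})$, $E$ is bounded. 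Hence $E$ is nondecreasing with finite limits $E(\pm\infty)$, and $\int_{\mathbb{R}}\bigl((\rho')^2+(H'')^2\bigr)\,dt<\infty$.

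Next I would upgrade integrability to pointwise decay, Barbalat-style. Differentiating \eqref{lg_profile_equation} shows $\rho^{(4)}$ is bounded, so $\rho',\rho'',\rho'''$ and $H',H'',H'''$ are all bounded and uniformly continuous. An $L^2$, uniformly continuous function tends to zero, so $\rho'\to0$ and $H''\to0$ as $t\to\pm\infty$. A bounded function tending to zero whose derivative is uniformly continuous has derivative tending to zero (interpolation/Landau inequality on unit intervals). This gives $H'''\to0$. From $H''=H(\rho''+(\rho')^2)$ and $H\ge e^{\inf\rho}>0$ we get $\rho''\to0$.

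Solving \eqref{lg_equation_height_form} for $F(\rho)$ then gives $F(\rho(t))\to0$ at both ends. The $\omega$- and $\alpha$-limit sets of the bounded function $\rho$ are compact intervals contained in $F^{-1}(0)=\{\rho_-,\rho_+\}$, hence singletons. Thus $\rho(\pm\infty)$ exist in $\{\rho_-,\rho_+\}$, and $E(\pm\infty)=-\Phi(\rho(\pm\infty))$.

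It remains to fix the direction. Since $\rho$ is nonconstant, $\rho'\not\equiv0$ on some interval, where $E'>0$; so $E(\infty)>E(-\infty)$, i.e. $\Phi(\rho(-\infty))>\Phi(\rho(\infty))$. In particular the two limits differ. Now $\Phi(\rho_-)=0$, and $F>0$ on $(\rho_-,\rho_+)$ (e.g. $F(\log\tfrac12)=\tfrac{a}{\mu A}\cdot\tfrac{1-A}{2}>0$, and $F$ has no other zeros), so $\Phi(\rho_+)>0$. This forces $\rho(-\infty)=\rho_+$ and $\rho(\infty)=\rho_-$, which is \eqref{lg_bounded_orbit_limits}.

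The main obstacle is spotting the correct energy. The capillary term must be integrated by parts via $H\rho'=H'$ so that it contributes the good-signed $k(H'')^2$ rather than an indefinite cross term. The hypothesis $G\le c<0$ is exactly what makes the dissipation $-G(\rho')^2$ coercive. The decay step for $H'''$ needs bounded $\rho^{(4)}$, which comes from differentiating the ODE.
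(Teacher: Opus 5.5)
Your proposal is correct and follows the paper's route: the same energy $V(\rho)+\tfrac12(\rho')^2+\tfrac{\sigma}{4\mu A}(e^\rho)'(e^\rho)''$ with the same nonnegative derivative, $L^2$ integrability plus uniform continuity to get $\rho'\to0$ and $(e^\rho)''\to0$, a Barbalat-type step to kill the third-order term so that $F(\rho(t))\to0$, and the ordering $V(\rho_+)<V(\rho_-)$ to fix the direction. The differences are cosmetic: you apply the decay step to $H''$ (using bounded $\rho^{(4)}$) and read off $F(\rho)\to0$ from the height form, whereas the paper applies it to $\rho''$ and uses \eqref{lg_profile_equation}; you also spell out the connectedness argument showing that $\rho(\pm\infty)$ exist.
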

\begin{proof}
Define the smooth potential $V: \mathbb{R} \to \mathbb{R}$ associated to $F$ and the total energy $\mathcal{E} : \mathbb{R} \to \mathbb{R}$ of $\rho$ via 
\begin{equation}\label{lg_potential_energy}
        V(r)=-\int_{\rho_-}^rF(s)\,ds
        \text{ and }  \mathcal{E}(t)=V(\rho(t))+\frac{(\rho'(t))^2}{2} + \frac{\sigma}{4\mu A}(e^\rho)'(t)(e^\rho)''(t).
\end{equation}
Multiplying \eqref{lg_equation_height_form} by $\rho'$ provides the identity
\begin{equation}\label{lg_energy_calculation}
        \rho''\rho'=F(\rho)\rho'-G(\rho)(\rho')^2 -\frac{\sigma}{4\mu A}(e^\rho)'(e^\rho)''',
\end{equation}
from which we then deduce that 
\begin{equation}\label{lg_energy_identity}
     \mathcal{E}'(t)= -G(\rho(t))(\rho'(t))^2 +\frac{\sigma}{4\mu A}\bigl((e^\rho)''(t)\bigr)^2\ge0.
\end{equation}
In particular, since $\rho\in C_b^3(\mathbb{R})$  we have the bound 
\begin{equation}\label{lg_integrated_energy}
\sup_{T >0} \left[    \int_{-T}^T-G(\rho)(\rho')^2\,dt +\frac{\sigma}{4\mu A}\int_{-T}^T\bigl((e^\rho)''\bigr)^2\,dt \right] = \sup_{T >0} \left[\mathcal{E}(T)-\mathcal{E}(-T) \right] < \infty,
\end{equation}
and thus $\rho'\in L^2(\mathbb{R})$ and $(e^\rho)''\in L^2(\mathbb{R})$.  Again since $\rho\in C_b^3(\mathbb{R})$, we know that both $\rho'$ and $(e^\rho)''$ are uniformly continuous; it follows from this and the $L^2$ inclusions that $\rho'(t)\to0$ and $(e^\rho)''(t)\to0$ as $t\to\pm\infty$. Since $(e^\rho)''=e^\rho\bigl((\rho')^2+\rho''\bigr)$  and $\rho$ is bounded, we also must have that $\rho''(t)\to 0$ as $t \to \pm \infty$.  Next we appeal to the ODE \eqref{lg_profile_equation} to express  $\rho'''$ as a smooth function of the bounded, Lipschitz functions $\rho,\rho',\rho''$, which then allows us to deduce that $\rho'''$ is  uniformly continuous.  

Next, we recall an elementary analysis fact.  Suppose that a function $\varphi : (0,\infty) \to \mathbb{R}$ is differentiable with $\varphi'$ uniformly continuous.  If $\varphi(t) \to 0$ as $t \to \infty$, then $\varphi'(t) \to 0$ as $t \to \infty$ as well.  Indeed, if not then we can pick $\varepsilon >0$, $s \in \{-1,+1\}$,  and an increasing sequence $0 < t_n \to \infty$ such that $s \varphi'(t_n) \ge \varepsilon$ for all $n$.  Uniform continuity then grants a $\delta >0$ such that $s \varphi'(t) \ge \varepsilon/2$ for $t \in (t_n-\delta,t_n+\delta)$, which then contradicts the assumption that $\varphi(t) \to 0$ as $t \to \infty$.

Using the previous fact with the information that  $\rho''(t) \to 0$ as $t \to \pm \infty$ and $\rho'''$ is uniformly continuous, we deduce that  $\rho'''(t) \to 0$ as $t\to \pm \infty$ as well.  Returning to \eqref{lg_profile_equation} and sending $t \to \pm \infty$  then shows that $F(\rho(t))\to 0$ as $t \to \pm\infty$.  Since $F$ has only the distinct zeros $\rho_\pm$, we deduce from this that $\rho$ itself converges at $\pm \infty$ with 
\begin{equation}
 \lim_{t \to \pm \infty} \rho(t) \in \{\rho_-,\rho_+\}, \text{ and hence } \lim_{t \to \pm \infty} \mathcal{E}(t) = V\left( \lim_{t \to \pm \infty} \rho(t) \right).
\end{equation}
Since $\rho$ is not constant and $-G(\rho)\ge-c>0$, we may integrate the bound \eqref{lg_energy_identity} to deduce that 
\begin{equation}\label{lg_strict_energy_order}
\lim_{t\to \infty} \mathcal{E}(t) - \lim_{t \to -\infty} \mathcal{E}(t) >0, \text{ and hence }
 V\left( \lim_{t \to - \infty} \rho(t) \right)
        <  V\left( \lim_{t \to  \infty} \rho(t) \right).
\end{equation}
Since  $F>0$ on $(\rho_-,\rho_+)$, we have $V(\rho_+)<V(\rho_-)$, and this completes the proof of \eqref{lg_bounded_orbit_limits}.
\end{proof}

Next, we prove a simple lemma that introduces some parameters that will be important in the subsequent analysis.  It will be useful to recall that $\rho_\ast$ is defined by \eqref{ss_turning_value}.

\begin{lem}\label{lg_amplitude_barrier_lemma}
Set 
\begin{equation}\label{lg_midpoint}
    \rho_a=\frac{\rho_++\rho_-}{2} \in (\rho_-,\rho_+)
\end{equation}
and 
\begin{equation}\label{lg_amplitude_barrier_formulas}
    \alpha=2\rho_a-\rho_\ast=\log(A/4)-\rho_\ast.
\end{equation}
Then $\alpha < \rho_- < \rho_a < \rho_+ < \rho_\ast$, and $\alpha$ and $\rho_\ast$ are uniquely determined by the identities 
\begin{equation}\label{lg_amplitude_barriers}
    \int_{\rho_a}^{\rho_\ast}F(s)\,ds = - \int_{\rho_-}^{\rho_a} F(s)\,ds,
    \text{ and }
    \int_{\alpha}^{\rho_a} F(s)\,ds = - \int_{\rho_a}^{\rho_+}F(s)\,ds.
\end{equation}
Moreover, $G<0$ on $[\alpha,\rho_\ast]$ if and only if $g>A^2e^{-3\alpha}=64 e^{3 \rho_\ast}/A$.
\end{lem}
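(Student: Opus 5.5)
The key structural fact I plan to use is a reflection symmetry of $F$ about $\rho_a$. Write
\[
F(r) = \frac{a}{\mu A}\Bigl(1 - e^r - \tfrac{A}{4}e^{-r}\Bigr).
\]
Since $H_+H_- = A/4$, we get $2\rho_a = \rho_+ + \rho_- = \log(A/4)$. This gives the second formula for $\alpha$ in \eqref{lg_amplitude_barrier_formulas}. Substituting $r = 2\rho_a - s$ sends $e^r$ to $(A/4)e^{-s}$ and $(A/4)e^{-r}$ to $e^s$, so $F(2\rho_a - s) = F(s)$. Thus $F$ is even about $\rho_a$.

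\textbf{Ordering.} Next I record the sign structure of $F$. The function $e^r + (A/4)e^{-r}$ is strictly convex with minimum at $\rho_a$. Hence $F$ is strictly increasing on $(-\infty,\rho_a]$ and strictly decreasing on $[\rho_a,\infty)$. It follows that $F>0$ on $(\rho_-,\rho_+)$, $F<0$ outside $[\rho_-,\rho_+]$, and $F\to-\infty$ at $\pm\infty$. The relation $\rho_-<\rho_a<\rho_+$ is immediate. The inequality $\rho_\ast>\rho_+$ is given in Theorem \ref{main_no_st_bores}. Reflection then gives $\alpha = 2\rho_a-\rho_\ast < 2\rho_a-\rho_+ = \rho_-$.

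\textbf{The identities \eqref{lg_amplitude_barriers} and uniqueness.} The first identity is just $\int_{\rho_-}^{\rho_\ast}F=0$ split at $\rho_a$. For the second, I substitute $s\mapsto 2\rho_a-s$. This converts $\int_\alpha^{\rho_a}F$ into $\int_{\rho_a}^{\rho_\ast}F$, and converts $\int_{\rho_a}^{\rho_+}F$ into $\int_{\rho_-}^{\rho_a}F$, so the second identity reduces to the first. For uniqueness, consider $\phi(x)=\int_{\rho_a}^xF$. It is strictly increasing on $[\rho_a,\rho_+]$ and strictly decreasing to $-\infty$ on $[\rho_+,\infty)$. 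The target value $-\int_{\rho_-}^{\rho_a}F$ is negative, so $\phi$ attains it at exactly one point $x>\rho_+$. The symmetric argument on $(-\infty,\rho_a]$ shows $\alpha$ is the unique solution of the second identity.

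\textbf{The criterion for $G<0$.} Observe that $G(r)<0$ if and only if $Ae^{-r} < (g/A)e^{2r}$, that is, $g > A^2e^{-3r}$. The right-hand side is strictly decreasing in $r$. Therefore $G<0$ on all of $[\alpha,\rho_\ast]$ if and only if the inequality holds at the left endpoint, i.e.\ $g > A^2e^{-3\alpha}$. Finally,
\[
e^{-3\alpha} = e^{3\rho_\ast}(A/4)^{-3} = 64\,e^{3\rho_\ast}/A^3,
\]
so $A^2 e^{-3\alpha} = 64\,e^{3\rho_\ast}/A$.

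No step is a real obstacle here. The only point requiring care is noticing the reflection symmetry of $F$ about $\rho_a$, which turns the statement about $\alpha$ into the one about $\rho_\ast$.
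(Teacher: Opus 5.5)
Your proposal is correct and follows the paper's proof essentially step by step: the reflection identity $F(2\rho_a-r)=F(r)$ (from $e^{\rho_-}e^{\rho_+}=A/4$) gives $\alpha<\rho_-$ from $\rho_\ast>\rho_+$ and reduces the second identity to $\int_{\rho_-}^{\rho_\ast}F=0$, and the criterion for $G<0$ is the same direct monotonicity computation. Your uniqueness argument via the sign structure of $x\mapsto\int_{\rho_a}^x F$ is a slightly more detailed version of the paper's remark that $r\mapsto\int_r^{\rho_+}F$ is strictly increasing on $(-\infty,\rho_-)$. Like the paper, you implicitly restrict to the correct side of $\rho_a$, which is needed because $\rho_-$ and $\rho_+$ trivially satisfy the first and second identities, respectively.
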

\begin{proof}
The identities in \eqref{lg_amplitude_barriers} are equivalent to
\begin{equation}
    \int_{\rho_-}^{\rho_\ast}F(s)\,ds=0   \text{ and }    \int_{\alpha}^{\rho_+}F(s)\,ds=0.
\end{equation}
The former is what uniquely defines $\rho_\ast$ in  \eqref{ss_turning_value}, so we only need to study the latter.

We have that  $e^{\rho_-}e^{\rho_+}=A/4$ and $2\rho_a=\log(A/4)$, and hence
\begin{equation}\label{lg_reflection_identity}
    F(2\rho_a-r)=F(r) \text{ for every }r\in\mathbb R.
\end{equation}
Since $\rho_\ast>\rho_+$, we have $\alpha<2\rho_a-\rho_+=\rho_-$. Changing variables using \eqref{lg_reflection_identity} yields
\begin{equation}
    \int_{\alpha}^{\rho_+}F(s)\,ds=\int_{\rho_-}^{\rho_\ast}F(s)\,ds=0,
\end{equation}
which shows that $\alpha$ indeed satisfies the desired integral identity.   Uniqueness follows since $F<0$ on $(-\infty,\rho_-)$, so the map $r\mapsto\int_r^{\rho_+}F(s)\,ds$ is strictly increasing there.  Finally, the assertions about $G$ follow from direct calculation.
\end{proof}

The next lemma constructs a crucial continuous affine map $K$.

\begin{lem}\label{lg_affine_inverse}
Let $1 \le n \in \mathbb{N}$.  The operator $K:C^0([-n,n])\to C^3([-n,n])$ defined by $Kh = \rho$ for $\rho$  satisfying $\rho'''=h$, $\rho'(\pm n)=0$, and $\rho(0)=\rho_a$ is well-defined, continuous, and affine. More precisely, $K(h)=\rho_a+K_T h$, where $K_T:C^0([-n,n])\to C^3([-n,n])$ is bounded and linear. Regarded as a map into $C^2([-n,n])$, $K$ maps bounded sets to relatively compact sets.
\end{lem}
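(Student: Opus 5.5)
We will write down $K$ explicitly and read off all three properties from the formula. Given $h\in C^0([-n,n])$, the function $\psi=\rho'$ must satisfy $\psi''=h$ with $\psi(\pm n)=0$, which is a Dirichlet problem on $[-n,n]$. Its unique solution is
\[
\psi(t)=\int_{-n}^{n} \mathcal{G}(t,s)h(s)\,ds,
\]
where $\mathcal{G}(t,s)=-\frac{(n-\max\{t,s\})(n+\min\{t,s\})}{2n}$ is the Green's function of $d^2/dt^2$ with Dirichlet conditions. Uniqueness holds because the only affine function vanishing at $\pm n$ is zero. We then set
\[
(K_Th)(t)=\int_0^t \psi(\tau)\,d\tau, \qquad Kh=\rho_a+K_Th.
\]
By construction $\rho=Kh$ satisfies $\rho'''=\psi''=h$, $\rho'(\pm n)=0$, and $\rho(0)=\rho_a$.

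For well-definedness we also need uniqueness of $\rho$. The difference of two solutions has vanishing third derivative, so it is a quadratic polynomial. Its derivative is affine and vanishes at $\pm n$, so the derivative is identically zero. The difference is therefore constant, and since it vanishes at $0$ it is zero. Linearity of $K_T$ is clear from the formula, so $K$ is affine.

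For boundedness we estimate derivatives directly. Since $|\mathcal{G}|\le n/2$, we get $\|\psi\|_{C^0}\le n^2\|h\|_{C^0}$. By Rolle, $\psi'$ vanishes somewhere in $(-n,n)$, so integrating $\psi''=h$ from that point gives $\|\psi'\|_{C^0}\le 2n\|h\|_{C^0}$. Also $\|\psi''\|_{C^0}=\|h\|_{C^0}$, and $\|K_Th\|_{C^0}\le n\|\psi\|_{C^0}$. Together these give $\|K_Th\|_{C^3}\le C_n\|h\|_{C^0}$. So $K_T$ is bounded into $C^3$, and $K$ is continuous.

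For compactness, let $\mathcal{B}\subset C^0$ be bounded. Then $K_T(\mathcal{B})$ is bounded in $C^3([-n,n])$. Hence the functions $\rho,\rho',\rho''$ with $\rho\in K(\mathcal{B})$ are uniformly bounded, and they are equi-Lipschitz because their derivatives up to order three are bounded. The Arzel\`a--Ascoli theorem, applied successively to $\rho''$, $\rho'$, and $\rho$ along a common subsequence, shows that every sequence in $K(\mathcal{B})$ has a subsequence converging in $C^2([-n,n])$. Thus $K(\mathcal{B})$ is relatively compact there, i.e.\ the inclusion $C^3\hookrightarrow C^2$ is compact and $K$ composed with it maps bounded sets to relatively compact sets.

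None of these steps is a real obstacle. The only point needing care is checking that the boundary conditions are imposed on $\rho'$ rather than $\rho$, so that the problem reduces to a Dirichlet problem for $\psi$ and is uniquely solvable.
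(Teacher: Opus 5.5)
Your proof is correct and follows essentially the same route as the paper. You write down an explicit solution formula, using the Dirichlet Green's function for $\rho'$ where the paper solves for the coefficients in $J_h + c_2t^2 + c_1t + c_0$; you get uniqueness because the homogeneous problem forces $\rho$ to be constant, prove a direct $C^3$ bound, and obtain compactness from the embedding $C^3([-n,n]) \hookrightarrow C^2([-n,n])$.
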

\begin{proof}
Given $h \in C^0([-n,n])$ write  
\begin{equation}\label{lg_triple_primitive}
        J_h(t)=\int_0^t\int_0^s\int_0^r h(q)\,dq\,dr\,ds.
\end{equation}
Every solution of $\rho'''=h$ has the form $\rho(t)=J_h(t)+c_2t^2+c_1t+c_0$.  The condition at $0$ implies $c_0=\rho_a$, while the $\pm n$ conditions require
\begin{equation}\label{lg_inverse_coefficients}
        c_2=\frac{J_h'(-n)-J_h'(n)}{4n} \text{ and } c_1=-\frac{J_h'(n)+J_h'(-n)}{2}.
\end{equation}
Thus,
\begin{equation}\label{lg_inverse_formula}
        K(h)(t)=J_h(t)+ \frac{J_h'(-n)-J_h'(n)}{4n}t^2 -\frac{J_h'(n)+J_h'(-n)}{2}t+\rho_a,
\end{equation}
which proves both existence and uniqueness.  This also  shows that $K_T=K-\rho_a$ is linear and that
\begin{equation}\label{lg_inverse_bound}
        \|K_T h\|_{C^3_b([-n,n])}\le C_n\|h\|_{C^0_b([-n,n])}
\end{equation}
for a constant $C_n < \infty$.  Hence, $K$ is continuous and affine. The last assertion follows from the compact inclusion $C^3([-n,n])\subset C^2([-n,n])$.
\end{proof}

Our next lemma builds an important nonlinear map related to part of the ODE \eqref{lg_profile_equation}.

\begin{lem}\label{lg_nonlinear_derivative}
The operator $\varphi:C^2([-n,n])\to C^0([-n,n])$ defined by
\begin{equation}\label{lg_phi}
        \varphi(u)=-\frac{4\mu A}{\sigma}e^{-2u}u'' -(u')^3-3u'u''
\end{equation}
is smooth, with derivative given by 
\begin{equation}\label{lg_phi_derivative}
 D\varphi(\rho)h = \frac{8\mu A}{\sigma}e^{-2\rho}\rho''h -\frac{4\mu A}{\sigma}e^{-2\rho}h''
-3(\rho')^2h'-3h'\rho''-3\rho'h''.
\end{equation}
\end{lem}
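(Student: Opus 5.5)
The plan is to write $\varphi$ as a composition of simple pieces and inherit smoothness from each piece. Set $X = C^2([-n,n])$ and $Y = C^0([-n,n])$. The maps $u \mapsto u$, $u \mapsto u'$ and $u\mapsto u''$ from $X$ into $Y$ are bounded linear operators, since $\|u^{(j)}\|_{C^0}\le \|u\|_{C^2}$ for $j\le2$; hence they are smooth, and each is its own derivative. Pointwise multiplication $Y\times Y\to Y$ is a bounded bilinear map, because $\|fg\|_{C^0}\le\|f\|_{C^0}\|g\|_{C^0}$, so it is also smooth. The products $(u')^3$, $u'u''$ and $e^{-2u}u''$ therefore reduce to compositions of these bounded multilinear maps with the linear derivative maps, once the Nemytskii operator $N(u)=e^{-2u}$ is handled.

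The only genuinely nonlinear ingredient is $N : Y\to Y$, $N(f)=e^{-2f}$. I would prove directly that it is $C^\infty$ with $D^kN(f)[h_1,\dots,h_k]=(-2)^k e^{-2f}h_1\cdots h_k$.

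Fix $f$ and write $R=\|f\|_{C^0}+1$. For $\|h\|_{C^0}\le 1$, Taylor's theorem with remainder applied pointwise gives
\begin{equation*}
\bigl|e^{-2(f+h)}-e^{-2f}+2e^{-2f}h\bigr|\le 2e^{2R}|h|^2 .
\end{equation*}
Taking the supremum over $[-n,n]$ shows Fréchet differentiability, with $DN(f)h=-2e^{-2f}h$.

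Higher derivatives follow by induction. $DN(f)$ is multiplication by $-2N(f)$, so $DN = M\circ(-2N)$, where $M:Y\to\mathcal L(Y,Y)$ sends $g$ to the multiplication operator by $g$. $M$ is bounded linear, so if $N$ is $C^k$ then $DN$ is $C^k$, and hence $N$ is $C^{k+1}$. Continuity of $DN$ itself comes from the uniform local Lipschitz estimate $|e^{-2f_1}-e^{-2f_2}|\le 2e^{2R}|f_1-f_2|$, valid on bounded sets. This step, uniform control of the remainder on the compact interval, is the only place any real care is needed, and compactness of $[-n,n]$ (boundedness of $f$) makes it routine.

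Smoothness of $\varphi$ then follows from the chain and product rules. The derivative formula comes from differentiating each term:
\begin{equation*}
D\bigl(e^{-2u}u''\bigr)(\rho)h=-2e^{-2\rho}\rho''h+e^{-2\rho}h'',
\qquad D\bigl((u')^3\bigr)(\rho)h=3(\rho')^2h',
\qquad D\bigl(u'u''\bigr)(\rho)h=h'\rho''+\rho'h''.
\end{equation*}
Multiplying the first by $-4\mu A/\sigma$, the second by $-1$ and the third by $-3$, and summing, yields \eqref{lg_phi_derivative} exactly.
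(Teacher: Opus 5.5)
Your proof is correct. It differs from the paper only in how you handle the exponential.

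The paper regards $C^2([-n,n])$, after an equivalent renorming, as a commutative unital Banach algebra. It takes the power-series exponential $\exp(u)=\sum_k u^k/k!$ there, which is smooth with $D\exp(u)h=\exp(u)h$. It identifies this with the pointwise exponential because norm convergence implies pointwise convergence, and then composes with the embedding $C^2\hookrightarrow C^0$. The rest of its argument matches yours: bounded linear derivative maps and smooth multiplication in the algebra $C^0$.

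You instead prove directly that the superposition operator $f\mapsto e^{-2f}$ is $C^\infty$ on $C^0([-n,n])$. You use a pointwise second-order Taylor remainder, bounded uniformly on the compact interval, together with the bootstrap $DN=M\circ(-2N)$.

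The paper's route is shorter given the Banach-algebra facts, and it yields more: real-analyticity, and smoothness even as a map into $C^2$. It does, however, rely on the power-series structure of $\exp$ and on $C^2$ being an algebra. Your route is elementary and works only in $C^0$, which is the target space the lemma actually uses. It also extends to an arbitrary smooth nonlinearity $\phi$ if you run the induction over the family $\phi,\phi',\phi'',\dots$, rather than closing it with the identity $\phi'=-2\phi$ that is special to the exponential.
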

\begin{proof}
Since $C^0([-n,n])$ is a commutative Banach algebra, multiplication is continuous and multilinear and hence smooth. The maps $u\mapsto u'$ and $u\mapsto u''$ are bounded linear maps from $C^2([-n,n])$ to $C^0([-n,n])$.  It remains to check that the exponential composition map is smooth and compute its derivatives, as once this is established the identity \eqref{lg_phi_derivative} follows from this and the standard formulas for the derivatives of multilinear maps.

Up to norm equivalence, the space $C^2([-n,n])$ is also a commutative Banach algebra with unit, so the map $C^2([-n,n]) \ni u \mapsto \exp(u) = \sum_{k=0}^\infty u^k /k! \in C^2([-n,n])$ is well-defined, smooth, and satisfies $D \exp(u) h = \exp(u) h$.  On the other hand, since norm convergence in this space implies pointwise convergence, we have $\exp(u)(x) = e^{u(x)}$ for all $u \in C^2([-n,n])$ and $x \in [-n,n]$.  Thus, the map $C^2([-n,n]) \ni u \mapsto e^{-2u} \in C^2([-n,n]) \hookrightarrow C^0([-n,n])$ is smooth, and its derivative at $\rho \in C^2([-n,n])$ is given by $h\mapsto-2e^{-2\rho}h$.
\end{proof}

The next lemma establishes some essential a priori estimates.

\begin{lem}\label{lg_derivative_bounds}
Assume that $g>64e^{3\rho_\ast}/A$. There exist constants $K_1,K_2>0$  such that if  $\rho\in C^3([-n,n])$ for $1 \le n \in \mathbb{N}$  and $\lambda \in [0,1]$ satisfy
 \begin{equation}\label{lg_homotopy_equation}
        \rho''=\lambda F(\rho)-\lambda G(\rho)\rho' -\frac{\sigma}{4\mu A} e^\rho(e^\rho)''',
        \quad \rho'(\pm n)=0, 
        \text{ and } \alpha\le\rho\le\rho_\ast,
\end{equation}
then 
\begin{equation}\label{lg_uniform_derivative_bounds}
        \|\rho'\|_{C^0_b([-n,n])}\le K_1 \text{ and } \|\rho''\|_{C^0_b([-n,n])}\le K_2.
\end{equation}
\end{lem}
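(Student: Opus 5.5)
The plan is to obtain $K_1$ from an energy identity adapted to the homotopy, and $K_2$ from a first-order linear ODE satisfied by $(e^\rho)''$. Throughout, write $H=e^\rho$ and $\kappa=\sigma/(4\mu A)$. The constraint $\alpha\le\rho\le\rho_\ast$ gives $e^\alpha\le H\le e^{\rho_\ast}$. By Lemma \ref{lg_amplitude_barrier_lemma} and the assumption $g>64e^{3\rho_\ast}/A$, there is $c_0>0$ with $G\le -c_0$ on $[\alpha,\rho_\ast]$. All constants below must depend only on $a,\mu,\sigma,g,A$, and not on $n\ge1$ or $\lambda\in[0,1]$.

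\emph{Step 1: energy bound.} Mimicking the proof of Theorem \ref{bnded_is_hetero_orbit}, set
\[
\mathcal E_\lambda=\lambda V(\rho)+\tfrac12(\rho')^2+\kappa H'H'',
\]
with $V$ as in \eqref{lg_potential_energy}. Multiplying \eqref{lg_homotopy_equation} by $\rho'$ gives
\[
\mathcal E_\lambda'=-\lambda G(\rho)(\rho')^2+\kappa (H'')^2\ge0 .
\]
At $t=\pm n$ we have $\rho'=0$, hence $H'=H\rho'=0$, so $\mathcal E_\lambda(\pm n)=\lambda V(\rho(\pm n))$. Integrating over $[-n,n]$ therefore yields
\[
\kappa\int_{-n}^n (H'')^2\,dt\le \lambda\bigl(V(\rho(n))-V(\rho(-n))\bigr)\le \operatorname{osc}_{[\alpha,\rho_\ast]}V=:C_V .
\]
This gives a uniform $L^2$ bound on $H''$. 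It is valid even when $\lambda=0$, which matters because the $\rho'$ term in the identity degenerates there.

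\emph{Step 2: the bound on $\rho'$.} Cover $[-n,n]$ by intervals $I$ of length $1$ (possible since $n\ge1$). By the mean value theorem there is $t_I\in I$ with $|H'(t_I)|\le \operatorname{osc}H\le e^{\rho_\ast}$. For $t\in I$, Cauchy--Schwarz gives
\[
|H'(t)|\le e^{\rho_\ast}+\int_I|H''|\le e^{\rho_\ast}+(C_V/\kappa)^{1/2}.
\]
Since $\rho'=H'/H$ and $H\ge e^\alpha$, this yields $K_1$.

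\emph{Step 3: the bound on $\rho''$.} Multiplying \eqref{lg_homotopy_equation} through by $4\mu A$, the homotopy equation in height form reads $v:=H''$ solving the linear first-order equation
\[
\sigma H v'+\frac{4\mu A}{H}v=f,\qquad f=\lambda\Bigl[4a\Bigl(1-H-\tfrac{A}{4H}\Bigr)-\Bigl(\tfrac{A^2}{H^2}-gH\Bigr)H'\Bigr]+4\mu A\frac{(H')^2}{H^2}.
\]
Here $f$ is uniformly bounded by Step 2. The coefficient ratio $\beta(t)=4\mu A/(\sigma H^2)$ lies in a fixed interval $[\beta_0,\beta_1]$ with $\beta_0>0$. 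The difficulty is that no boundary condition is available on $v$ itself, so a maximum principle alone fails at $t=-n$: at an endpoint maximum there we only know $v'(-n)\le0$, which is the wrong sign. Instead I combine the ODE with the $L^2$ bound from Step 1.
\begin{itemize}
\item Take $M=2\sup|f|/(4\mu A e^{-\rho_\ast})$. Wherever $|v|\ge M$, the equation forces $\operatorname{sgn}(v)\,v'\le -\tfrac{\beta_0}{2}|v|$. Hence $|v|$ decays at least exponentially, at a uniform rate, for as long as it stays above $M$.
\item If $|v(t_0)|=L\ge 2M$ at some point, follow the solution forward. If $t_0\le n-\tfrac12$, it stays above $L/2$ on an interval of length $\min\{\tfrac12,c/\beta_1\}$, so $\int (H'')^2\gtrsim L^2$. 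The symmetric backward argument near $t=n$ runs the other way: $|v|$ grows backward when large. Together with Step 1 this forces $L\le C$.
\item The endpoint $t=n$ can alternatively be handled by the maximum principle, since $v'(n)\ge0$ at a maximum has the right sign.
\end{itemize}
Finally, $\rho''=H''/H-(\rho')^2$ is bounded by a uniform constant, which is $K_2$.

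I expect Step 3 to be the main obstacle, specifically the lack of boundary information on $H''$ at $t=-n$. The $L^2$-plus-decay argument above is the device I would try first, taking care that the relevant interval lengths are bounded below independently of $n$, using $n\ge1$.
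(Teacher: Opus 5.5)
Your proposal is correct, and it reaches the two bounds by a somewhat different route than the paper. Step 1 is the paper's energy identity.

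\emph{The bound $K_1$.} The paper extends $Y=e^\rho$ by constants outside $[-n,n]$ and invokes Gagliardo--Nirenberg, $\|\tilde Y'\|_{C^0_b}\lesssim\|Y''\|_{L^2}^{2/3}\|Y\|_{C^0_b}^{1/3}$. Your unit-interval mean value theorem plus Cauchy--Schwarz argument is an elementary, non-scale-invariant version of the same interpolation, and it is fully adequate here.

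\emph{The bound $K_2$.} The paper works with $\rho''$. It first needs $\|\rho''\|_{L^2}\le Q$, which goes through $\rho''=e^{-\rho}Y''-(\rho')^2$. That in turn requires $\|\rho'\|_{L^2}\le\sqrt{D/k}$, which comes from $-G\ge k>0$ and $\lambda>0$, with $\lambda=0$ handled separately. The paper then applies a two-sided Gronwall estimate $|\rho'''|\le B|\rho''|+L$, starting from a point in each unit interval where $|\rho''|\le Q$.

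You instead work with $v=(e^\rho)''$, whose $L^2$ bound is immediate from Step 1. As a result you never need $\|\rho'\|_{L^2}$, you use only $G\le 0$ rather than strict negativity, and you need no case split on $\lambda$.

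On the other hand, your sign-based decay and persistence argument is more machinery than required. Since $\|v\|_{L^2(I)}\le(C_V/\kappa)^{1/2}$ on every unit interval $I$, there is $s\in I$ with $|v(s)|\le(C_V/\kappa)^{1/2}$. The bound $|v'|\le\beta_1|v|+\sup|f|/(\sigma e^{\alpha})$, together with Gronwall between $s$ and $t$, then closes the estimate at once, whatever the sign of the damping. This is exactly the paper's device, transplanted to $v$. So the missing boundary data at $t=-n$ is an obstacle only for a pure maximum-principle argument.

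If you keep your version, state explicitly the lower bound $\operatorname{sgn}(v)v'\ge-\tfrac32\beta_1|v|$ on $\{|v|\ge M\}$. That bound is what justifies $|v|\ge L/2$ on an interval of length comparable to $1/\beta_1$.
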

\begin{proof}
Fix $n,\lambda,\rho$ as in the hypotheses, write $Y=e^\rho$, and define the quantities
\begin{equation}\label{lg_estimate_constants}
D=  \max_{[\alpha,\rho_\ast]}V-\min_{[\alpha,\rho_\ast]}V \text{ and } k=\min_{[\alpha,\rho_\ast]}(-G)>0,
\end{equation}
noting that the final inequality is a consequence of Lemma \ref{lg_amplitude_barrier_lemma}.  Multiplying \eqref{lg_homotopy_equation} by $\rho'$, integrating by parts over $[-n,n]$, and using the boundary conditions shows that 
\begin{equation}\label{lg_finite_interval_energy}
   \frac{\sigma}{4\mu A}\int_{-n}^n(Y'')^2\,dt  + \lambda\int_{-n}^n[-G(\rho)](\rho')^2\,dt
        =\lambda\bigl[V(\rho(n))-V(\rho(-n))\bigr]  \le\lambda D.
\end{equation}
If $\lambda=0$, this identity implies that $Y''=0$, but then the boundary conditions $Y'(\pm n)=0$ mandate that $\rho$ is constant.   On the other hand, if $\lambda>0$ then the identity implies that 
\begin{equation}\label{lg_l2_estimates}
   \|Y''\|_{L^2([-n,n])}\le\sqrt{4\mu A D/\sigma} \text{ and } \|\rho'\|_{L^2([-n,n])}\le\sqrt{D/k}.
\end{equation}

Let $\tilde{Y}$ denote the extension of  $Y$ outside $[-n,n]$ by $Y(n)$ in $(n,\infty)$ and $Y(-n)$ in $(-\infty,-n)$.  The fact that $Y'(\pm n)=0$ implies that  $\tilde{Y}$ is continuously differentiable and has weak second derivative with $\tilde{Y}'' = Y''$ in $(-n,n)$ and $0$ otherwise.  In particular, $\|\tilde{Y}\|_{C^0_b(\mathbb{R})}\le e^{\rho_\ast}$ and $\|\tilde{Y}''\|_{L^2(\mathbb{R})}   =\|Y''\|_{L^2([-n,n])}$.  We may then apply  Gagliardo-Nirenberg interpolation (see, for instance, Chapter 12 of \cite{GiovanniLeoni}) to bound
\begin{equation}\label{lg_interpolation}
  \|\tilde{Y}'\|_{C^0_b(\mathbb{R})} \le C \|\tilde{Y}''\|_{L^2(\mathbb{R})}^{2/3}
        \|\tilde{Y}\|_{C^0_b(\mathbb{R})}^{1/3} = C \|Y''\|_{L^2([-n,n])}^{2/3}
        \|Y\|_{C^0_b([-n,n])}^{1/3} 
\end{equation}
for some universal constant $C \ge 0$.  Since $\rho'=e^{-\rho}Y'$ and $\rho\ge\alpha$, combining \eqref{lg_l2_estimates} and \eqref{lg_interpolation} gives
\begin{equation}\label{lg_first_derivative_constant}
    \|\rho'\|_{C^0_b([-n,n])} \le e^{-\alpha}\|\tilde{Y}'\|_{C^0_b(\mathbb{R})}
    \le C e^{-\alpha+\rho_\ast/3}( 4\mu A D/\sigma)^{1/3} =:K_1.
\end{equation}
Moreover, $\rho''=e^{-\rho}Y''-(\rho')^2$, so
\begin{equation}\label{lg_second_derivative_l2}
 \|\rho''\|_{L^2([-n,n])} \le e^{-\alpha}\sqrt{4\mu A D/\sigma} +K_1\sqrt{D/k}=:Q.
\end{equation}
Rewriting the ODE as
\begin{equation}\label{lg_second_derivative_ode}
\rho'''=-(\frac{4\mu A}{\sigma}e^{-2\rho}+3\rho')\rho'' + \lambda\frac{4\mu A}{\sigma}e^{-2\rho} \bigl(F(\rho)-G(\rho)\rho'\bigr)-(\rho')^3
\end{equation}
then allows us to bound  $|\rho'''|\le B|\rho''|+L$ for
\begin{equation}\label{lg_gronwall_constants}
 B=\frac{4\mu A}{\sigma}e^{-2\alpha}+3K_1 \text{ and } 
 L=\frac{4\mu A}{\sigma}e^{-2\alpha} \bigl(\|F\|_{C^0_b([\alpha,\rho_\ast])} +K_1\|G\|_{C^0_b([\alpha,\rho_\ast])}\bigr)+K_1^3.
\end{equation}
For each $t\in[-n,n]$ choose an interval $I_t \subseteq [-n,n]$ of length one with $t \in I_t$.  By \eqref{lg_second_derivative_l2}, there exists $s \in I_t$  with $|\rho''(s)|\le Q$.  Applying Gronwall's inequality between $s$ and $t$ then yields the bound $|\rho''(t)|\le e^B(Q+L)$. This completes the proof upon setting  $K_2=e^B(Q+L)$.
\end{proof}

We now have all of the tools needed to prove the main theorem of this section, from which Theorem \ref{main_large_grav} will then readily follow.

\begin{thm}\label{lg_bounded_solution}
For every $g > 64 e^{3 \rho_\ast}/A$, equation \eqref{lg_profile_equation} has a nonconstant solution $\rho\in C_b^3(\mathbb{R})$ satisfying $\rho(0)=\rho_a$ and $\alpha\le\rho(t)\le\rho_\ast$ for every $t\in\mathbb{R}$.  
\end{thm}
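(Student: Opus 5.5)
The plan is to solve a boundary value problem on each interval $[-n,n]$ by Leray--Schauder degree, following \cite{bonheure}, and then pass to the limit $n\to\infty$ with a diagonal argument. For fixed $n$ and $\lambda\in[0,1]$ we consider
\begin{equation*}
\rho'''=\lambda\frac{4\mu A}{\sigma}e^{-2\rho}\bigl(F(\rho)-G(\rho)\rho'\bigr)+\varphi(\rho),\qquad \rho'(\pm n)=0,\quad \rho(0)=\rho_a,
\end{equation*}
which is the homotopy \eqref{lg_homotopy_equation} with the extra normalization $\rho(0)=\rho_a$. Using Lemmas \ref{lg_affine_inverse} and \ref{lg_nonlinear_derivative}, this becomes the fixed point problem $\rho=T_\lambda(\rho):=K\bigl(\lambda N(\rho)+\varphi(\rho)\bigr)$ on $C^2([-n,n])$, where $N$ denotes the $F,G$ term. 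The map $T_\lambda$ is compact and continuous in $(\lambda,\rho)$. We work on the open set
\begin{equation*}
\Omega=\{\rho\in C^2: \alpha<\rho<\rho_\ast,\ \|\rho'\|_{C^0}<K_1+1,\ \|\rho''\|_{C^0}<K_2+1\}.
\end{equation*}
At $\lambda=0$, the energy identity in Lemma \ref{lg_derivative_bounds} forces $\rho\equiv\rho_a$. We then compute $\deg(I-T_0,\Omega,0)\neq0$, either by linearizing at the constant, where $I-DT_0$ should be invertible because the linear problem $-\tfrac{4\mu A}{\sigma}e^{-2\rho_a}h''=h'''$ with the given boundary conditions has only the trivial solution, or by a further homotopy to a linear problem.

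The main obstacle is showing that there are no fixed points on $\partial\Omega$ for $\lambda\in(0,1]$. The derivative parts of the boundary are excluded by Lemma \ref{lg_derivative_bounds}, once we know $\alpha\le\rho\le\rho_\ast$. So the key step is a strict amplitude barrier: a solution with $\alpha\le\rho\le\rho_\ast$ cannot touch $\alpha$ or $\rho_\ast$. Here the choice of $\alpha$ and $\rho_\ast$ through \eqref{lg_amplitude_barriers} enters, together with the energy $\mathcal E$ of \eqref{lg_potential_energy} restricted to $[-n,n]$. Since $\rho'(\pm n)=0$ and $G<0$, the energy is nondecreasing. Suppose $\rho(t_0)=\rho_\ast$ at an interior maximum. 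Then $\rho'(t_0)=0$, and the capillary term $Y'Y''$ vanishes there. Comparing energies between $0$ (where $\rho=\rho_a$) and $t_0$, and using the reflection symmetry \eqref{lg_reflection_identity}, should produce a contradiction with the strict monotonicity of the energy on a nonconstant solution, because $V(\rho_\ast)=V(\rho_-)$ and $V(\alpha)=V(\rho_+)$.

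The difficulty is that $\mathcal E(0)$ involves $(\rho'(0))^2$ and $Y'Y''(0)$, which are not controlled. I would first try to compare $t_0$ against an endpoint $\pm n$ instead, where $\rho'=0$ and $Y'=0$, so that $\mathcal E(\pm n)=V(\rho(\pm n))$. I would then split into cases on the ordering of $t_0$ relative to $0$ and on the location of the endpoint values. Touching at the endpoints $t_0=\pm n$ needs separate treatment via the sign of $\rho''$ there. Once the barrier holds, homotopy invariance gives a solution $\rho_n$ at $\lambda=1$.

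Finally, the bounds $\alpha\le\rho_n\le\rho_\ast$, together with $K_1$, $K_2$ and the ODE giving a bound on $\rho_n'''$, are uniform in $n$. Arzel\`a--Ascoli and a diagonal argument then give $\rho_n\to\rho$ in $C^2_{\mathrm{loc}}$, and the ODE upgrades this to $C^3_{\mathrm{loc}}$. The limit solves \eqref{lg_profile_equation}, lies in $C^3_b(\mathbb{R})$, and satisfies $\rho(0)=\rho_a$ and $\alpha\le\rho\le\rho_\ast$. It is nonconstant because $\rho_a$ is not a zero of $F$, so the constant $\rho_a$ is not a solution.
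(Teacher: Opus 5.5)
You set the problem up the same way the paper does: the fixed point map built from $K$, the degree at $\lambda=0$ by linearizing at $\rho_a$, the derivative bounds from Lemma \ref{lg_derivative_bounds}, and the diagonal limit. The amplitude barrier, however, has a genuine gap.

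\textbf{What your endpoint comparison does and does not give.} Take the single untruncated energy $\mathcal{E}_\lambda=\lambda V(\rho)+\tfrac12(\rho')^2+\tfrac{\sigma}{4\mu A}(e^\rho)'(e^\rho)''$. It is strictly increasing and equals $\lambda V(\rho)$ at $\pm n$ and at critical points. Comparing with the endpoints does exclude two kinds of touching:
\begin{itemize}
\item touching $\rho_\ast$ at any $t_0\in[-n,n)$, because $0=\lambda V(\rho_\ast)<\lambda V(\rho(n))\le 0$;
\item touching $\alpha$ at any $t_0\in(-n,n]$, because $\lambda V(\rho(-n))<\lambda V(\alpha)=\lambda\min_{[\alpha,\rho_\ast]}V$.
\end{itemize}
It says nothing about the two remaining cases $\rho(n)=\rho_\ast$ and $\rho(-n)=\alpha$. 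There the endpoint energy is the maximum, respectively the minimum, of $\lambda V$ on $[\alpha,\rho_\ast]$, which is fully consistent with monotonicity.

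Your suggested fix via the sign of $\rho''$ also fails. At $t=n$ with $\rho(n)=\rho_\ast$ and $\rho'(n)=0$, equation \eqref{lg_homotopy_equation} reads $\rho''(n)=\lambda F(\rho_\ast)-\tfrac{\sigma}{4\mu A}e^{2\rho_\ast}\rho'''(n)$. Since $F(\rho_\ast)<0$, this is compatible with the maximum condition $\rho''(n)\le0$. The same happens at $-n$, using $F(\alpha)<0$.

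Comparing with $t=0$ gives nothing new either. With the untruncated potential, the two half-line energies are the same function, so you only recover the exclusions above. Nothing in your plan rules out a fixed point on $\partial\Omega$ of this type for some $\lambda\in(0,1]$, so homotopy invariance is not justified.

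\textbf{The missing idea: truncate differently on the two half-lines.} The paper changes the homotopy so that the barrier becomes automatic.
\begin{itemize}
\item On $t\ge0$ it uses $F_+$, frozen at $F(\alpha)$ below $\alpha$ and zero above $\rho_+$.
\item On $t<0$ it uses $F_-$, zero below $\rho_-$ and frozen at $F(\rho_\ast)$ above $\rho_\ast$.
\item It truncates $G$ similarly, and adds $\rho_-<u(0)<\rho_+$ to $\Omega_n$ so that $t\mapsto F^\ast(t,u(t))$ is continuous at $0$.
\end{itemize}
It then uses two energies $E_\pm$ on $[0,n]$ and $[-n,0]$, with potentials $V_\pm(r)=\int_{\rho_a}^rF_\pm$. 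Because $V_\pm(\rho_a)=0$ and $\rho(0)=\rho_a$, the uncontrolled derivative terms at $0$ are identical in the two energies, so $E_+(0)=E_-(0)=E_0$. You never need to bound $E_0$, because strict decrease gives $\lambda V_+(\rho(n))<E_0<\lambda V_-(\rho(-n))$.

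The truncations make $V_-\le M$ and $V_+\ge m$ on all of $\mathbb{R}$, while $V_+\ge M$ off $(\alpha,\rho_+)$ and $V_-\le m$ off $(\rho_-,\rho_\ast)$. Hence:
\begin{itemize}
\item every critical value on $[0,n]$, including $\rho(n)$, lies in $(\alpha,\rho_+)$;
\item every critical value on $[-n,0]$, including $\rho(-n)$, lies in $(\rho_-,\rho_\ast)$.
\end{itemize}
This gives strict bounds and excludes exactly the endpoint cases your argument leaves open. It also shows a posteriori that the truncations are inactive, so the fixed points solve the original equation.
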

\begin{proof}
We divide the proof into four steps.

\textit{Step 1: Setting up the modified equations and the homotopy.} Define the maps $F_\pm,  G^\ast : \mathbb{R} \to \mathbb{R}$ and $F^\ast : \mathbb{R}^2 \to \mathbb{R}$ via 
\begin{equation}\label{lg_truncated_F}
 F_-(r)=
\begin{cases}
      0&r<\rho_-,\\
      F(r)&\rho_-\le r\le\rho_\ast,\\
      F(\rho_\ast)&r>\rho_\ast,
\end{cases}
        \qquad
F_+(r)=
\begin{cases}
   F(\alpha)&r<\alpha,\\
   F(r)&\alpha\le r\le\rho_+,\\
   0&r>\rho_+,
\end{cases}
\end{equation}
 and
\begin{equation}\label{lg_truncated_FG}
G^\ast(r)=
\begin{cases}
   G(\alpha)&r<\alpha,\\
   G(r)&\alpha\le r\le\rho_\ast,\\
   G(\rho_\ast)&r>\rho_\ast,
\end{cases}
  \qquad
F^\ast(t,r)=
\begin{cases}
F_+(r)&t\ge0,\\
F_-(r)&t<0.
\end{cases}
\end{equation}
The functions $F_\pm$ and $G^\ast$ are Lipschitz, and $G^\ast<0$ on $\mathbb{R}$. For the sake of brevity, we write $X_n = C^2([-n,n])$ in what follows.  We also define  
\begin{multline}\label{lg_degree_domain}
\Omega_n = \{u\in X_n:\ \rho_-<u(0)<\rho_+,\; \alpha<u(t)<\rho_\ast \text{ for all }t \in [-n,n],\\  \|u'\|_{C^0_b} < 1 + K_1, \text{ and } \|u''\|_{C^0_b} < 1 + K_2\},
\end{multline}
which is  a bounded open subset of $X_n$ containing the constant function $\rho_a$. 

We propose to define the map $H:\overline{\Omega_n}\times[0,1]\to X_n$ via
\begin{equation}\label{lg_compact_homotopy}
H(u,\lambda)=K\left(\frac{4\mu A}{\sigma}e^{-2u}\bigl(\lambda F^\ast(\cdot,u)-\lambda G^\ast(u)u'-u''\bigr)
  -(u')^3-3u'u''\right),
\end{equation}
where $K$ is from Lemma \ref{lg_affine_inverse}.  To check that this is well-defined, fix $(u,\lambda)\in\overline{\Omega_n}\times[0,1]$.  Since $X_n \ni u \mapsto u(0)$ is continuous, we have $u(0)\in[\rho_-,\rho_+]$ and since  $F_+ =F_-=F$ on this interval we may compute 
\begin{equation}
\begin{aligned}
        \lim_{t\to0^-}F^\ast(t,u(t))&=F_-(u(0))=F(u(0))  \text{ and }  \\
        \lim_{t\to0^+}F^\ast(t,u(t))&=F_+(u(0))=F(u(0))=F^\ast(0,u(0)).
\end{aligned}
\end{equation}
Thus $t\mapsto F^\ast(t,u(t))$ is continuous at $0$, but continuity away from $0$ follows from the continuity of $u$ and $F_\pm$.  Since $u\in C^2([-n,n])$ and $G^\ast$ is continuous, every other term in the argument of $K$ is also continuous. Lemma \ref{lg_affine_inverse} then shows that $H(u,\lambda)\in C^3([-n,n])\subset X_n$, completing the proof that $H$ is well-defined.

The Lipschitz bounds for $F_\pm$ provide the bound
\begin{equation}\label{lg_truncated_composition_continuity}
 \|F^\ast(\cdot,u)-F^\ast(\cdot,v)\|_{C^0_b} \le 
 \max\{\operatorname{Lip}F_-,\operatorname{Lip}F_+\} \|u-v\|_{C^0_b}.
\end{equation}
A similar bound for $G^\ast$ holds, which together with the previous bound and Lemma \ref{lg_affine_inverse} proves continuity of $H$.  Its image is bounded in $C^3([-n,n])$, and hence relatively compact in $X_n$ by Arzel\`a-Ascoli.  Any fixed point of $H(\cdot,\lambda)$ must belong to $C^3([-n,n])$ and satisfy $u(0)=\rho_a$, $u'(\pm n)=0$.

\textit{Step 2: Proving nonzero Leray--Schauder degree.}   When $\lambda=0$ we have that $H(u,0)=K(\varphi(u))$, so Lemmas \ref{lg_affine_inverse} and \ref{lg_nonlinear_derivative} imply that $H(\cdot,0)$ naturally extends to a $C^1$ map on all of $X_n$.   We claim that the only fixed point of $H(\cdot,0)$ is the constant map $u=\rho_a$.  Indeed, a fixed point satisfies $u(0)=\rho_a$, $u'(\pm n)=0$, and $u''=-\frac{\sigma}{4\mu A} e^u(e^u)'''$.  Multiplying this  by $u'$ yields
\begin{equation}\label{lg_zero_homotopy_energy}
        \left[\frac{(u')^2}{2} + \frac{\sigma}{4\mu A}(e^u)'(e^u)''\right]'
        =\frac{\sigma}{4\mu A}\bigl((e^u)''\bigr)^2,
\end{equation}
and the term in brackets on the left vanishes at $\pm n$.  Thus, upon integrating over $[-n,n]$, we find that $(e^u)''=0$, but then the boundary condition $u'(\pm n)=0$ implies that $(e^u)'=0$.  Thus $u$ is constant and $u(0)=\rho_a$, so  $u=\rho_a$.  This proves the claim, which in particular tells us that there is no fixed point of $H(\cdot,0)$ on $\partial\Omega_n$ when $\lambda=0$.

Now define $T(u)=u-H(u,0)= u - K(\varphi(u))$. Since $DK=K_T$, Lemma \ref{lg_nonlinear_derivative} allows us to compute
\begin{equation}\label{lg_linearized_operator}
        DT(\rho_a)h=h+c_aK_T(h'')
        \text{ for } c_a=\frac{4\mu A}{\sigma}e^{-2\rho_a}>0.
\end{equation}
The map $h\mapsto K_T(h'')$ is compact on $X_n$, so $DT(\rho_a)$ is Fredholm. If $DT(\rho_a) h=0$, then $h\in C^3([-n,n])$ and satisfies  $h'''=- c_a h''$, $h(0)=0$, and $h'(\pm n)=0$.
Multiplying the ODE by $h'$ and integrating shows that 
\begin{equation}\label{lg_kernel_energy}
        -\int_{-n}^n(h'')^2\,dt =- c_a\int_{-n}^nh''h'\,dt=0,
\end{equation}
from which we deduce that $h''=0$.  In turn, the conditions at $\pm n$ imply that $h'=0$, but then $h(0)=0$ requires that $h=0$.  We have thus proved that  $DT(\rho_a)$ is injective, so the Fredholm alternative implies that  $DT(\rho_a)$ is invertible.  Since $\rho_a\in\Omega_n$ is the only zero of $T$ in $\overline{\Omega_n}$, the degree formula for nondegenerate zeros from Section 2 of \cite{Mawhin1999} yields
\begin{equation}\label{lg_degree_at_zero}
    \deg(T,\Omega_n,0)\in\{-1,1\}.
\end{equation}

\textit{Step 3: A priori bounds.} We now aim to show that  $H(\cdot,\lambda)$ has no fixed point on $\partial\Omega_n$ for any $\lambda\in[0,1]$.  The case $\lambda=0$ was proved in Step 2, so suppose that $\lambda>0$ and $\rho=H(\rho,\lambda)$ with $\rho\in\overline{\Omega_n}$.  Define the potentials $V_\pm : \mathbb{R} \to \mathbb{R}$ via 
\begin{equation}\label{lg_truncated_potentials}
        V_+(r)=\int_{\rho_a}^rF_+(s)\,ds \text{ and } V_-(r)=\int_{\rho_a}^rF_-(s)\,ds.
\end{equation}
Similarly, define $E_- : [-n,0]  \to \mathbb{R}$ and $E_+ : [0,n]  \to \mathbb{R}$ via 
\begin{equation}\label{lg_truncated_energies}
        E_\pm(t)=\lambda V_\pm(\rho(t))
            -\frac{(\rho'(t))^2}{2}
            -\frac{\sigma}{4\mu A}(e^\rho)'(t)(e^\rho)''(t).
\end{equation}

The identity $\rho = H(\rho,\lambda)$ requires that 
\begin{equation}\label{lg_truncated_energy_derivative}
E_\pm'(t)=\lambda G^\ast(\rho(t))(\rho'(t))^2 -\frac{\sigma}{4\mu A}\bigl((e^\rho)''(t)\bigr)^2                    \le 0.
\end{equation}
We claim that $E_-$ is strictly decreasing on $[-n,0]$ and $E_+$ is strictly decreasing on $[0,n]$. To prove the first assertion, suppose that $-n\le s<t\le0$ and $E_-(s)=E_-(t)$. Integrating \eqref{lg_truncated_energy_derivative} over $[s,t]$ and using $\lambda>0$ and $G^\ast<0$ shows that $\rho'=0$ in $[s,t]$ and hence $\rho=c$ there for some constant $c$.  Plugging into the ODE satisfied by $\rho$ on $[-n,0]$ shows that $F_-(c)=0$.  Hence, the constant function $c$ and $\rho$ solve the same autonomous ODE and agree in $(s,t)$.  Since $F_-$ and $G^\ast$ are locally Lipschitz, ODE uniqueness  implies that $\rho=c$ throughout $[-n,0]$, but then $\rho_a = \rho(0)=c$, which contradicts the identity $F_-(c)=0$.  This proves the claim for $E_-$, but a similar argument on $[0,n]$ proves the claim for $E_+$.

Set $m=V_+(\rho_-)=V_-(\rho_-)$ and $M=V_+(\rho_+)=V_-(\rho_+)$. The definitions of $F_\pm$ and the identities \eqref{lg_amplitude_barriers} give the bounds
\begin{equation}\label{lg_potential_barriers}
\begin{cases}
 V_+(r)\ge m \text{ and } V_-(r)\le M &\text{for } r\in\mathbb{R},\\
 V_+(r)\ge M &\text{for } r\le\alpha \text{ or } r\ge\rho_+,\\
 V_-(r)\le m &\text{for } r\le\rho_- \text{ or } r\ge\rho_\ast.
\end{cases}
\end{equation}
We will use these bounds and the strict decrease of $E_\pm$ to control the values of $\rho$ at its extrema.

Since $\rho(0)=\rho_a$ and $V_+(\rho_a)=V_-(\rho_a)=0$, we have the equality $E_-(0)=E_+(0)$, and we denote this common value by $E_0$.  The conditions $\rho'(\pm n)=0$ imply that both derivative terms in \eqref{lg_truncated_energies} vanish at $\pm n$. Consequently, the strict decrease of $E_\pm$ and \eqref{lg_potential_barriers} show that
\begin{equation}\label{lg_endpoint_energy_comparison}
    \lambda m\le\lambda V_+(\rho(n))<E_0<\lambda V_-(\rho(-n))\le\lambda M.
\end{equation}

Suppose first that $t\in(0,n]$ and $\rho'(t)=0$. Then $(e^\rho)'(t)=0$ as well, so \eqref{lg_truncated_energies} and \eqref{lg_endpoint_energy_comparison} imply that
\begin{equation}\label{lg_positive_critical_point}
    \lambda V_+(\rho(t))=E_+(t)<E_0<\lambda M.
\end{equation}
Thus $V_+(\rho(t))<M$, and \eqref{lg_potential_barriers} requires that $\alpha<\rho(t)<\rho_+$. Similarly, if $t\in[-n,0)$ and $\rho'(t)=0$, then
\begin{equation}\label{lg_negative_critical_point}
    \lambda V_-(\rho(t))=E_-(t)>E_0>\lambda m.
\end{equation}
The inequality $V_-(\rho(t))>m$ and \eqref{lg_potential_barriers} then require that $\rho_-<\rho(t)<\rho_\ast$.

Now consider the maximum and minimum of $\rho$ on $[0,n]$.  If either is attained at an interior point, then $\rho'$ vanishes there. The same is true at $n$ by the boundary condition, so the preceding argument places all of these values in $(\alpha,\rho_+)$.  At the remaining endpoint, $\rho(0)=\rho_a$ also belongs to this interval.  Hence, $\rho([0,n]) \subset (\alpha,\rho_+)$.  Arguing similarly on $[-n,0]$ and combining, we deduce that
\begin{equation}\label{lg_strict_range_bounds}
    \alpha<\rho(t)<\rho_+ \text{ for } t\in[0,n]
    \text{ and }
    \rho_-<\rho(t)<\rho_\ast \text{ for } t\in[-n,0].
\end{equation}

The bounds \eqref{lg_strict_range_bounds} ensure that $F^\ast(t,\rho(t))=F(\rho(t))$ and $G^\ast(\rho(t))=G(\rho(t))$ for all  $t \in [-n,n]$.  This means that $\rho$ actually satisfies \eqref{lg_homotopy_equation}, so Lemma \ref{lg_derivative_bounds} then provides the estimates
\begin{equation}\label{lg_strict_derivative_bounds}
    \|\rho'\|_{C^0_b([-n,n])}\le K_1<K_1+1
    \text{ and }
    \|\rho''\|_{C^0_b([-n,n])}\le K_2<K_2+1.
\end{equation}
Together with $\rho(0)=\rho_a$ and \eqref{lg_strict_range_bounds}, these show that $\rho\in\Omega_n$.  In summary, we have now shown that $H(\cdot,\lambda)$ has no fixed point on $\partial\Omega_n$ for any $\lambda\in[0,1]$.  Homotopy invariance and \eqref{lg_degree_at_zero} then allow us to compute
\begin{equation}\label{lg_degree_at_one}
    \deg(I-H(\cdot,1),\Omega_n,0)=\deg(T,\Omega_n,0)\neq0.
\end{equation}
The existence property of the degree (see, for instance, Theorem 2.1(ii) of  \cite{Mawhin1999}) then yields a fixed point of $H(\cdot,1)$ for every $n\ge 1$.

\textit{Step 4: Proving boundedness and the final construction.}  For $n \ge 1$ let $\rho_n$ denote the fixed point from Step 3, which solves \eqref{lg_profile_equation} on $[-n,n]$, satisfies $\rho_n(0)=\rho_a$ and $\rho_n'(\pm n)=0$, and obeys the bound
\begin{equation}\label{lg_exhaustion_bounds}
 \sup_{n\ge1}\|\rho_n\|_{C^2_b([-n,n])}\le\max\{|\alpha|,|\rho_\ast|\}+K_1+K_2.
\end{equation}
The bound and the ODE then show that $\sup_{n \ge 1} \|\rho_n'''\|_{C^0_b([-n,n])} < \infty$ as well.

On each fixed interval $[-j,j]$, discard the finitely many indices $n<j$.  The previous bounds show that the remaining functions and their first two derivatives are uniformly bounded and equicontinuous on $[-j,j]$.  Applying Arzel\`a-Ascoli on successive intervals and extracting a diagonal subsequence then provides a single subsequence satisfying 
\begin{equation}\label{lg_diagonal_convergence}
        \rho_{n_k}\to \rho
        \text{ in }C^2_b(I) \text{ as } k \to \infty \text{ for every compact interval }I \subset \mathbb{R}.
\end{equation}
Write the right-hand side of \eqref{lg_profile_equation} as $\Phi(\rho,\rho',\rho'')$, where $\Phi$ is smooth.  Then \eqref{lg_diagonal_convergence} implies that $\rho_{n_k}'''\to \Phi(\rho,\rho',\rho'')$ locally uniformly on $\mathbb{R}$.  Passing to the limit in the integral identity for $\rho_{n_k}''$ proves that $\rho\in C^3(\mathbb{R})$ and solves \eqref{lg_profile_equation}. The global uniform bounds pass to the limit, so $\rho\in C_b^3(\mathbb{R})$. Moreover, $\rho(0)=\rho_a$ and $\alpha\le\rho(t)\le\rho_\ast$ for all $t$. Since $F(\rho_a)>0$, the constant $\rho_a$ is not a solution; thus the limiting solution is nonconstant.
\end{proof}

We are now ready to present the

\begin{proof}[Proof of Theorem \ref{main_large_grav}]
Theorem \ref{lg_bounded_solution} provides a nonconstant solution $\rho\in C_b^3(\mathbb{R})$ with $\rho(0)=\rho_a$ and values in $[\alpha,\rho_\ast]$.  Since $G<0$ on this compact interval, we have that $G(\rho(t))\le\max_{r\in[\alpha,\rho_\ast]}G(r)<0$ for all $t\in\mathbb{R}$. Theorem \ref{bnded_is_hetero_orbit} then proves \eqref{lg_bounded_orbit_limits}.  Smoothness follows from a standard bootstrap argument based on the ODE \eqref{lg_profile_equation}.  As in Step 5 of the proof of Theorem \ref{main_small_sigma_persistence_in_C}, repeated differentiation of the smooth first order formulation of the ODE shows that $\rho\in C_b^\infty(\mathbb{R})$.
Setting $H=e^\rho$ now gives a positive solution of \eqref{ss_height_equation} in $C_b^\infty(\mathbb{R})$, and the chain rule yields \eqref{lg_final_limits}.
\end{proof}

%%%%%%%%%%%%%%%%%%%%%%%%%%%%%%%%%%%%%%%%%%%%%%%%%%%%%%%%%%%%
%%%%%%%%%%%%%%%%%%%%%%%%%%%%%%%%%%%%%%%%%%%%%%%%%%%%%%%%%%%%
%%%%%%%%%%%%%%%%%%%%%%%%%%%%%%%%%%%%%%%%%%%%%%%%%%%%%%%%%%%%
\section{Arbitrary surface tension and large flux} \label{sec_center_manifold}
%%%%%%%%%%%%%%%%%%%%%%%%%%%%%%%%%%%%%%%%%%%%%%%%%%%%%%%%%%%%
%%%%%%%%%%%%%%%%%%%%%%%%%%%%%%%%%%%%%%%%%%%%%%%%%%%%%%%%%%%%
%%%%%%%%%%%%%%%%%%%%%%%%%%%%%%%%%%%%%%%%%%%%%%%%%%%%%%%%%%%%

In this final section we aim to prove Theorem \ref{main_small_monotone_bores}, which shows that for every fixed $\sigma\ge0$ and $g\in[0,\infty)\backslash\{8\}$ there exist small amplitude, monotone bores of ebbing ($g<8$) or surging ($g>8$) type, provided that the flux parameter $A$ is close enough to unity. We prove the theorem by using center manifold techniques. Throughout the preliminary reduction we assume that $a,\mu,\sigma>0$ and $g\in[0,\infty)\backslash\{8\}$; the case $\sigma=0$ will be treated directly in the proof of Theorem \ref{cm_main_theorem}. For the reduction it will be convenient to return to the height $H=e^\rho$ and write \eqref{ss_height_equation} in the equivalent form
\begin{equation}\label{cm_height_equation}
4\mu A\left(\frac{H'}{H}\right)' =4a\left(1-H-\frac{A}{4H}\right) -\left(\frac{A^2}{H^2}-gH\right)H'  -\sigma HH'''.
\end{equation}
Recalling \eqref{ss_equilibrium_values}, the two positive constant solutions of \eqref{cm_height_equation} are 
\begin{equation}\label{cm_end_states}
    H_\pm(A)=e^{\rho_\pm(A)}=\frac{1\pm\sqrt{1-A}}{2}.
\end{equation}
Both go to $1/2$ as $A \to 1$, and $H=1/2$ is a solution to \eqref{cm_height_equation} when $A=1$.  This is the key to unlocking center manifold tools.

Our first lemma begins to check the hypotheses of the center manifold theorem.

\begin{lem}\label{cm_linearization}
The following are equivalent. 
\begin{enumerate}
 \item $H : \mathbb{R} \to (0,1)$ is $C^3$ and solves \eqref{cm_height_equation} with $A \in [0,1]$.
 \item $h = H-1/2 : \mathbb{R} \to (-1/2,1/2)$ is $C^3$ and $x = (h,h',h'')$ solves 
\begin{equation}\label{cm_extended_system}
x' = \Psi(x,\delta) 
\end{equation}
for $\delta = 1-A$ and $\Psi(x,\delta) = (x_2,x_3,\Psi_3(x,\delta))$ with 
\begin{multline}
   \Psi_3(x,\delta) =  \frac{1}{\sigma}\left[
     -\frac{4\mu(1-\delta)}{(1/2+x_1)^2}x_3+\frac{4\mu(1-\delta)}{(1/2+x_1)^3}x_2^2 \right.
       \\ 
       \left. -\left(\frac{(1-\delta)^2}{(1/2+x_1)^3}-g\right)x_2+\frac{4a}{(1/2+x_1)^2}\left(\frac{\delta}{4}-x_1^2\right) \right]. 
\end{multline}
\end{enumerate}
We have that $\Psi(0,0)=0$, so $0$ is an equilibrium of the ODE \eqref{cm_extended_system} with $\delta=0$.  Moreover,  $L = D_x \Psi(0,0) \in \mathbb{R}^{3 \times 3}$ has characteristic polynomial
\begin{equation}\label{cm_characteristic}
    \det(\lambda I-L)=\lambda\left(\lambda^2+\frac{16\mu}{\sigma}\lambda+\frac{8-g}{\sigma}\right).
\end{equation}
The zero eigenvalue is simple, with center space  given by
\begin{equation}\label{cm_center_space}
    E^c=\{(h,0,0):h\in\mathbb{R}\}.
\end{equation}
The two remaining eigenvalues have strictly negative real parts when $g<8$, and are real with opposite signs when $g>8$.
\end{lem}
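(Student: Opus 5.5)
The plan is a direct computation in three parts: the equivalence, the equilibrium and linearization, and the spectral analysis.

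\emph{Equivalence.} Substitute $A=1-\delta$ and $H=1/2+h$ into \eqref{cm_height_equation}. Expanding $(H'/H)' = H''/H - (H')^2/H^2$ and multiplying by $H$ turns the viscous term into $4\mu(1-\delta)\bigl(h''/H - (h')^2/H^2\bigr)$. Next we use the algebraic identity
\[
1-H-\frac{A}{4H} = \frac{4H-4H^2-A}{4H} = \frac{\delta-4h^2}{4H},
\]
which holds because $4H-4H^2 = 1-4h^2$ when $H=1/2+h$. We then solve for $h'''$ by dividing by $\sigma H^2$: the original equation gives $\sigma H H''' = \dots$, and dividing by $H$ once more produces the $1/(1/2+x_1)^k$ factors with the stated powers. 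In particular, the forcing term $4a(\delta-4x_1^2)/(4H)$ divided by $H$ yields $\frac{4a}{H^2}(\delta/4 - x_1^2)$. The same manipulations read backwards give the converse. The range restriction $H\in(0,1)$, equivalently $h\in(-1/2,1/2)$, ensures all divisions are legitimate.

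\emph{Equilibrium and linearization.} At $x=0,\delta=0$ every term of $\Psi_3$ vanishes, so $\Psi(0,0)=0$. Differentiating at the origin gives
\[
\partial_{x_1}\Psi_3 = 0,
\]
because the forcing term is quadratic in $x_1$ when $\delta=0$ and the other terms vanish at $x_2=x_3=0$. The remaining partials are
\[
\partial_{x_2}\Psi_3 = -\frac{8-g}{\sigma},\qquad
\partial_{x_3}\Psi_3 = -\frac{16\mu}{\sigma},
\]
using $(1/2)^{-3}=8$ and $(1/2)^{-2}=4$; the $x_2^2$ term contributes nothing. Thus $L$ is the companion matrix with last row $(0,-(8-g)/\sigma,-16\mu/\sigma)$, whose characteristic polynomial is $\lambda^3 + \frac{16\mu}{\sigma}\lambda^2 + \frac{8-g}{\sigma}\lambda$, which is \eqref{cm_characteristic}.

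\emph{Spectral analysis.} Since $g\ne 8$, the constant term of the quadratic factor is nonzero, so $0$ is a simple eigenvalue. Its kernel is found from $Lv=0$: this forces $v_2=v_3=0$, which gives $E^c$ as in \eqref{cm_center_space}. For $g<8$ the quadratic has both coefficients positive, so by Routh--Hurwitz (or simply the sum and product of the roots) both roots have negative real parts. For $g>8$ the product of the roots $(8-g)/\sigma<0$, so the roots are real with opposite signs.

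The only mildly delicate point is the bookkeeping in the reduction: getting the powers of $(1/2+x_1)$ right and verifying the identity for $1-H-A/(4H)$. Everything else is routine.
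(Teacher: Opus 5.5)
Your proposal is correct and follows essentially the same route as the paper: the identities $4H-4H^2-A=\delta-4h^2$ and $(H'/H)'=H''/H-(H')^2/H^2$, division of $\sigma HH'''=\cdots$ by $\sigma H$, the companion-matrix computation of $L$, and a sum/product argument for the quadratic factor, with the standing assumption $g\neq 8$ giving simplicity of the zero eigenvalue. The only blemish is cosmetic: your phrases ``multiplying by $H$'' and ``dividing by $\sigma H^2$'' do not match the rest of your bookkeeping, since one simply divides by $\sigma H$, but the formulas you arrive at are correct.
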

\begin{proof}
The identities
\begin{equation}
    H-H^2-\frac{A}{4} = \frac{\delta}{4}-h^2 \text{ and }     \left(\frac{H'}{H}\right)'=\frac{H''}{H}-\frac{(H')^2}{H^2}
\end{equation}
transform \eqref{cm_height_equation} into
\begin{equation}\label{cm_height_third_derivative}
        \sigma H'''  =-\frac{4\mu A}{H^2}H''+\frac{4\mu A}{H^3}(H')^2-\left(\frac{A^2}{H^3}-g\right)H'+\frac{4a}{H^2}\left(\frac{\delta}{4}-h^2\right).
\end{equation}
Substitution of $h=H-1/2$ gives \eqref{cm_extended_system}, and these manipulations are reversible when $H>0$.

We then compute 
\begin{equation}\label{cm_linear_matrix}
    L = D_x\Psi(0,0)=
    \begin{pmatrix}
        0&1&0\\
        0&0&1\\
        0&-(8-g)/\sigma&-16\mu/\sigma
    \end{pmatrix},
\end{equation}
which yields \eqref{cm_characteristic} and \eqref{cm_center_space}.  If $g<8$, the two nonzero roots have negative sum and positive product, so they are either both negative or a conjugate pair with negative real part. If $g>8$, their product is negative, so they are real and have opposite signs.
\end{proof}

The next lemma invokes the center manifold theorem to construct locally invariant curves for \eqref{cm_extended_system} and to derive a reduced-form scalar ODE on them.

\begin{lem}\label{cm_reduction}
There exist $\delta_0>0$, open sets $0 \in  \mathcal{O} \subseteq \mathbb{R}^3$ and  $0 \in D \subseteq \mathbb{R}^2$, and functions $P,Q\in C^4(D)$ such that the following hold.
\begin{enumerate}
\item For every $|\delta|<\delta_0$ the curve
\begin{equation}\label{cm_graph}
    \mathcal{M}_\delta=\{(h,P(h,\delta),Q(h,\delta)):(h,\delta)\in D\}
\end{equation}
is locally invariant for \eqref{cm_extended_system}.
\item Every global solution $x : \mathbb{R} \to \mathcal{O}$ to \eqref{cm_extended_system} with fixed $|\delta|<\delta_0$ satisfies $x(\mathbb{R}) \subseteq \mathcal{M}_\delta$. 
 \item We have the identities 
\begin{equation}\label{cm_graph_tangent}
        P(0,0)=Q(0,0)=0, \quad P_h(0,0)=Q_h(0,0)= Q_\delta(0,0) =0, \text{ and } P_\delta(0,0)=\frac{4a}{8-g}.
\end{equation}
\item There exist an open set $0 \in D_0 \subset D$ and a function $k\in C^3(D_0)$ such that
\begin{equation}\label{cm_exact_factorization}
    P(h,\delta)=k(h,\delta)\left(\frac\delta4-h^2\right) \text{ and } k(0,0)=\frac{16a}{8-g}.
\end{equation}
For each fixed $|\delta|<\delta_0$, the flow of \eqref{cm_extended_system} on the portion of $\mathcal{M}_\delta$ corresponding to $(h,\delta)\in D_0$ is described by 
\begin{equation}\label{cm_scalar_system}
    h'=k(h,\delta)\left(\frac\delta4-h^2\right).
\end{equation}
Every solution of \eqref{cm_scalar_system} with $\mathbb{R} \ni t \mapsto (h(t),\delta)\in D_0$ lifts to a solution 
\begin{equation}
\mathbb{R} \ni t \mapsto (h(t),P(h(t),\delta),Q(h(t),\delta))\in \mathcal{M}_\delta 
\end{equation}
of \eqref{cm_extended_system}.
\end{enumerate}

\end{lem}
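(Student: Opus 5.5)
We will apply the parameter-dependent center manifold theorem to the suspended system obtained by appending the trivial equation $\delta'=0$ to \eqref{cm_extended_system}. By Lemma \ref{cm_linearization}, the linearization of $(x,\delta)\mapsto(\Psi(x,\delta),0)$ at the origin has a two-dimensional center space spanned by $(1,0,0,0)$ and a vector in the $\delta$ direction, and the remaining eigenvalues are hyperbolic (both stable when $g<8$, one stable and one unstable when $g>8$). Since $\Psi$ is smooth, the center manifold theorem yields a $C^4$ locally invariant center manifold, written as a graph over $(h,\delta)\in D$: $x_2=P(h,\delta)$, $x_3=Q(h,\delta)$. The invariance of the $\delta$-slices gives item 1. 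Item 2 follows from the standard property that, in a sufficiently small neighborhood $\mathcal O$, the center manifold contains every solution that stays in that neighborhood for all time; here one must shrink $\mathcal O$ and $\delta_0$ as needed.

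For item 3 we use tangency to $E^c\oplus\{\delta\text{-direction}\}$ and the invariance identity. Invariance means $h'=x_2=P$, $P_h P=Q$, and $Q_hP=\Psi_3(h,P,Q,\delta)$. At the origin, $P=Q=0$, and tangency gives $P_h=Q_h=0$. Differentiating the second identity in $\delta$ at $0$ gives $Q_\delta=P_{h\delta}P+P_hP_\delta=0$. Differentiating the third identity in $\delta$ at $0$ gives
\[
0=\tfrac1\sigma\bigl[-16\mu Q_\delta-(8-g)P_\delta+4a\bigr],
\]
so $P_\delta=4a/(8-g)$.

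For item 4, the key observation is that on $\mathcal M_\delta$ the equilibria of the reduced flow $h'=P(h,\delta)$ correspond to equilibria of \eqref{cm_extended_system}. Those have $x_2=x_3=0$ and $\delta/4=h^2$, that is, $h=\pm\sqrt\delta/2$ (for $\delta\ge0$). By item 2 these equilibria lie on $\mathcal M_\delta$ when they are near $0$, so $P$ vanishes exactly on the parabola $\delta=4h^2$ near the origin. To obtain the factorization we use a division argument. Since $P_\delta(0,0)\neq0$, the implicit function theorem gives a unique $C^4$ zero set $\delta=\phi(h)$ of $P$ near $0$. Every point $(h,4h^2)$ is a zero, so this zero set is the parabola and $\phi(h)=4h^2$. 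Change variables to $s=\delta/4-h^2$ and write $P(h,\delta)=\tilde P(h,s)$ with $\tilde P(h,0)=0$. Then Hadamard's lemma gives
\[
\tilde P(h,s)=s\int_0^1\partial_s\tilde P(h,\tau s)\,d\tau ,
\]
and we set $k$ equal to this integral. The function $k$ is $C^3$, and
\[
k(0,0)=\partial_s\tilde P(0,0)=4P_\delta(0,0)=16a/(8-g).
\]

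The reduced equation \eqref{cm_scalar_system} is then $h'=x_2=P$. Lifting works as follows: given a solution $h$ of \eqref{cm_scalar_system}, the curve $(h,P,Q)$ satisfies $x_1'=P=x_2$, $x_2'=P_hP=Q=x_3$ by the invariance identity, and $x_3'=Q_hP=\Psi_3$, again by invariance.

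The main obstacle is item 4. One must justify carefully that the points $(\pm\sqrt\delta/2,0,0)$ lie on $\mathcal M_\delta$, which uses item 2 together with a suitably small $\mathcal O$ and $\delta_0$. Negative $\delta$ must be handled through the Hadamard/implicit function step rather than by locating zeros, and the parabola is only a smooth curve in the variables $(h,\delta)$. The implicit function characterization via $P_\delta\ne0$ is the route I would try first.
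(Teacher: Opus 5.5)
Your proposal is correct and follows essentially the same route as the paper: you suspend the system with $\delta'=0$, apply the center manifold theorem, read off items 1--3 from the graph representation together with tangency and invariance, and obtain $k$ by dividing out $\delta/4-h^2$ using $P(h,4h^2)=0$. In particular, your Hadamard formula in the variable $s=\delta/4-h^2$ is exactly the paper's $k(h,\delta)=4\int_0^1P_\delta(h,4h^2+\tau(\delta-4h^2))\,d\tau$.

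Two minor simplifications are available. The implicit function step is unnecessary, since the division only needs $P(h,4h^2)=0$ and a choice of $D_0$ such that the segment from $(h,4h^2)$ to $(h,\delta)$ lies in $D$. Also, $P_\delta(0,0)$ and $Q_\delta(0,0)$ follow directly from tangency to $\ker\tilde L^2$, which is spanned by $(1,0,0,0)$ and $(0,4a/(8-g),0,1)$.
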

\begin{proof}

Let $\Psi: (-1/2,\infty) \times \mathbb{R}^3 \to \mathbb{R}^3$ be the smooth map from Lemma \ref{cm_linearization}.  We augment this to form $\tilde{\Psi}(x,\delta) = (\Psi(x,\delta),0) \in  \mathbb{R}^4$.  In the usual way, this allows us to study the $\delta$-dependent family of ODEs $x' = \Psi(x,\delta)$ as the new autonomous system $(x,\delta)' = \tilde{\Psi}(x,\delta)$.  Its linearization at the origin is
\begin{equation}
    \tilde L=D\tilde{\Psi}(0,0)=
    \begin{pmatrix}
        0&1&0&0\\
        0&0&1&0\\
        0&-(8-g)/\sigma&-16\mu/\sigma&4a/\sigma\\
        0&0&0&0
    \end{pmatrix}.
\end{equation}
Since $g\ne8$, its generalized center space is
\begin{equation}
    \tilde E^c=\ker\tilde L^2  =\left\{\left(h,\frac{4a}{8-g}\delta,0,\delta\right):(h,\delta)\in\mathbb{R}^2\right\},
\end{equation}
and its remaining two eigenvalues have nonzero real parts by Lemma \ref{cm_linearization}.

The nonlinear remainder $(x,\delta) \mapsto\tilde{\Psi}(x,\delta)-\tilde L(x,\delta)$ is smooth and vanishes to first order at $(0,0)$.  We may therefore invoke the center manifold theorem; specifically, we use Theorem 4.21(i)--(ii) of \cite{MagalRuan2009} with $k=4$. This provides a two-dimensional  locally invariant $C^4$ manifold $\tilde{\mathcal M}$ containing $(0,0)$ and tangent there to $\tilde E^c$, and an open neighborhood $\mathcal{W}$ of $(0,0)$ such that every complete trajectory of the augmented system contained in $\mathcal{W}$ lies within $\tilde{\mathcal{M}}$.

The projection $\pi(x,\delta)=(x_1,\delta)$ restricts to a linear isomorphism from $\tilde{E}^c$ onto $\mathbb{R}^2$. The inverse function theorem applied to $\pi|_{\tilde{\mathcal M}}$ then provides an open neighborhood $D$ of $(0,0)$ and functions $P,Q\in C^4(D)$ such that, after restricting the manifold to a neighborhood of the origin,
\begin{equation}
    \tilde{\mathcal M}=\{(h,P(h,\delta),Q(h,\delta),\delta):(h,\delta)\in D\}.
\end{equation}
Since $\delta'=0$, its sections at fixed $\delta$ give the locally invariant curves $\mathcal M_\delta$ in \eqref{cm_graph} for every sufficiently small $\delta$. This also gives the asserted joint $C^4$ regularity of $P$ and $Q$.

Shrink $\mathcal W$ so that every point of $\tilde{\mathcal M}$ in $\mathcal W$ belongs to the portion covered by this graph representation. Choose an open neighborhood $\mathcal O$ of the origin in $\mathbb{R}^3$ and $\delta_0>0$ sufficiently small that the preceding construction applies for every $|\delta|<\delta_0$ and $\mathcal O\times(-\delta_0,\delta_0)\subset\mathcal W$. If $x:\mathbb{R}\to\mathcal O$ solves $x'=\Psi(x,\delta)$ for a fixed $|\delta|<\delta_0$, then $(x(t),\delta)$ is a complete trajectory of the augmented system contained in $\mathcal W$. It therefore lies in $\tilde{\mathcal M}$, which means that $x(t)\in\mathcal M_\delta$ for every $t\in\mathbb{R}$. This proves the first two items.  The passage of $\tilde{\mathcal M}$ through the origin gives $P(0,0)=Q(0,0)=0$, and tangency to $\tilde E^c$ gives
\begin{equation}
    P_h(0,0)=Q_h(0,0)=Q_\delta(0,0)=0
    \text{ and }P_\delta(0,0)=\frac{4a}{8-g}.
\end{equation}
This proves \eqref{cm_graph_tangent} and the third item.

Next, we use local invariance to see that 
\begin{equation}\label{cm_graph_invariance}
        Q(h,\delta)=P_h(h,\delta)P(h,\delta) \text{ and }
        Q_h(h,\delta)P(h,\delta)= \Psi_3(h,P(h,\delta),Q(h,\delta),\delta).
\end{equation}
The nearby equilibria of \eqref{cm_extended_system} are exactly the states and parameter values satisfying $x_2=x_3 =0$ and $\delta=4x_1^2$. For all sufficiently small $h$ we have the inclusion  $(h,0,0) \in  \mathcal O$ and the bound $4h^2<\delta_0$.  Its constant trajectory therefore lies in $\mathcal{M}_{4h^2}$, which implies that $P(h,4h^2)=Q(h,4h^2)=0$. Now choose $D_0\subset D$ small enough that this identity holds for every $h$ occurring in $D_0$ and the segment joining $(h,4h^2)$ to $(h,\delta)$ lies in $D$ whenever $(h,\delta)\in D_0$.  The fundamental theorem of calculus then allows us to compute
\begin{equation}\label{cm_division_formula}
        P(h,\delta)=(\delta-4h^2)\int_0^1 P_\delta\bigl(h,4h^2+s(\delta-4h^2)\bigr)\,ds 
\end{equation}
and thus define $k$ via 
\begin{equation}
        k(h,\delta)=4\int_0^1 P_\delta\bigl(h,4h^2+s(\delta-4h^2)\bigr)\,ds.
\end{equation}
Since $P\in C^4(D)$, we have $k\in C^3(D_0)$, and \eqref{cm_graph_tangent} guarantees that $k(0,0)=4P_\delta(0,0)=16a/(8-g)$, completing the proof of  \eqref{cm_exact_factorization}, which together with the form of $\Psi$ then implies \eqref{cm_scalar_system}.  The final assertion of the fourth item follows from \eqref{cm_graph_invariance}.
\end{proof}

In the next lemma we study the reduced equation derived in the previous lemma.

\begin{lem}\label{cm_scalar_lemma} 
Let $0 \in U \subseteq \mathbb{R}^2$ be an open set, let $\omega \in C^3(U)$ be such that  $\omega_0:=\omega(0,0)\neq 0$, and set $\iota=\operatorname{sgn}(\omega_0)$.  There exists $\eta_0 >0$ with $[-\eta_0/2,\eta_0/2] \times [0,\eta_0^2] \subset U$ such that for every $0 < \eta < \eta_0$ there exists $h_\eta \in C^3_b(\mathbb{R})$ satisfying the following.

\begin{enumerate}
 \item We have that 
\begin{equation}\label{cm_scalar_limits}
     -\eta/2 < h_\eta(t) < \eta/2 \text{ and } \iota h_\eta'(t)>0 \text{ for }t \in \mathbb{R}, \text{ and }  \lim_{t\to\pm\infty}h_\eta(t)=\pm\frac{\iota\eta}{2}.
\end{equation}

 \item $h_\eta$ is the unique solution to the IVP
\begin{equation}\label{cm_scalar_eta}
\begin{cases}
 h_\eta'=\omega(h_\eta,\eta^2)\left(\frac{\eta^2}{4}-h_\eta^2\right) \\
 h_\eta(0) = 0.
\end{cases}
\end{equation}

 \item There are constants $C,c>0$, independent of $\eta$, such that we have the estimates 
\begin{equation}\label{cm_scalar_approximation}
        \sup_{t\in\mathbb{R}} \left|h_\eta(t)-\frac\eta 2 \tanh\left(\frac{\omega_0\eta t}{2}\right)\right|
        \le C\eta^2,
\end{equation}
and
\begin{equation}\label{cm_scalar_tail_bounds}
\left|h_\eta(t)-\frac{\iota\eta\operatorname{sgn}(t)}{2}\right|  + \sum_{j=1}^3 \eta^{-j} |h_\eta^{(j)}(t)| \le C\eta e^{-c\eta|t|}  \text{ for }t\in\mathbb{R}.
\end{equation}
\end{enumerate}
\end{lem}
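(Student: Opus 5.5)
The natural approach is to rescale to remove $\eta$ and treat the problem as a regular perturbation of the logistic (tanh) equation. Set $h_\eta(t)=\frac{\eta}{2}v(s)$ with $s=\eta t$. Then \eqref{cm_scalar_eta} becomes
\begin{equation*}
v'(s)=\tfrac{1}{2}\,\omega\bigl(\tfrac{\eta}{2}v,\eta^2\bigr)\,(1-v^2),\qquad v(0)=0 .
\end{equation*}
First choose $\eta_0$ so small that $[-\eta_0/2,\eta_0/2]\times[0,\eta_0^2]\subset U$. On this rectangle we can also ensure $\iota\,\omega\ge|\omega_0|/2$ and $|\omega-\omega_0|\le C_\omega(|h|+\delta)$, using $\omega\in C^1$. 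We then extend $\omega(\cdot,\eta^2)$ outside $[-\eta_0/2,\eta_0/2]$ in a Lipschitz way, for instance by constant extension, so that the IVP has a unique global solution.

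\textbf{Step 1 (existence and uniqueness).} The constants $v=\pm1$ are equilibria of the rescaled equation. By uniqueness, a solution starting at $v(0)=0$ stays in $(-1,1)$ for all time, so the extension is never used. On $(-1,1)$ the sign of $v'$ equals $\iota$, so $v$ is strictly monotone. Its limits at $\pm\infty$ must therefore be zeros of the right-hand side, which forces $v\to\pm\iota$. This gives item 2 and item 1.

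\textbf{Step 2 (tanh approximation).} The comparison profile is $v_0(s)=\tanh(\omega_0 s/2)$, which solves $v_0'=\frac{\omega_0}{2}(1-v_0^2)$. To avoid Gronwall growth over infinite time, we separate variables exactly:
\begin{equation*}
\int_0^{v}\frac{dr}{(1-r^2)\,\tilde\omega(r)}=\frac{s}{2},\qquad \tilde\omega(r)=\omega\bigl(\tfrac{\eta}{2}r,\eta^2\bigr).
\end{equation*}
Write $1/\tilde\omega(r)=1/\omega_0+\eta\,m(r)$, where $m$ is bounded and Lipschitz uniformly in $\eta$. Splitting $\frac{m(r)}{1-r^2}=\frac{m(\pm1)}{2(1\mp r)}+\text{bounded}$ gives
\begin{equation*}
\frac{1}{\omega_0}\operatorname{artanh}(v)+\eta\Bigl[\beta_\pm\log\tfrac{1}{1\mp v}+O(1)\Bigr]=\frac{s}{2}.
\end{equation*}
Now compare this with the identity $\frac{1}{\omega_0}\operatorname{artanh}(v_0)=\frac{s}{2}$. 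Near the endpoints the logarithmic correction is absorbed because $1-v^2$ is small there. The outcome is $\sup_s|v-v_0|\le C\eta$. Undoing the scaling yields \eqref{cm_scalar_approximation} with $C\eta^2$.

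This step is where I expect the main obstacle. The point is to obtain a bound uniform on all of $\mathbb{R}$, including the tails, where the $\log$ term from $m(\pm1)\ne0$ slightly shifts the decay rate. I would handle it by treating the bulk and the tails separately: on $|s|\le S$ use the integral identity directly, and on $|s|\ge S$ use the fact that both $v$ and $v_0$ are exponentially close to $\pm\iota$ (Step 3), so that their difference is $O(\eta)$ there too.

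\textbf{Step 3 (tail bounds).} Near $v=\iota$ the quantity $w=1-\iota v$ satisfies $w'\le-\frac{|\omega_0|}{2}w$ once $v$ has the right sign. Since $v$ has the sign of $\iota s$ for $s>0$ by monotonicity, this gives $|v-\iota|\le e^{-cs}$ for $s\ge0$, and symmetrically for $s\le0$. Hence $|h_\eta-\iota\eta\operatorname{sgn}(t)/2|\le C\eta e^{-c\eta|t|}$. For the derivatives, differentiating the ODE gives $h^{(j)}$ as polynomial expressions in lower derivatives times $\eta^2/4-h^2$ or its derivatives. Since $|\eta^2/4-h^2|\le \eta\,|\eta/2-\iota\operatorname{sgn}(t)h|$ and each derivative of $\omega(h,\eta^2)$ contributes a factor $h'$, induction yields $|h^{(j)}|\le C\eta^{j+1}e^{-c\eta|t|}$ for $j\le3$. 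This uses $\omega\in C^3$ only through its second derivatives. Boundedness of these derivatives also gives $h_\eta\in C^3_b$.
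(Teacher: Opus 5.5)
Your Steps 1 and 3 are sound and differ from the paper only cosmetically. You get existence from the barriers $v=\pm1$, where the paper inverts the explicit time map $T_\eta$. You get the tails from a differential inequality for $w=1-\iota v$, where the paper uses elementary $\tanh$ bounds.

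The gap is in Step 2. The bound $\sup_s|v-v_0|\le C\eta$ is asserted rather than derived, and the bulk/tail split you propose for it does not work. Since $v$ and $v_0$ are each within $Ce^{-c|s|}$ of $\pm\iota$, the tail region $|s|\ge S$ only gives $|v-v_0|\le Ce^{-cS}$. This is $O(\eta)$ only if $S\ge c^{-1}\log(1/\eta)$. On a bulk interval of that length, your identity controls the phase error only by $|\operatorname{arctanh}v-\omega_0 s/2|\le C\eta(1+|s|)$. Converting back using that $\tanh$ is $1$-Lipschitz then gives $|v-v_0|\le C\eta\log(1/\eta)$, which is off by a logarithm. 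The split moves the difficulty without resolving it.

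The missing step is to weight the linearly growing phase error by $\tanh'=\operatorname{sech}^2$, evaluated at phases of size comparable to $|s|$. Your remark that the logarithmic correction is ``absorbed because $1-v^2$ is small'' points toward this, but it needs sign and size control of both phases.

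Substitute $r=\tanh\rho$ in your separated integral to get $s/2=\int_0^{R}d\rho/\tilde\omega(\tanh\rho)$ with $R=\operatorname{arctanh}v$. This also makes your partial-fraction splitting of $m(r)/(1-r^2)$ unnecessary. Because $1/\tilde\omega$ has sign $\iota$ and equals $1/\omega_0+O(\eta)$, this identity yields directly three facts:
\begin{itemize}
\item $R$ has the sign of $\iota s$;
\item $|R|\ge c|s|$;
\item $|R-\omega_0 s/2|\le C\eta|s|$.
\end{itemize}
Every point between $R$ and $\omega_0 s/2$ then has modulus at least $c|s|$. The mean value theorem therefore gives $|v(s)-v_0(s)|\le C\eta|s|\operatorname{sech}^2(c|s|)\le C\eta$ for all $s\in\mathbb{R}$ at once, with no splitting.

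This is exactly the paper's argument. There it is written in terms of $r_\eta=\operatorname{arctanh}z_\eta$, which solves $r_\eta'=b_\eta(z_\eta)$ with $b_\eta=|\omega_0|/2+O(\eta)$.
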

\begin{proof} 

We divide the proof into three steps.

\emph{Step 1: Construction of the solution.} Since $\omega_0\neq0$, continuity allows us to shrink $U$ if necessary  so that there exist  $m,M>0$ such that 
\begin{equation}\label{cm_scalar_k_bounds}
        m\le\iota  \omega(h,\delta)\le M \text{ for } (h,\delta) \in U.
\end{equation}
Choose $\eta_0>0$ small enough that $[-\eta_0/2,\eta_0/2] \times [0,\eta_0^2] \subset U$.  For each  $0 < \eta <\eta_0$ we define $T_\eta:(-\eta/2,\eta/2)\to\mathbb{R}$ via
\begin{equation}\label{cm_separation_integral}
        T_\eta(h)=\int_0^h\frac{dr}{\omega(r,\eta^2)(\eta^2/4-r^2)}.
\end{equation}
Then $T_\eta$ is $C^4$ and satisfies $\operatorname{sgn}(T_\eta') = \iota$, which means that $T_\eta$ is strictly monotone. Moreover, \eqref{cm_scalar_k_bounds} and the identity
\begin{equation}\label{cm_log_divergence}
        \int_0^h\frac{dr}{\eta^2/4-r^2}  = \frac{1}{\eta}\log\left(\frac{\eta/2+h}{\eta/2-h}\right)
\end{equation}
show that $T_\eta(h)\to \pm \iota\infty$ as $h \to \pm \eta/2$, so $T_\eta$ is a bijection to $\mathbb{R}$.  This allows us to define $h_\eta = T_\eta^{-1} : \mathbb{R} \to (-\eta/2,\eta/2)$, which is then $C^4$, satisfies  \eqref{cm_scalar_limits} and uniquely solves the IVP \eqref{cm_scalar_eta}.  

\emph{Step 2: Uniform approximation.} Define the rescaled solution $z_\eta: \mathbb{R} \to (-1,1)$ via $z_\eta(s)=2 \eta^{-1} h_\eta(\iota s/\eta)$, which satisfies 
\begin{equation}\label{cm_rescaled_equation}
        z_\eta'=b_\eta(z_\eta)(1-z_\eta^2) \text{ and } z_\eta(0) =0 \text{ for } b_\eta(z)=\frac\iota2 \omega\left(\frac{\eta z}{2},\eta^2\right).
\end{equation}
We claim that, upon shrinking $\eta_0$ if necessary, the map $b_\eta$ satisfies the bound 
\begin{equation}\label{cm_b_estimate}
        \|b_\eta-|\omega_0|/2\|_{C^2([-1,1])}\le C\eta
\end{equation}
for $C>0$ a constant independent of $\eta$. Indeed, the mean value theorem bounds $|b_\eta-|\omega_0|/2|\le C(\eta+\eta^2)$, and decreasing  $\eta_0$ if necessary, we may  assume that $|\omega_0|/4\le b_\eta\le3|\omega_0|/4$ on $[-1,1]$.   Then differentiation shows that $b_\eta'(z)=\iota\eta \omega_h(\eta z/2,\eta^2)/4$ and $b_\eta''(z)=\iota\eta^2 \omega_{hh}(\eta z/2,\eta^2)/8$, and the claim follows readily.  

Next, define $r_\eta= \operatorname{arctanh}(z_\eta)$ and note that  \eqref{cm_rescaled_equation} implies that $r_\eta' =b_\eta(z_\eta)$  and $r_\eta(0)=0$.  Integration and \eqref{cm_b_estimate} then show that
\begin{equation}\label{cm_artanh_bounds}
        |r_\eta(s)-|\omega_0|s/2|\le C\eta|s|, \quad
         |r_\eta(s)|\ge\frac{|\omega_0|}4|s|,
        \text{ and } \operatorname{sgn}(r_\eta(s))=\operatorname{sgn}(s)
         \quad \text{ for } s \in \mathbb{R}.
\end{equation}
The latter two of these imply that every point between $r_\eta(s)$ and $|\omega_0|s/2$ has absolute value at least $|\omega_0||s|/4$. Since $z_\eta=\tanh r_\eta$, the mean value theorem allows us to estimate
\begin{equation}\label{cm_uniform_tanh}
|z_\eta(s)-\tanh(|\omega_0|s/2)| \le C\eta|s|\operatorname{sech}^2(|\omega_0||s|/4) \le C\eta,
\end{equation}
where in the last bound we have also used the boundedness of the map  $s\mapsto |s|\operatorname{sech}^2(|\omega_0||s|/4)$.  Since $h_\eta(t)=\eta z_\eta(\iota\eta t)/2$ and $\iota|\omega_0|/2=\omega_0/2$, the estimate \eqref{cm_scalar_approximation} follows.

\emph{Step 3: Exponential decay.} Combining \eqref{cm_artanh_bounds} with the elementary inequalities $1-\tanh u\le2e^{-2u}$ for $u\ge0$ and $\operatorname{sech}^2u\le4e^{-2|u|}$ for $u \in \mathbb{R}$ shows that
\begin{equation}\label{cm_z_tails}
        |z_\eta(s)-\operatorname{sgn}(s)|\le 2e^{-|\omega_0||s|/2}
        \text{ and }
        1-z_\eta(s)^2\le 4e^{-|\omega_0||s|/2}.
\end{equation}
Set $f_\eta(z)=b_\eta(z)(1-z^2)$. The bound \eqref{cm_b_estimate} shows that $f_\eta,f_\eta',f_\eta''$ are uniformly bounded on $[-1,1]$, while \eqref{cm_z_tails} yields the bound $|f_\eta(z_\eta(s))|\le Ce^{-|\omega_0||s|/2}$. Differentiating \eqref{cm_rescaled_equation} gives $z_\eta'=f_\eta(z_\eta)$, $z_\eta'' =f_\eta'(z_\eta)f_\eta(z_\eta)$, and $z_\eta'''= \left(f_\eta''(z_\eta)f_\eta(z_\eta) +[f_\eta'(z_\eta)]^2\right)f_\eta(z_\eta)$.  Thus, $|z_\eta^{(j)}(s)|\le Ce^{-|\omega_0||s|/2}$ for $j\in\{1,2,3\}$.  Rescaling by $h_\eta(t)=\eta z_\eta(\iota\eta t)/2$ then proves \eqref{cm_scalar_tail_bounds}, with $c=|\omega_0|/2$. 
\end{proof}

We are now ready to prove the main theorem of this section, which constructs solutions to \eqref{cm_height_equation}.  We will then be able to deduce  Theorem \ref{main_small_monotone_bores} as a consequence afterward.

\begin{thm}\label{cm_main_theorem} 
Fix $a,\mu>0$, $\sigma\ge0$, and $g\ge0$ with $g\neq8$, and set $\iota=\operatorname{sgn}(8-g)\in\{-1,1\}$. There exist $A_0\in(0,1)$, constants $C,c>0$, and an open neighborhood $\mathcal{U} \subseteq \mathbb{R}^3$ of $(1/2,0,0)$ such that for every $A\in(A_0,1)$, equation \eqref{cm_height_equation} has a positive solution $H\in C_b^3(\mathbb{R})\cap C^\infty(\mathbb{R})$ satisfying the following. 
\begin{enumerate} 
\item We have $H(0)=1/2$, $\iota H'(t)>0$ for every $t\in\mathbb{R}$, and
\begin{equation}\label{cm_main_limits}
\lim_{t\to\pm\infty}\bigl(H(\iota t),H'(\iota t),H''(\iota t)\bigr)=(H_\pm(A),0,0).
\end{equation}

\item The trajectory $(H,H',H'')$ lies in $\mathcal{U}$.  Any other positive heteroclinic solution of \eqref{cm_height_equation} connecting $H_-$ and $H_+$ with phase trajectory lying entirely in $\mathcal{U}$ is a translate of $H$.  

\item The solution satisfies the uniform approximation
\begin{equation}\label{cm_main_profile}
 \sup_{t\in\mathbb{R}} \left|H(t)-\frac{1}{2}-\frac{(1-A)^{1/2}}{2} \tanh\left(\frac{8a(1-A)^{1/2} t}{8-g}\right)\right|  \le C(1-A).
\end{equation}
Moreover, for $t\in \mathbb{R}$ we have the estimates 
\begin{equation}\label{cm_main_tail}
\left|H(t)-\frac{1+\iota (1-A)^{1/2} \operatorname{sgn}(t)}{2}\right| \le C(1-A)^{1/2} \exp(-c (1-A)^{1/2} |t|),
\end{equation}
and 
\begin{equation}\label{cm_main_derivative_tails}
|H^{(j)}(t)|\le C(1-A)^{(j+1)/2} \exp(-c (1-A)^{1/2} |t|) \text{ for } j \in \{1,2,3\}.
\end{equation}
\end{enumerate} 
\end{thm}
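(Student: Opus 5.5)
For $\sigma>0$ the plan is to combine Lemmas \ref{cm_linearization}, \ref{cm_reduction} and \ref{cm_scalar_lemma}. Set $\delta=1-A$ and $\eta=\delta^{1/2}$. Lemma \ref{cm_reduction} gives the reduced flow $h'=k(h,\delta)(\delta/4-h^2)$ on $\mathcal M_\delta$, valid while $(h,\delta)\in D_0$. We apply Lemma \ref{cm_scalar_lemma} with $U=D_0$ and $\omega=k\in C^3(D_0)$. Then $\omega_0=k(0,0)=16a/(8-g)$, which is nonzero since $g\neq8$, and $\operatorname{sgn}\omega_0=\iota$. This yields $\eta_0$ and, for $0<\eta<\eta_0$, a monotone $h_\eta$ with values in $(-\eta/2,\eta/2)$ connecting $\mp\iota\eta/2$ to $\pm\iota\eta/2$.

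Next, choose $A_0$ so that $\delta=1-A<\min\{\eta_0^2,\delta_0\}$, and so that the lifted curve $x(t)=(h_\eta,P(h_\eta,\delta),Q(h_\eta,\delta))$ lies in a fixed neighborhood $\mathcal O'\subset\mathcal O$. The latter is possible because $P,Q$ are continuous, vanish at $(0,0)$, and $|h_\eta|<\eta/2$. By the last assertion of Lemma \ref{cm_reduction}, $x$ solves \eqref{cm_extended_system}. In particular $x_2=h_\eta'$ and $x_3=h_\eta''$, so $H=1/2+h_\eta$ solves \eqref{cm_height_equation} by Lemma \ref{cm_linearization}, with $H(0)=1/2$ and $\iota H'>0$. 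The endpoints $\pm\iota\eta/2$ give $H\to H_\pm$ at $\pm\iota\infty$, and $H',H''\to0$ follow from the tail bounds \eqref{cm_scalar_tail_bounds}. Smoothness comes from bootstrapping the ODE.

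For the estimates, note $\omega_0\eta/2=8a(1-A)^{1/2}/(8-g)$. Hence \eqref{cm_scalar_approximation} gives \eqref{cm_main_profile} with error $C\eta^2=C(1-A)$. Likewise \eqref{cm_scalar_tail_bounds} gives \eqref{cm_main_tail} and \eqref{cm_main_derivative_tails}, with the $\eta^{j+1}$ scaling for the $j$th derivative. The constants are uniform because those in Lemma \ref{cm_scalar_lemma} are independent of $\eta$.

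For uniqueness, let $\mathcal U=(1/2,0,0)+\mathcal O'$. Suppose $\widetilde H$ is a positive heteroclinic solution with $(\widetilde H,\widetilde H',\widetilde H'')-(1/2,0,0)$ contained in $\mathcal O'$. Item 2 of Lemma \ref{cm_reduction} places this trajectory on $\mathcal M_\delta$. There $\tilde h'=P(\tilde h,\delta)=k(\tilde h,\delta)(\delta/4-\tilde h^2)$, which needs $(\tilde h,\delta)\in D_0$, so $\mathcal O'$ should be shrunk accordingly. Because $\tilde h$ connects $\pm\eta/2$, it takes the value $0$ at some time $t_0$. Uniqueness for the scalar IVP in \eqref{cm_scalar_eta} then forces $\tilde h(t)=h_\eta(t-t_0)$.

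The main obstacle is $\sigma=0$, where the system is second order and Lemma \ref{cm_linearization} does not apply. Here I would repeat the argument with the planar system $x=(h,h')$. Its linearization at $(0,0,\delta=0)$ has eigenvalues $0$ and $-(8-g)/(16\mu)$, and the second is nonzero because $g\neq8$. An analogous center manifold reduction then gives $h'=P(h,\delta)$ with $P(h,4h^2)=0$ and $P_\delta(0,0)=4a/(8-g)$. The same factorization and the rest of the argument go through unchanged. The point needing care is that all neighborhoods and constants remain fixed as $A\to1$, since everything is anchored at the single equilibrium $(0,0)$.
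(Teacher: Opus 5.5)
Your proposal is correct and follows the paper's proof essentially step for step: the center-manifold reduction of Lemma \ref{cm_reduction}, Lemma \ref{cm_scalar_lemma} applied with $\omega=k$ and $\omega_0=16a/(8-g)$, lifting through the graph $(h,P,Q)$ with neighborhoods fixed independently of $A$, and a separate planar reduction (eigenvalues $0$ and $-(8-g)/(16\mu)$, $P_\delta(0,0)=4a/(8-g)$) for $\sigma=0$. The only cosmetic difference is in local uniqueness. You locate a time with $\tilde h(t_0)=0$ and invoke uniqueness for the scalar IVP \eqref{cm_scalar_eta}, whereas the paper matches the full phase point $(1/2,P(0,\eta^2),Q(0,\eta^2))$ and invokes uniqueness for the first-order system; both work once the neighborhood is shrunk so that $(\tilde h,\delta)\in D_0$, as you note.
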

\begin{proof}

We first consider the case $\sigma =0$.  Writing $H=1/2+h$ and $A=1-\delta$, the ODE \eqref{cm_height_equation} is equivalent near $(h,\delta)=(0,0)$ to the two-dimensional system
\begin{equation}\label{cm_zero_capillarity_system}
    h'=v \text{ and }
    v'=\frac{v^2}{H} - \frac{A^2/H-gH^2}{4\mu A}v +\frac{a}{\mu A}\left(\frac\delta4-h^2\right).
\end{equation}
The linearization of this problem at $(h,v,\delta)=(0,0,0)$ has eigenvalues $0$ and $-(8-g)/(16\mu)$. Augmenting by $\delta'=0$ and applying the center manifold theorem as in the proof of Lemma \ref{cm_reduction}, we may produce  a $C^4$ graph $v=P(h,\delta)$ such that the following three conditions hold.  First, we have the identities
\begin{equation}
    P(0,0)=P_h(0,0)=0  \text{ and } P_\delta(0,0)=\frac{4a}{8-g}.
\end{equation}
Second, there exist an open neighborhood $\mathcal{O}$ of $(0,0)$ and $\delta_0>0$ such that every complete trajectory of \eqref{cm_zero_capillarity_system} contained in $\mathcal{O}$, with fixed $|\delta|<\delta_0$, lies on the graph.  Third, the nearby equilibria again satisfy $v=0$ and $\delta=4h^2$, so the division argument used in Lemma \ref{cm_reduction} yields a $C^3$ map $k$, defined in a neighborhood of the origin, such that 
\begin{equation}\label{cm_zero_capillarity_reduction}
    P(h,\delta)=k(h,\delta)\left(\frac\delta4-h^2\right) \text{ and } k(0,0)=\frac{16a}{8-g}.
\end{equation}
Lemma \ref{cm_scalar_lemma} now constructs a profile $h$ for all $A<1$ sufficiently close to unity, such that $H = h+ 1/2$ satisfies  the normalization, monotonicity, limits, and estimates of the first and third items.  Smoothness follows from \eqref{cm_zero_capillarity_system} and a standard bootstrap. 

Now choose a bounded open neighborhood $\mathcal U$ of $(1/2,0,0)$ such that $1/4<H<3/4$ on $\overline{\mathcal U}$ and $(H,v,w)\in\mathcal U$ implies $(H-1/2,v)\in\mathcal O$. Since
\begin{equation}
    H'=P(h,1-A)  \text{ and } H''=P_h(h,1-A)P(h,1-A),
\end{equation}
the constructed trajectory lies in $\mathcal U$ for $A$ sufficiently close to unity. Any other heteroclinic with phase trajectory in $\mathcal U$ lies on the same graph and crosses $H=1/2$ with $H'=P(0,1-A)$. Uniqueness for \eqref{cm_zero_capillarity_system} thus proves uniqueness up to translation.  Choosing $A_0\in(0,1)$ sufficiently close to unity to satisfy all these restrictions completes the proof in the case $\sigma=0$.

We now assume that $\sigma>0$ and divide the remainder of the proof into three steps.

\emph{Step 1: Choice of the neighborhoods.} Let $\mathcal O,D_0,\delta_0,P,Q,k$ be as in Lemma \ref{cm_reduction}. Choose a bounded open neighborhood $\mathcal U$ of $(1/2,0,0)$ and a number $0<\delta_1<\delta_0$ small enough that $1/4<H<3/4$ on $\overline{\mathcal U}$ and
\begin{equation}\label{cm_neighborhood_choice}
    (H,v,w)\in\mathcal U \text{ and } |\delta|<\delta_1
     \Rightarrow (H-1/2,v,w)\in\mathcal O \text{ and } (H-1/2,\delta)\in D_0.
\end{equation}

\emph{Step 2: Construction and estimates.} Set $k_0=k(0,0)=16a/(8-g)\neq0$. Lemma \ref{cm_scalar_lemma}, applied with $\omega=k$, provides a scalar solution $h_\eta$ for every sufficiently small $\eta>0$. Set
\begin{equation}
    H(t)=\frac{1}{2}+h_\eta(t)\text{ and }A=1-\eta^2.
\end{equation}
Restrict $\eta$ so that $\eta^2<\delta_1$ and $(h,\eta^2)\in D_0$ whenever $|h|\le\eta/2$. Since $|h_\eta(t)|<\eta/2$ for every $t\in\mathbb{R}$, Lemma \ref{cm_reduction} lifts this scalar solution to a solution of \eqref{cm_extended_system} with parameter $\delta=\eta^2$, with
\begin{equation}
    H'=P(h_\eta,\eta^2)\text{ and }H''=Q(h_\eta,\eta^2).
\end{equation}
The continuity of $P,Q$ and the identities $P(0,0)=Q(0,0)=0$ show that
\begin{equation}\label{cm_graph_uniform_smallness}
    \sup_{|h|\le\eta/2}\left(|h|+|P(h,\eta^2)|+|Q(h,\eta^2)|\right) \to 0 \text{ as }\eta\to0.
\end{equation}
Further restricting $\eta$ if necessary, we conclude that the entire trajectory $(H,H',H'')$ lies in $\mathcal U$. In particular, $H$ takes values in $(0,1)$, so Lemma \ref{cm_linearization} shows that $H$ solves \eqref{cm_height_equation}. Smoothness follows by bootstrapping \eqref{cm_extended_system}.

Since $\operatorname{sgn}(k_0)=\iota$, the normalization, strict monotonicity, and limits in the first item follow from Lemma \ref{cm_scalar_lemma}, with $H_\pm(A)=(1\pm\eta)/2$. The estimates in the third item follow from the same lemma upon substituting $\eta=(1-A)^{1/2}$ and $k_0/2=8a/(8-g)$. The constants $C,c>0$ in these estimates are independent of sufficiently small $\eta$. In particular, $H\in C_b^3(\mathbb{R})$.

\emph{Step 3: Local uniqueness.} Suppose that $\widetilde H$ is another positive heteroclinic solution with the same $A$, connecting $H_-(A)$ and $H_+(A)$, and with $(\widetilde H,\widetilde H',\widetilde H'')$ contained in $\mathcal U$. By Lemma \ref{cm_linearization} and \eqref{cm_neighborhood_choice}, the trajectory $\widetilde x=(\widetilde H-1/2,\widetilde H',\widetilde H'')$ is a solution of \eqref{cm_extended_system} defined on $\mathbb{R}$ and taking values in $\mathcal O$. Lemma \ref{cm_reduction}, with the fixed parameter $\delta=\eta^2<\delta_1$, therefore shows that $\widetilde x(\mathbb{R})\subseteq\mathcal M_{\eta^2}$. Since $\widetilde H$ connects $H_-(A)<1/2$ and $H_+(A)>1/2$, there exists $t_0\in\mathbb{R}$ such that $\widetilde H(t_0)=1/2$. The graph representation then gives
\begin{equation}\label{cm_uniqueness_phase}
        (\widetilde H(t_0),\widetilde H'(t_0),\widetilde H''(t_0))    =(1/2,P(0,\eta^2),Q(0,\eta^2))
        =(H(0),H'(0),H''(0)).
\end{equation}
Uniqueness for the smooth first order system implies that $\widetilde H(t_0+t)=H(t)$ for every $t\in\mathbb{R}$, proving the second item.

Finally, choose a common $\eta_0\in(0,1)$ satisfying all the restrictions above, including $\eta_0^2<\delta_1$, and set $A_0=1-\eta_0^2$. Every $A\in(A_0,1)$ corresponds to $\eta=(1-A)^{1/2}\in(0,\eta_0)$, so the construction and estimates apply with constants independent of $A$. This proves the theorem.
\end{proof}

Note that the leading order profile identified in the theorem depends on $A$, $a$, and $g$, but is independent of $\mu$ and $\sigma$.  With a bit more work we can refine the profile to see the first contributions of viscosity and capillarity.  We record this now.

\begin{prop}\label{cm_higher_order_profiles}
Fix $a,\mu>0$, $\sigma\ge0$, and $g\ge0$ with $g\ne8$. There exist $A_0\in(0,1)$ and $C>0$ such that for every $A\in(A_0,1)$ the solutions $H_\sigma$ provided by Theorem \ref{cm_main_theorem} with $\sigma \ge 0$ satisfy the following.
\begin{enumerate}
\item We have the refined approximation
\begin{multline}\label{cm_second_order_profile}
    \sup_{t\in\mathbb R}\Bigg|H_\sigma(t)-\frac12
    -\frac{(1-A)^{1/2}}2
       \tanh\left(\frac{8a(1-A)^{1/2}t}{8-g}\right)
    \\
    -(1-A)\left(\frac{g+4}{8-g}
                    +\frac{128a\mu}{(8-g)^2}\right)
       \operatorname{sech}^2\left(\frac{8a(1-A)^{1/2}t}{8-g}\right)
       \log\cosh\left(\frac{8a(1-A)^{1/2}t}{8-g}\right)
       \Bigg|\le C(1-A)^{3/2}.
\end{multline}
\item The difference between the capillary and noncapillary  profiles satisfies the bound
\begin{multline}\label{cm_capillary_height_correction}
    \sup_{t\in\mathbb R}\Bigg|H_\sigma(t)-H_0(t)
    -\frac{64\sigma a^2(1-A)^{3/2}}{(8-g)^3}
       \operatorname{sech}^2\left(\frac{8a(1-A)^{1/2}t}{8-g}\right)
    \\
    {}\times\left[
       3\tanh\left(\frac{8a(1-A)^{1/2}t}{8-g}\right)
       -\frac{16a(1-A)^{1/2}t}{8-g}\right]
       \Bigg|\le C(1-A)^2.
\end{multline}
Consequently, writing $\rho_\sigma=\log H_\sigma$ and $\rho_0=\log H_0$, we have
\begin{multline}\label{cm_capillary_log_correction}
    \sup_{t\in\mathbb R}\Bigg|\rho_\sigma(t)-\rho_0(t)
    -\frac{128\sigma a^2(1-A)^{3/2}}{(8-g)^3}
       \operatorname{sech}^2\left(\frac{8a(1-A)^{1/2}t}{8-g}\right)
    \\
    {}\times\left[
       3\tanh\left(\frac{8a(1-A)^{1/2}t}{8-g}\right)
       -\frac{16a(1-A)^{1/2}t}{8-g}\right]
       \Bigg|\le C(1-A)^2.
\end{multline}
\end{enumerate}
\end{prop}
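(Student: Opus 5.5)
The plan is to refine the scalar reduction of Lemma \ref{cm_reduction} by Taylor expanding the factor $k(h,\delta)$, and then to run a regular perturbation argument on the rescaled ODE from the proof of Lemma \ref{cm_scalar_lemma}.

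\emph{Expanding $k$.} First I would compute the Taylor coefficients of $k$ at $(0,0)$ beyond $k_0=16a/(8-g)$. To do this, substitute $P=k(\delta/4-h^2)$ and $Q=P_hP$ into the second invariance identity \eqref{cm_graph_invariance}, that is, $\sigma Q_hP=\sigma\,\Psi_3(h,P,Q,\delta)$. Every term of this identity contains the factor $\delta/4-h^2$, either through $P$ or through the forcing term $\frac{4a}{(1/2+h)^2}(\delta/4-h^2)$. Dividing by it gives an identity of the form
\begin{equation*}
\sigma\,k\,Q_h=-\frac{4\mu(1-\delta)}{(1/2+h)^2}\,k\,(k_h(\tfrac\delta4-h^2)-2hk)+\frac{4\mu(1-\delta)}{(1/2+h)^3}k^2(\tfrac\delta4-h^2)-\Bigl(\frac{(1-\delta)^2}{(1/2+h)^3}-g\Bigr)k+\frac{4a}{(1/2+h)^2}.
\end{equation*}
I would match powers, using the exponents $h\sim\eta$ and $\delta\sim\eta^2$.
\begin{itemize}
\item At order $0$ this recovers $k_0$.
\item At order $h$ it gives $(8-g)k_1=48k_0-64a+32\mu k_0^2$, where $k=k_0+k_1h+O(h^2+\delta)$. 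The viscous part $32\mu k_0^2$ accounts for the $\frac{128a\mu}{(8-g)^2}$ term in \eqref{cm_second_order_profile}, and the rest accounts for the $\frac{g+4}{8-g}$ term.
\item The left side $\sigma kQ_h$ is $O(h^2+\delta)$, since $Q_h=\partial_h(P_hP)$. Hence capillarity first enters $k$ at quadratic order: $k_\sigma-k_0^{\mathrm{(}\sigma=0)}=\sigma(\beta_1h^2+\beta_2\delta)+O(|h|^3+|h|\delta)$, with $\beta_i$ computed from $Q_h\approx 2k_0^2(3h^2-\delta/4)$ at leading order.
\end{itemize}
This requires $k\in C^3$, which Lemma \ref{cm_reduction} provides. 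If a remainder needs one more derivative, I would invoke the center manifold theorem with larger $k$.

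\emph{First item.} Next I would work in the rescaled variables $z(s)=2\eta^{-1}h(\iota s/\eta)$. There the equation becomes $z'=\tfrac\iota2\bigl(k_0+k_1\eta z/2+O(\eta^2)\bigr)(1-z^2)$. I would write $z=\tanh(|k_0|s/2)+\eta w+\eta^2R$, with $w(0)=0$. The correction $w$ solves a linear equation
\begin{equation*}
w'=-|k_0|\tanh(\cdot)\,w+c\,(1-\tanh^2)\tanh,
\end{equation*}
and an integrating factor $\cosh^2$ gives $w\propto\operatorname{sech}^2\log\cosh$ explicitly. Unscaling produces the $(1-A)$ term in \eqref{cm_second_order_profile}.

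\emph{Second item.} For \eqref{cm_capillary_height_correction} I would compare the two scalar ODEs with the same data $h(0)=0$ and with $k$'s differing by $\sigma(\beta_1h^2+\beta_2\eta^2)$. Their difference $d=z_\sigma-z_0$ satisfies, to leading order $\eta^2$, the same linearized equation, now with source $\propto(\beta_1\tanh^2+\beta_2')\operatorname{sech}^2$. Solving by the same integrating factor gives terms $\operatorname{sech}^2\cdot(\tanh\text{ and }s)$, which is the form $3\tanh-2\tau$ in the statement. The log estimate \eqref{cm_capillary_log_correction} then follows from $\log H_\sigma-\log H_0=(H_\sigma-H_0)/(1/2)+O(\eta^2\,|H_\sigma-H_0|)$.

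\emph{Main obstacle.} I expect the main obstacle to be the uniform-in-$s$ control of the remainders $R$. The linearized operator $w\mapsto w'+|k_0|\tanh(\cdot)w$ has a bounded inverse on bounded functions vanishing at $0$, via $\cosh^2$ weights. However, the sources contain secular factors like $s\operatorname{sech}^2$, and the nonlinear remainder is $O(\eta^2)$ only relative to $1-z^2$. I would handle this by working with $r=\operatorname{arctanh}z$, as in Lemma \ref{cm_scalar_lemma}. There the equation is $r'=b_\eta(\tanh r)$, with $b_\eta$ expanded to second order in $\eta$. The corrections to $r$ grow at most linearly in $s$, and are damped by $\operatorname{sech}^2$ after applying the mean value theorem to $\tanh$. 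This yields the $C\eta^3$ (respectively $C\eta^4$) uniform bounds.
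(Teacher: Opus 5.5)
Your proposal is correct and follows essentially the same route as the paper. The paper also divides the invariance relation by $\delta/4-h^2$ and differentiates once in $h$ at the origin to get the $\sigma$-independent slope $\partial_h k_\sigma(0,0)$. It then subtracts the $\sigma=0$ identity to obtain $k_\sigma-k_0=\frac{2048\sigma a^3}{(8-g)^4}(\delta-12h^2)+O(\delta^{3/2})$ on $|h|\le\sqrt\delta/2$, and transfers this to the profiles through the phase $r_\sigma=\operatorname{arctanh}((2H_\sigma-1)/\sqrt\delta)$, whose errors grow at most linearly in $t$ and are damped by $\operatorname{sech}^2$.

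The differences are minor. The paper integrates the separated identity $\frac{8a\sqrt\delta\,t}{8-g}=\int_0^{r_\sigma}\frac{16a\,ds}{(8-g)k_\sigma}$ rather than solving your $\cosh^2$-weighted linear equation for $w$. Also, in your final log step the error is $O(\eta\,|H_\sigma-H_0|)$, not $O(\eta^2|H_\sigma-H_0|)$. This still suffices for $C(1-A)^2$, since $|H_\sigma-H_0|\le C\eta^3$.
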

\begin{proof}
Write $\delta=1-A$ so that for $\sigma \ge 0$
\begin{equation}\label{P_factorization}
    P_\sigma(h,\delta)  =k_\sigma(h,\delta)\left(\frac\delta4-h^2\right)
    \text{ and } k_\sigma(0,0)=\frac{16a}{8-g},
\end{equation}
where $P_\sigma$ and $k_\sigma$ denote the  functions  provided by Lemma \ref{cm_reduction} when $\sigma>0$ and by \eqref{cm_zero_capillarity_reduction} when $\sigma=0$. 

Since $\delta$ is fixed along each trajectory, the reduced equation $h'=P_\sigma(h,\delta)$ allows us to set $H = h+1/2$ and compute
\begin{equation}
    H''=(\partial_h P_\sigma)P_\sigma    \text{ and }
    H'''=(\partial_h^2 P_\sigma)P_\sigma^2    +(\partial_h P_\sigma)^2P_\sigma.
\end{equation}
Plugging this into \eqref{cm_height_third_derivative} yields
\begin{equation}
    \sigma\left((\partial_h^2 P_\sigma)P_\sigma^2 +(\partial_h P_\sigma)^2P_\sigma\right)
    =-\frac{4\mu A}{H^2}(\partial_h P_\sigma)P_\sigma
      +\frac{4\mu A}{H^3}P_\sigma^2 -\left(\frac{A^2}{H^3}-g\right)P_\sigma
      +\frac{4a}{H^2}\left(\frac{\delta}{4}-h^2\right),
\end{equation}
where $P_\sigma$ and its derivatives are understood to be evaluated at $(h,\delta)$. This identity remains valid when $\sigma=0$.  Next, we plug \eqref{P_factorization} into the previous identity, divide  by $\delta/4-h^2\neq 0$, and collect terms; this reveals that
\begin{multline}\label{cm_reduced_invariance}
    \sigma\Bigg[ \left(k_\sigma^2\partial_h^2k_\sigma +k_\sigma(\partial_h k_\sigma)^2\right)
    \left(\frac\delta4-h^2\right)^2 -8hk_\sigma^2(\partial_h k_\sigma) \left(\frac\delta4-h^2\right)   +k_\sigma^3\left(6h^2-\frac\delta2\right)\Bigg]
    \\
    =-\frac{4\mu A}{H^2}k_\sigma(\partial_h k_\sigma) \left(\frac\delta4-h^2\right) +\frac{8\mu A}{H^2}hk_\sigma^2       +\frac{4\mu A}{H^3}k_\sigma^2 \left(\frac\delta4-h^2\right) -\left(\frac{A^2}{H^3}-g\right)k_\sigma +\frac{4a}{H^2}.
\end{multline}
In deriving this we assumed $\delta/4-h^2 \neq 0$, but it remains valid at equality by continuity. We now use this to compute $\partial_h k_\sigma(0,0)$; indeed, differentiating 
\eqref{cm_reduced_invariance} with respect to $h$  and evaluating at $(h,\delta)=(0,0)$ provides the identity
\begin{equation}
0=32\mu k_\sigma(0,0)^2+48k_\sigma(0,0)      -(8-g)\partial_h k_\sigma(0,0)-64a.
\end{equation}
Combining this with $k_\sigma(0,0)=16a/(8-g)$, we obtain
\begin{equation}
    (8-g)\partial_h k_\sigma(0,0)
    =4(g+4)k_\sigma(0,0)+32\mu k_\sigma(0,0)^2,
\end{equation}
and hence
\begin{equation}
    \partial_h k_\sigma(0,0)
    =\frac{64a(g+4)}{(8-g)^2}
       +\frac{8192\mu a^2}{(8-g)^3}.
\end{equation}
Thus both $k_\sigma(0,0)$ and $\partial_h k_\sigma(0,0)$ are independent of $\sigma$, and Taylor's theorem allows us to estimate 
\begin{equation}\label{cm_reduced_first_correction}
k_\sigma(h,\delta)=\frac{16a}{8-g} +\left(\frac{64a(g+4)}{(8-g)^2} +\frac{8192\mu a^2}{(8-g)^3}\right)h  +O(\delta)
\end{equation}
uniformly for $|h|\le\sqrt{\delta}/2$ since the Taylor remainder is $O(h^2+|\delta|)$.

We next subtract the identity \eqref{cm_reduced_invariance} evaluated at $\sigma=0$ from it evaluated at $\sigma >0$, respecting the left-right ordering.   Since $k_\sigma$ and $k_0$ have the same value and first $h$-derivative at the origin, Taylor's theorem shows that
\begin{equation}
    k_\sigma-k_0=O(\delta)
    \text{ and }
    \partial_h k_\sigma-\partial_h k_0=O(\sqrt\delta)
\end{equation}
uniformly for $|h|\le\sqrt\delta/2$, which then implies that 
\begin{equation}
\begin{aligned}
    \left(\frac\delta4-h^2\right)
       \left(k_\sigma\partial_h k_\sigma -k_0\partial_h k_0\right)
       &=O(\delta^{3/2}),\\
    h(k_\sigma^2-k_0^2)&=O(\delta^{3/2}),\\
    \left(\frac\delta4-h^2\right)(k_\sigma^2-k_0^2)
       &=O(\delta^2), \\
        \left(\frac{A^2}{H^3}-g\right)(k_\sigma-k_0) &=(8-g)(k_\sigma-k_0)+O(\delta^{3/2}).
\end{aligned}
\end{equation}
These estimates account for the differences of the first four terms on the right side of \eqref{cm_reduced_invariance}, and the fifth term involving $4a/H^2$ cancels in the difference.
On the left of \eqref{cm_reduced_invariance}, the first two terms are $O(\delta^2)$ and $O(\delta^{3/2})$, respectively. Since $k_\sigma=16a/(8-g)+O(\sqrt\delta)$, subtraction therefore leaves
\begin{equation}
    \sigma\left(\frac{16a}{8-g}\right)^3
       \left(6h^2-\frac\delta2\right)
       =-(8-g)(k_\sigma-k_0)+O(\delta^{3/2}).
\end{equation}
Solving for the difference then shows that
\begin{equation}\label{cm_reduced_capillary_correction}
    k_\sigma(h,\delta)-k_0(h,\delta) =\frac{2048\sigma a^3}{(8-g)^4}(\delta-12h^2)
       +O(\delta^{3/2})
\end{equation}
uniformly for $|h|\le\sqrt\delta/2$.

To transfer these expansions of $k_\sigma$ to the profiles, we next define
\begin{equation}
    r_\sigma(t)=\operatorname{arctanh} \left(\frac{2H_\sigma(t)-1}{\sqrt\delta}\right) \text{ for } t \in \mathbb{R}.
\end{equation}
The reduced equation implies that $r_\sigma$ solves the IVP
\begin{equation}
    r_\sigma'(t)=\frac{\sqrt\delta}{2}  k_\sigma\left(\frac{\sqrt\delta}{2}\tanh r_\sigma(t),\delta\right)
    \text{ and } r_\sigma(0)=0,
\end{equation}
which may be solved implicitly via the integrated identity
\begin{equation}\label{cm_refined_phase_identity}
    \frac{8a\sqrt\delta\,t}{8-g}
    =\int_0^{r_\sigma(t)}
       \frac{16a}
       {(8-g)k_\sigma(\sqrt\delta\tanh s/2,\delta)}\,ds.
\end{equation}
The integrand is $1+O(\sqrt\delta)$ uniformly in $s$, and hence lies between $1/2$ and $3/2$ for sufficiently small $\delta$. It follows that $r_\sigma(t)$ has the same sign as $t/(8-g)$ and that
\begin{equation}\label{cm_refined_phase_bounds}
    c\sqrt\delta|t|\le |r_\sigma(t)|\le C\sqrt\delta|t|
    \text{ and }
    \left|r_\sigma(t)-\frac{8a\sqrt\delta\,t}{8-g}\right|
       \le C\sqrt\delta\,|r_\sigma(t)|
       \le C\delta|t|.
\end{equation}

Using \eqref{cm_reduced_first_correction} to expand the reciprocal in \eqref{cm_refined_phase_identity} implies that
\begin{equation}
    \frac{16a}{(8-g)k_\sigma(\sqrt\delta\tanh s/2,\delta)}
    =1-\sqrt\delta\left(\frac{2(g+4)}{8-g}  +\frac{256a\mu}{(8-g)^2}\right)\tanh s +O(\delta),
\end{equation}
and integrating this expression shows that
\begin{equation}
    \frac{8a\sqrt\delta\,t}{8-g}     =r_\sigma(t) -\sqrt\delta\left(\frac{2(g+4)}{8-g}+\frac{256a\mu}{(8-g)^2}\right) \log\cosh r_\sigma(t) +O(\delta|r_\sigma(t)|).
\end{equation}
Since the derivative of $\log\cosh$ is bounded by one,
\begin{equation}
    \left|\log\cosh r_\sigma(t) -\log\cosh\left(\frac{8a\sqrt\delta\,t}{8-g}\right)\right|
       \le \left|r_\sigma(t)-\frac{8a\sqrt\delta\,t}{8-g}\right| \le C\delta|t|,
\end{equation}
and upon plugging this into the previous identity we deduce that
\begin{equation}\label{cm_refined_phase_expansion}
    r_\sigma(t)=\frac{8a\sqrt\delta\,t}{8-g} +\sqrt\delta\left(\frac{2(g+4)}{8-g} +\frac{256a\mu}{(8-g)^2}\right) \log\cosh\left(\frac{8a\sqrt\delta\,t}{8-g}\right)
    +O(\delta^{3/2}|t|).
\end{equation}

Next, we use the identity $k_\sigma^{-1}-k_0^{-1}=-(k_\sigma-k_0)/(k_\sigma k_0)$ together with 
 \eqref{cm_reduced_capillary_correction} to obtain the equality 
\begin{equation}
\frac{16a}{(8-g)k_\sigma(\sqrt\delta\tanh s/2,\delta)} -\frac{16a}{(8-g)k_0(\sqrt\delta\tanh s/2,\delta)} = -\frac{128\sigma a^2\delta}{(8-g)^3} (1-3\tanh^2s)+O(\delta^{3/2}),
\end{equation}
uniformly in $s$. Subtracting \eqref{cm_refined_phase_identity} with $\sigma =0$ from it with $\sigma >0$ and using the computation
\begin{equation}
    \int_0^r(1-3\tanh^2s)\,ds=3\tanh r-2r
\end{equation}
then shows that
\begin{equation}
    \int_{r_0(t)}^{r_\sigma(t)}\frac{16a} {(8-g)k_\sigma(\sqrt\delta\tanh s/2,\delta)}\,ds
    =\frac{128\sigma a^2\delta}{(8-g)^3} \bigl(3\tanh r_0(t)-2r_0(t)\bigr)
    +O (\delta^{3/2}|r_0(t)|).
\end{equation}
The integrand on the left is at least $1/2$, while $|3\tanh r-2r|\le5|r|$, and  hence this identity first shows that
\begin{equation}\label{cm_phase_difference_bound}
    |r_\sigma(t)-r_0(t)|  \le C\delta|r_0(t)| \le C\delta^{3/2}|t|.
\end{equation}
On the other hand, we can also replace  the integrand on the left by unity, which introduces  an error bounded by
\begin{equation}
    C\sqrt\delta\,|r_\sigma(t)-r_0(t)| \le C\delta^2|t|.
\end{equation}
Also, the derivative of $3\tanh r-2r$ is bounded by two, so replacing $r_0(t)$ on the right by $8a\sqrt\delta\,t/(8-g)$ introduces another error of this order, by \eqref{cm_refined_phase_bounds}. Combining these observations, we deduce  that
\begin{equation}\label{cm_refined_phase_difference}
r_\sigma(t)-r_0(t) = \frac{128\sigma a^2\delta}{(8-g)^3} \left[3\tanh\left(\frac{8a\sqrt\delta\,t}{8-g}\right) -\frac{16a\sqrt\delta\,t}{8-g}\right]    +O(\delta^2|t|).
\end{equation}

It remains to convert the above estimates into uniform estimates for the profiles.  For sufficiently small $\delta$ it holds that  $r_\sigma(t)$, $r_0(t)$, and $8a\sqrt\delta\,t/(8-g)$ all have the same sign, and every point in the interval containing them has absolute value at least $c\sqrt\delta|t|$.  The first two derivatives of $\tanh$ on this interval are thus bounded by $Ce^{-c\sqrt\delta|t|}$. Taylor's theorem and \eqref{cm_refined_phase_bounds} then allow us to bound 
\begin{equation}
\tanh r_\sigma(t) =\tanh\left(\frac{8a\sqrt\delta\,t}{8-g}\right) + \operatorname{sech}^2\left(\frac{8a\sqrt\delta\,t}{8-g}\right)  \left(r_\sigma(t)-\frac{8a\sqrt\delta\,t}{8-g}\right)  +O\left(\delta^2t^2e^{-c\sqrt\delta|t|}\right).
\end{equation}
Substituting \eqref{cm_refined_phase_expansion} here and multiplying by $\sqrt\delta/2$ proves  \eqref{cm_second_order_profile}  since
\begin{equation}
    H_\sigma(t)=\frac12+\frac{\sqrt\delta}{2}\tanh r_\sigma(t).
\end{equation}
More precisely, the resulting error is bounded by
\begin{equation}
    C\left(\delta^2|t|+\delta^{5/2}t^2\right)e^{-c\sqrt\delta|t|}
       \le C\delta^{3/2}(1+\delta t^2)e^{-c\sqrt\delta|t|}.
\end{equation}

For the difference of the profiles, the fundamental theorem of calculus allows us to compute
\begin{multline}
    \tanh r_\sigma(t)-\tanh r_0(t)
       -\operatorname{sech}^2\left(\frac{8a\sqrt\delta\,t}{8-g}\right)
          \bigl(r_\sigma(t)-r_0(t)\bigr)
    =\int_{r_0(t)}^{r_\sigma(t)}
       \left[ \operatorname{sech}^2u
          -\operatorname{sech}^2\left(\frac{8a\sqrt\delta\,t}{8-g}\right)
       \right]\,du.
\end{multline}
For $u$ lying between $r_0(t)$ and $r_\sigma(t)$, every point between $u$ and $8a\sqrt\delta\,t/(8-g)$ has absolute value at least $c\sqrt\delta|t|$. Since $(\operatorname{sech}^2)'=-2\operatorname{sech}^2\tanh$, the mean value theorem and \eqref{cm_refined_phase_bounds} give
\begin{equation}
\left|\operatorname{sech}^2u -\operatorname{sech}^2\left(\frac{8a\sqrt\delta\,t}{8-g}\right)\right|
    \le Ce^{-c\sqrt\delta|t|}   \left|u-\frac{8a\sqrt\delta\,t}{8-g}\right|
    \le C\delta|t|e^{-c\sqrt\delta|t|}.
\end{equation}
Consequently, \eqref{cm_phase_difference_bound} implies that
\begin{equation}
    \left|\int_{r_0(t)}^{r_\sigma(t)}
       \left[\operatorname{sech}^2u
       -\operatorname{sech}^2\left(\frac{8a\sqrt\delta\,t}{8-g}\right)\right]
       \,du\right|
    \le C\delta|t|e^{-c\sqrt\delta|t|} |r_\sigma(t)-r_0(t)|
    \le C\delta^{5/2}t^2e^{-c\sqrt\delta|t|}.
\end{equation}
Since
\begin{equation}
    H_\sigma(t)-H_0(t) =\frac{\sqrt\delta}{2} \bigl(\tanh r_\sigma(t)-\tanh r_0(t)\bigr),
\end{equation}
the preceding estimates give
\begin{equation}
    H_\sigma(t)-H_0(t) =\frac{\sqrt\delta}{2} \operatorname{sech}^2\left(\frac{8a\sqrt\delta\,t}{8-g}\right) \bigl(r_\sigma(t)-r_0(t)\bigr)
    +O\left(\delta^3t^2e^{-c\sqrt\delta|t|}\right).
\end{equation}
We now substitute \eqref{cm_refined_phase_difference} into this
formula. The $O(\delta^2|t|)$ remainder in the phase difference
contributes at most
$C\delta^{5/2}|t|e^{-c\sqrt\delta|t|}$, so
\begin{multline}
    H_\sigma(t)-H_0(t)
    =\frac{64\sigma a^2\delta^{3/2}}{(8-g)^3}
       \operatorname{sech}^2\left(\frac{8a\sqrt\delta\,t}{8-g}\right)
       \left[ 3\tanh\left(\frac{8a\sqrt\delta\,t}{8-g}\right)
       -\frac{16a\sqrt\delta\,t}{8-g}\right]
    \\
    +O\left(
       \left(\delta^{5/2}|t|+\delta^3t^2\right)
       e^{-c\sqrt\delta|t|}\right).
\end{multline}
The remainder is bounded by $C\delta^2(1+\delta t^2)e^{-c\sqrt\delta|t|}$, which is at most
$C\delta^2$ uniformly on $\mathbb R$. Recalling that $\delta=1-A$, we obtain \eqref{cm_capillary_height_correction}.

Finally, recall that $\rho_\sigma=\log H_\sigma$ and $\rho_0=\log H_0$. Since $H_\sigma,H_0=1/2+O(\sqrt\delta)$ uniformly, the derivative of $\log$ at every point between
$H_\sigma(t)$ and $H_0(t)$ is $2+O(\sqrt\delta)$. The mean value theorem and
\eqref{cm_capillary_height_correction} therefore show that
\begin{equation}
    \left|\rho_\sigma(t)-\rho_0(t) -2\bigl(H_\sigma(t)-H_0(t)\bigr)\right|
    \le C\sqrt\delta\,|H_\sigma(t)-H_0(t)|
    \le C\delta^2
\end{equation}
uniformly on $\mathbb R$. Combining this estimate with \eqref{cm_capillary_height_correction} proves \eqref{cm_capillary_log_correction}.
\end{proof}

We now have all the tools needed to present the 

\begin{proof}[Proof of Theorem \ref{main_small_monotone_bores}]
Let $H$ and $\mathcal U$ be as in Theorem \ref{cm_main_theorem}, and set $\mathcal V=\mathcal U$. Equations \eqref{ss_height_equation} and \eqref{cm_height_equation} are equivalent, so the first three items follow directly from that theorem, after renaming the constants. As in Step 5 of the proof of Theorem \ref{main_small_sigma_persistence_in_C}, repeated differentiation of the smooth first order formulation of the ODE shows that $H\in C_b^\infty(\mathbb{R})$; here we use \eqref{cm_zero_capillarity_system} when $\sigma=0$ and \eqref{cm_extended_system} when $\sigma>0$.

Increasing $A_0$ if necessary, we may apply Proposition \ref{cm_higher_order_profiles} with $H_\sigma=H$ and the $\sigma=0$ profile denoted $H_0$. Estimates \eqref{cm_second_order_profile} and \eqref{cm_capillary_height_correction} then prove the fourth and fifth items, enlarging $C_0$ if necessary.
\end{proof}

%%%%%%%%%%%%%%%%%%%%%%%%%%%%%%%%%%%%%%%%%%%%%%%%%
%%%%%%%%%%%%%%%%%%%%%%%%%%%%%%%%%%%%%%%%%%%%%%%%%
%%%%%%%%%%%%%%%%%%%%%%%%%%%%%%%%%%%%%%%%%%%%%%%%%
\section*{Acknowledgments}
%%%%%%%%%%%%%%%%%%%%%%%%%%%%%%%%%%%%%%%%%%%%%%%%%
%%%%%%%%%%%%%%%%%%%%%%%%%%%%%%%%%%%%%%%%%%%%%%%%%
%%%%%%%%%%%%%%%%%%%%%%%%%%%%%%%%%%%%%%%%%%%%%%%%%
 
The authors acknowledge the use of AI tools in the writing of this paper: they were used for generating the graphics for the figures, checking the calculations, and proofreading the prose.

%%%%%%%%%%%%%%%%%%%%%%%%%%%%%%%%%%%%%%%%%%%%%%%%%%%%%%%%%%%%%%%%%%%%%%%%
%References
%%%%%%%%%%%%%%%%%%%%%%%%%%%%%%%%%%%%%%%%%%%%%%%%%%%%%%%%%%%%%%%%%%%%%%%%

\end{document}